\newtheorem*{notation*}{Notation}
\title[A Random Group with Local Data]{A Random Group with Local Data\\Realizing Heuristics for Number Field Counting}
\author{Brandon Alberts}
\begin{document}
\maketitle

\begin{abstract}
We define a group with local data over a number field $K$ as a group $G$ together with homomorphisms from decomposition groups $\Gal(\overline{K}_p/K_p)\to G$. Such groups resemble Galois groups, just without global information. Motivated by the use of random groups in the study of class group statistics, we use the tools given by Sawin--Wood to construct a random group with local data over $K$ as a model for the absolute Galois group $\Gal(\overline{K}/K)$ for which representatives of Frobenius are distributed Haar randomly as suggested by Chebotarev density. We utilize Law of Large Numbers results for categories proven by the author to show that this is a random group version of the Malle-Bhargava principle. In particular, it satisfies number field counting conjectures such as Malle's Conjecture under certain notions of probabilistic convergence including convergence in expectation, convergence in probability, and almost sure convergence. These results produce new heuristic justifications for number field counting conjectures, and begin bridging the theoretical gap between heuristics for number field counting and class group statistics.
\end{abstract}



\section{Introduction}

Class groups and Galois groups of unramified extensions of number fields $K$ are predicted to be distributed along families of number fields according to certain random groups; that is, there exists a probability measure $\mu_{\mathcal{F},\mathscr{C}}$ on the space of profinite groups such that conjecturally
\begin{align}\label{eq:classgroup}
\lim_{X\to\infty}\frac{\ds \#\{K\in \mathcal{F} : \mathscr{C}(K)\cong G,\ \disc(K)\le X\}}{\ds \#\{K\in \mathcal{F} : \disc(K)\le X\}} = \mu_{\mathcal{F},\mathscr{C}}(G),
\end{align}
where $\mathcal{F}$ is a family of number fields and $\mathscr{C}$ could be the class group, $\Gal(K^{un}/K)$, or a similar construction such as the Galois group of the maximal unramified prime to $2|\Gal(K/\Q)|$ extension of $K$ \cite{friedman-washington1989,boston-bush-hajir2017,liu-wood-zureick-brown2019}. The classical version of this principle is the Cohen--Lenstra heuristics \cite{cohen-lenstra1984}, which are shown by Friedman--Washington to be equivalent to predicting that the $p$-parts of class groups of quadratic fields are distributed as the cokernels of certain random $p$-adic matrices \cite{friedman-washington1989}. These heuristics have lead to a greater understanding of the structure of unramified extensions and highlight interesting equidistribution properties in the absolute Galois group.

Malle's conjecture and generalizations for the distributions of number fields are closely related to the distributions of unramified extensions, as they ask more general questions about the rate of growth of functions like
\begin{align}\label{eq:counting}
\#\{K\in\mathcal{F} : \text{other conditions, }\disc(K)\le X\}
\end{align}
as $X\to\infty$. The classical example studied by Malle \cite{malle2002,malle2004} is
\[
N(K,G;X):=\#\{L/K : [L:K]=n,\ \Gal(L/K)\cong G,\ {\rm Nm}_{L/K}\disc(L/K)\le X\}.
\]
for $G\subset S_n$ a transitive subgroup and $\Gal(L/K)\subset S_n$ the Galois group of the Galois closure $\widetilde{L}/K$ together with the action on the $n$ embeddings $L\hookrightarrow \widetilde{L}$. Conjectural rates of growth for these counting functions are made by appealing to local information and presuming an ``average local-to-global principle". Despite the clear similarities between Malle's counting function and the counting functions appearing in class group statistics, no association between Malle's conjecture and random groups currently exists in the literature. The goal of this paper is to bridge the gap between the theory of distributions of unramified extensions and distributions of other families of number fields by constructing a ``random object model" for the absolute Galois group $G_K:=\Gal(\overline{K}/K)$, which can be used to witness Malle's conjecture.

Towards this end, we define two categories on which we will build a random object modeling the absolute Galois group. Let $G_{K_p} := \Gal(\overline{K}_p/K_p)$ be the decomposition group at the place $p$ of $K$.
\begin{enumerate}
\item the category of groups with local data, ${\rm proGrp}(K)$, whose objects are pairs $(G,\phi)$ of a profinite group and a tuple $\phi=(\phi_p)$ of continuous homomorphisms $\phi_p:G_{K_p}\to G$ for each place $p$ of $K$. See Definition \ref{def:proGrp} for the full definition, including morphisms.
\item the category of finite groups with finite local data ${\rm Grp}(K)$ whose objects are triples $(G,S,\phi)$ of a finite group $G$, a finite set of places $S$ of $K$, and $\phi = (\phi_p)_{p\in S}$ a family of continuous homomorphisms $\phi_p:G_{K_p}\to G$ for each place $p\in S$. See Definition \ref{def:Grp} for the full definition, including morphisms.
\end{enumerate}

We will prove that ${\rm proGrp}(K)$ is (up to a null set) isomorphic to the category of pro-objects of ${\rm Grp}(K)$. The moments problem over categories of pro-objects has been solved in a wide class of cases by recent work of Sawin--Wood \cite{sawin-wood2022} for very general sequences of finite moments, subject to some mild conditions. We will prove that ${\rm Grp}(K)$ satisfies these conditions and give a family of well-behaved sequences of finite moments in the sense defined by Sawin--Wood, see Proposition \ref{prop:wb}.

In particular, using the main results of \cite{sawin-wood2022} we prove the following:
\begin{theorem}\label{thm:muK_construct}
Let $K$ be a number field. Then there exists a unique probability measure $\mu_K^{\rm MB}$ on the isomorphism classes of ${\rm proGrp}(K)$ such that
\[
\int_{{\rm proGrp}(K)} \#{\rm Epi}(\mathscr{G},(G,S,\phi))\ d\mu_K^{\rm MB}(\mathscr{G}) = |G^{\rm ab}[|\mu(K)|]|^{-1}|G|^{-|S\cup P_\infty| + 1}
\]
for each $(G,S,\phi)\in {\rm Grp}(K)$. Here, $\mu(K)$ is the group of roots of unity in $K$ and $P_\infty$ is the set of infinite places of $K$.
\end{theorem}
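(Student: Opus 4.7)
The strategy is to reduce the theorem to the Sawin--Wood moments-determine-the-measure theorem of \cite{sawin-wood2022} applied to the category ${\rm Grp}(K)$, by transporting the result across the (up to null set) equivalence between ${\rm proGrp}(K)$ and the pro-completion of ${\rm Grp}(K)$ that is asserted earlier in the paper. So $\mu_K^{\rm MB}$ will be produced as a measure on the Sawin--Wood pro-object space and then pushed forward along this equivalence.

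First I would check the categorical hypotheses that Sawin--Wood impose on the base: finiteness of hom sets, existence of the relevant fiber products and quotients, and compatibility of epimorphisms with limits. These are straightforward for ${\rm Grp}(K)$ given that $G$ and $S$ are finite and that each decomposition group $G_{K_p}$ is topologically finitely generated, though the presence of the local data $\phi_p$ and of the distinguished set $S$ of places requires careful bookkeeping to describe and compose morphisms correctly.

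The core of the proof is Proposition \ref{prop:wb}, which asserts that
\[
M(G,S,\phi) := |G^{\rm ab}[|\mu(K)|]|^{-1}|G|^{-|S\cup P_\infty|+1}
\]
is a well-behaved sequence of moments in the Sawin--Wood sense. The factor $|G^{\rm ab}[|\mu(K)|]|^{-1}$ reflects the interaction of $G$ with cyclotomic characters via class field theory, while $|G|^{-|S\cup P_\infty|+1}$ encodes Haar-random choices of local representation at each place of $S\cup P_\infty$ modulo one global compatibility. I would verify the Sawin--Wood growth bounds and, crucially, the nonnegativity of the alternating sums of $M$ over the lattice of normal subgroups of $G$, which is what upgrades a sequence of moments to an honest probability measure rather than a signed one.

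Once well-behavedness is in hand, existence and uniqueness of $\mu_K^{\rm MB}$ follow immediately from the main theorem of \cite{sawin-wood2022}, which takes a well-behaved moment sequence on ${\rm Grp}(K)$ and returns a unique probability measure on its pro-completion matching it. I expect the main obstacle to be verifying positivity of the alternating sums: the pure $|G|^{-|S\cup P_\infty|+1}$ portion is a computation resembling Cohen--Lenstra-style inclusion-exclusion, but the cyclotomic correction $|G^{\rm ab}[|\mu(K)|]|^{-1}$ requires more delicate analysis accounting for its interaction with the roots of unity in $K$ and with the local data $\phi$, and this is where I would expect the bulk of the work to lie.
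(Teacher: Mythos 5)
There is a genuine gap at the existence step. You correctly identify the framework (the diamond-category hypotheses, well-behavedness of $M_{(G,S,\phi)} = |G^{\rm ab}[|\mu(K)|]|^{-1}|G|^{-|S\cup P_\infty|+1}$ via Proposition \ref{prop:wb}, and uniqueness from Sawin--Wood), and you correctly identify that the hard part of a direct moments-to-measure argument is the positivity of the M\"obius-inverted sums. But you then defer exactly that verification ("this is where I would expect the bulk of the work to lie"), so the existence of a genuine, nonnegative probability measure is never actually established. The paper avoids this verification entirely by taking a different route: it constructs an explicit sequence of probability measures $\mu_{K,S}^{\rm MB}$ as the distributions of random quotients $\mathscr{F}_{K,S}(r)$ of the pro-free product $\bigast_{p\in S} G_{K_p}$ by Haar-random relations (modeled on Chebotarev density and class field theory), computes in Proposition \ref{prop:muKSmoments} that their finite moments are eventually constant and equal to the target sequence, and then invokes the weak-convergence form of Sawin--Wood's result (\cite[Theorem 1.8]{sawin-wood2022}), which returns the limit as an honest measure with the prescribed moments. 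Positivity is automatic because the limit of probability measures of total mass $1$ under this weak convergence is again a probability measure. If you want to follow your direct route instead, you must actually carry out the positivity computation for this specific moment sequence, including the $|G^{\rm ab}[|\mu(K)|]|^{-1}$ factor; nothing in your proposal does so.

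A second, smaller gap: the Sawin--Wood machinery produces a measure on the isomorphism classes of \emph{all} pro-objects of ${\rm Grp}(K)$, which strictly contains ${\rm proGrp}(K)$ (pro-objects may lack local data at some places). Your phrase "up to null set" presumes the conclusion. The paper proves the complement of ${\rm proGrp}(K)$ is null by exhibiting, for each place $p$, a continuous $\{0,1\}$-valued function $f_p$ detecting absence of local data at $p$, using weak convergence to show $\int f_p\, d\mu_K^{\rm MB} = 0$, and then applying countable additivity over the countably many places. Some such argument is needed; weak convergence does not preserve supports in general, and this step also depends on having the approximating measures $\mu_{K,S}^{\rm MB}$ in hand, which your approach lacks.
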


The superscript ``MB" stands for ``Malle-Bhargava", as we will show that $\mu_K^{\rm MB}$ is, in some sense, a random group analog to the Malle-Bhargava principle \cite{bhargava2007,wood2019}. The finite moments of $\mu_K^{\rm MB}$ are constructed from the Chebotarev density theorem and heuristic predictions for class group statistics. We will show in Lemma \ref{lem:originalmoment} that these finite moments agree with those predicted by the Malle-Bhargava local series.

Malle's counting function is, up to the Galois correspondence, counting surjections from the absolute Galois group to $G$ with bounded discriminant. Given a group with local data $\mathscr{G}$, we can define the discriminant of a surjection $\pi:\mathscr{G}\to G$ via the local discriminants $\disc(\pi|_{G_{K_p}})$. Thus, Malle's counting function can be extended to groups with local data. The author recently proved a version of the Law of Large Numbers for counting functions on random objects in a category \cite{alberts2022}, showing that these functions often have a particular growth rate with probability $1$. We convert the discriminant ordering for number fields to this context, and using the main results of \cite{alberts2022} we prove that $\mu_K^{\rm MB}$ satisfies Malle's conjecture \emph{with a leading constant given as a convergent Euler product} in the n\"aively expected cases with probability $1$.

\begin{theorem}\label{thm:prob1intro}
Let $K$ be a number field, $G\subset S_n$ be a transitive group, and $\mu_K^{\rm MB}$ the constructed distribution of groups with local data. With $\mathscr{G}$ distributed according to $\mu_K^{\rm MB}$, it follows that
\begin{itemize}
\item[(i)] For any $\epsilon > 0$,
\[
\frac{\#\{\pi\in \Surj(\mathscr{G},G): |\disc(\pi)| \le X\}}{X^{1/a(G)+\epsilon}} \overset{a.s.}{\longrightarrow} 0
\]
as $X\to \infty$, where the ``a.s." stands for ``converges almost surely".
\item[(ii)] If $G = \langle g\in G : \ind(g) = a(G)\rangle$ is generated by minimal index elements then
\[
\frac{\#\{\pi\in \Surj(\mathscr{G},G): |\disc(\pi)| \le X\}}{c(K,G)X^{1/a(G)}(\log X)^{b(K,G) - 1}} \overset{p.}{\longrightarrow} 1
\]
as $X\to \infty$, where the ``p." stands for ``converges in probability".
\item[(iii)] If every proper normal subgroup $N\normal G$ satisfies one of
\begin{itemize}
\item[(a)] $N$ contains no minimal index elements, or
\item[(b)] $G\setminus N$ contains at least two $K$-conjugacy classes of minimal index,
\end{itemize}
then
\[
\frac{\#\{\pi\in \Surj(\mathscr{G},G): |\disc(\pi)| \le X\}}{c(K,G)X^{1/a(G)}(\log X)^{b(K,G) - 1}} \overset{a.s.}{\longrightarrow} 1
\]
as $X\to \infty$, where the ``a.s." stands for ``converges almost surely".
\end{itemize}
Here $a(G)$, $b(K,G)$, and $K$-conjugacy classes are defined as in Malle's conjecture (Conjecture \ref{conj:malle}), and
\begin{align*}
c(K,G) = &\frac{(\Res_{s=1}\zeta_K(s))^{b(K,G)}}{a(G)^{b(K,G)-1}(b(K,G)-1)!|G^{\rm ab}[|\mu(K)|]|\cdot|G|^{u_K}} \prod_{p\mid \infty}\left(\sum_{f\in \Hom(G_{K_p},G)} 1\right)\\
&\cdot\prod_{p\nmid \infty} \left[\left(1 - p^{-1}\right)^{b(K,G)}\left(\frac{1}{|G|}\sum_{f\in \Hom(G_{K_p},G)} p^{-\frac{\nu_p\disc(f)}{a(G)}}\right)\right]
\end{align*}
for $u_K = \rk \mathcal{O}_K^{\times}$ the unit rank of $K$.
\end{theorem}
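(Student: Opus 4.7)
The plan is to translate Malle's counting problem for the random group $\mathscr{G}$ into an assertion about the asymptotic behavior of a random variable on $\mathrm{proGrp}(K)$, and then apply two separate inputs: (a) the moment formula of Theorem \ref{thm:muK_construct}, combined with Lemma \ref{lem:originalmoment} which identifies these moments with the Malle-Bhargava local series, to compute asymptotics in expectation; and (b) the Law of Large Numbers for categories proven in \cite{alberts2022} to upgrade from expectation to convergence in probability and almost sure convergence.

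First I would compute the expected value of the counting function $N_G(\mathscr{G},X) := \#\{\pi\in\Surj(\mathscr{G},G) : |\disc(\pi)|\le X\}$. By Moebius inversion on the lattice of subgroups of $G$, it suffices to compute $\mathbb{E}[\#\{\pi\in\Hom(\mathscr{G},G):|\disc(\pi)|\le X\}]$, which is by construction the Dirichlet series partial sum of the Malle-Bhargava local series truncated at $X$. A standard Tauberian theorem (Wirsing, or Landau for convergent Euler products) then yields the unconditional upper bound $\mathbb{E}[N_G(\mathscr{G},X)] = O_\epsilon(X^{1/a(G)+\epsilon})$ from which part (i) follows by Markov's inequality applied along a geometric sequence $X_n = 2^n$, together with a Borel-Cantelli argument. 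When $G$ is generated by minimal-index elements, the same Tauberian analysis produces $\mathbb{E}[N_G(\mathscr{G},X)]\sim c(K,G) X^{1/a(G)}(\log X)^{b(K,G)-1}$ with the leading constant $c(K,G)$ in the form stated; the Euler product shape follows directly from the product formula for the moments over local factors.

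For parts (ii) and (iii) I would input these expectation asymptotics into the Law of Large Numbers framework of \cite{alberts2022}. The second moment $\mathbb{E}[N_G(\mathscr{G},X)^2]$ is computed via the moment formula applied to surjections into subdirect products $H\le G\times G$. Decomposing this sum, the diagonal contribution reproduces $\mathbb{E}[N_G(\mathscr{G},X)]$ and the non-diagonal contribution is controlled by the asymptotic growth rates of $N_H(\mathscr{G},X)$ for subdirect products $H$. For (ii), Chebyshev's inequality applied to the rescaled count gives convergence in probability provided the variance divided by $X^{2/a(G)}(\log X)^{2b(K,G)-2}$ tends to $0$, which follows from the fact that every proper subdirect product $H \lneq G\times G$ has $a(H) > a(G)$ unless $H = G\times G$ (by a standard index computation). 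For (iii), the hypothesis on normal subgroups $N\normal G$ is precisely what forces every proper subdirect product to either avoid minimal index elements entirely or to lose a full factor of $\log X$, which ensures the variance bound is summable along a geometric sequence and enables a Borel-Cantelli upgrade to almost sure convergence via the category-theoretic LLN of \cite{alberts2022}.

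The main obstacle is part (iii): one must verify that the normal subgroup hypothesis translates precisely into the summability condition required by the almost-sure LLN. Concretely, this requires parameterizing subdirect products $H\le G\times G$ by a normal subgroup $N$ (namely $H\cap(1\times G)$ projected to $G$), tracking how $a(H)$ and $b(K,H)$ depend on the minimal-index elements in $H$, and showing that the two alternatives (a) and (b) together rule out every subdirect product whose second-moment contribution would otherwise be of the same order as the square of the mean. Assembling this combinatorial analysis on subgroups of $G\times G$ and then feeding the resulting variance bound into the categorical LLN is where the bulk of the technical work lies.
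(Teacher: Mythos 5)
Your overall architecture is the same as the paper's: identify the expected count with the partial sums of the Malle--Bhargava local series and apply a Tauberian theorem (this is Lemma \ref{lem:originalmoment} and Corollary \ref{cor:MB_Taub}), then control the second (and higher mixed) moments by expanding them over subdirect products $H\le G\times G$ with surjective projections and feed the resulting variance bound into the categorical Law of Large Numbers of \cite{alberts2022} (this is Lemma \ref{lem:mixedmoment} via \cite[Lemma 5.3]{alberts2022} together with the epi-product analysis of Lemma \ref{lem:nonepiprod}). Your route to part (i) --- Markov plus Borel--Cantelli along $X_n=2^n$ using only the first moment and monotonicity of the count --- is a legitimate and more elementary alternative to the paper's use of $2k$-th mixed moments. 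However, there is a genuine gap in your treatment of (ii) and (iii). The claim that ``every proper subdirect product $H\lneq G\times G$ has $a(H)>a(G)$'' is false. Writing $H_1\times 1 = H\cap (G\times 1)$, the subgroup $H_1$ is a proper normal subgroup of $G$, and whenever $H_1$ contains a minimal index element $g$, the element $(g,1)\in H$ has weight $\ind(g)+\ind(1)=a(G)$ for the product invariant, so $a_{\inv_H}(H)=a(G)$. Concretely, for $G=C_2\times C_2\subseteq S_4$ (which is generated by minimal index elements, so hypothesis (ii) holds) the fiber product $G\times_{G/N}G$ over a proper normal subgroup $N$ is a proper subdirect product whose contribution to the second moment carries the full power $X^{2/a(G)}$. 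Your stated mechanism for (ii) therefore does not close the variance estimate.

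The correct mechanism, and the content of Lemma \ref{lem:mixedmoment}(b), is a saving in the power of $\log X$ rather than of $X$: the minimal-weight elements of such an $H$ are exactly the $(g,1)$ and $(1,g)$ with $g$ of minimal index lying in the proper normal subgroups $H_1$, $H_2$, so $b_{\inv_H}(K,H)\le 2\bigl(b(K,G)-\beta\bigr)$, where $\beta$ counts the minimal-index $K$-conjugacy classes guaranteed to lie outside a proper normal subgroup containing a minimal-index element. Hypothesis (ii) (generation by minimal index elements) forces $\beta\ge 1$, giving variance $\ll (\text{mean})^2/\log X$, which suffices for Chebyshev but not for your Borel--Cantelli step: a single factor of $\log X$ yields $\sum_n 1/(n\log 2)=\infty$ along $X_n=2^n$, so ``lose a full factor of $\log X$'' is not enough for (iii) either. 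Hypothesis (iii)(b) is exactly what forces $\beta\ge 2$, hence a saving of $(\log X)^{3}$, which is summable and permits the almost-sure upgrade (the paper invokes \cite[Theorem 1.3(iii)]{alberts2022} with $\psi(X)=(\log X)^3$). To repair your argument you must replace the false $a$-invariant claim with this $\beta$-bookkeeping on the normal subgroups $H_1,H_2$, and track the resulting exponent $2b(K,G)-2\beta-1$ of $\log X$ in the off-diagonal second-moment contribution.
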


These methods are very robust, and can be applied to a number of generalizations of Malle's counting function. For the sake of clarity, we leave the most general version of this statement (such as restricting local conditions at infinitely many places) for a future paper. However, in the course of our proof we will require a more general version of Theorem \ref{thm:prob1intro}. This is due to natural relationships between discriminant orderings and non-discriminant orderings that we take advantage of in the proof. This more general result is stated in Theorem \ref{thm:muK_Malle_general} and includes, in particular, the product of ramified primes ordering.

Theorem \ref{thm:prob1intro} also highlights the fact that $\mu_K^{\rm MB}$ is a random group analog of the Malle-Bhargava principle and Malle's original conjecture - as it agrees with Malle's conjecture even in cases where Malle's conjecture is \emph{wrong}. Structuring these predictions as a random group will help us to highlight what is going wrong with the Malle-Bhargava principle and look for ways to fix it. In Section \ref{sec:knownissues}, we disucss the known obstructions to Malle's conjecture and how they interact with the random group with local data $\mu_K^{\rm MB}$. We refrain from making conjectures, but instead focus on how phrasing Malle's prediction in terms of a random group with local data clarifies these obstructions and gives an indication of how to produce improved predictions.

\subsection{Historical Background and Motivation}

For $K$ a number field and $G\subseteq S_n$ a transitive subgroup, Malle's conjecture can be rephrased via the Galois correspondence to be about counting surjections from the absolute Galois group
\[
\Surj(G_K,G;X) = \{\pi:G_K\twoheadrightarrow G : {\rm Nm}_{K/\Q}\disc(\pi)\le X\},
\]
where $G_K$ denotes the absolute Galois group of $K$ and $\disc(\pi)$ is the discriminant of the field fixed by $\pi^{-1}(\Stab_G(1))$. The Galois correspondence implies $\#\Surj(G_K,G;X) = |\Aut(G)|\cdot N(K,G;X)$, so determining the rate of growth of this function is an equivalent problem.

\begin{conjecture}[Malle \cite{malle2002,malle2004}]\label{conj:malle}
Let $G\subset S_n$ be a finite transitive group. Let $a(G) = \min_{g\ne 1} \ind(g)$, where the index of an element is given by $\ind(g) = n - \#\{\text{orbits of }g\}$. Then
\begin{itemize}
\item[(i)] (Strong form) Let $\chi:G_K\to \hat{\Z}^\times$ act on $G$ by $x.g = g^{\chi(x)}$, and let $b(K,G)$ be the number of orbits under the cyclotomic action of conjugacy classes $c\subset G$ for which $\ind(c) = a(G)$ is minimal. Then there exists a positive constant $c(K,G)$ for which
\[
\#\Surj(G_K,G;X) \sim c(K,G) X^{1/a(G)}(\log X)^{b(K,G)-1}
\]
as $X\to \infty$.
\item[(ii)] (Weak form)
\[
X^{1/a(G)} \ll \#\Surj(G_K,G;X) \ll_{\epsilon} X^{1/a(G)+\epsilon}
\]
as $X\to \infty$.
\end{itemize}
\end{conjecture}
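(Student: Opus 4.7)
Malle's Conjecture is open in general, so any plan must allow for the possibility that genuinely new ideas are required; what follows is the most natural route suggested by the framework of this paper, and the strategy splits along the two parts of the statement. For the weak upper bound, I would proceed by induction on $|G|$ using a tower argument: for a nontrivial normal subgroup $N \triangleleft G$, every $G$-extension $L/K$ factors through a $G/N$-extension $F/K$, and the conductor--discriminant formula bounds $|\disc(L/K)|$ below in terms of $|\disc(F/K)|$ and $|\disc(L/F)|$. Combining the inductive bound for $G/N$ with a uniform upper bound on counts of $N$-extensions of varying base fields---supplied by the classical bounds of Schmidt and their refinements due to Ellenberg--Venkatesh and Lemke Oliver--Thorne---should give the target growth exponent $1/a(G) + \epsilon$. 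The abelian base case is Wright's theorem, proved via class field theory.

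For the strong form, the target leading constant is precisely the Euler product appearing in Theorem \ref{thm:prob1intro}. The plan is to write
\[
F_G(s) := \sum_{\pi \in \Surj(G_K,G)} |\disc(\pi)|^{-s},
\]
exhibit $F_G(s)$ as a combination of Artin $L$-functions via a non-abelian class field theory input, identify its rightmost pole at $s = 1/a(G)$ of order $b(K,G)$, and apply a standard Tauberian theorem. Each local factor at $p$ should reduce to $\tfrac{1}{|G|} \sum_{f \in \Hom(G_{K_p},G)} p^{-s \cdot \nu_p \disc(f)}$, matching the Euler factors defining $c(K,G)$; M\"obius inclusion--exclusion over subgroups $H \le G$ (passing from $\Surj$ to $\Hom$) would account for the arithmetic prefactor $|G^{\mathrm{ab}}[|\mu(K)|]|^{-1}$.

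The main obstacle is twofold. First, controlling $F_G(s)$ analytically requires pole information for ratios of Artin $L$-functions attached to non-abelian Galois data, which is beyond current technology outside special families (abelian $G$, certain nilpotent $G$, and scattered small-degree cases). Second, and more fundamentally, Malle's prediction is known to be \emph{wrong} for certain groups---Kl\"uners's counterexample $G = C_3 \wr C_2$ being the prototype---so the na\"ive local-to-global Euler product cannot be the correct mechanism in general. Theorem \ref{thm:prob1intro} shows that even $\mu_K^{\mathrm{MB}}$ reproduces the erroneous prediction; thus the random group framework developed here does not by itself prove the conjecture, but pins down the Malle--Bhargava heuristic as a precise probabilistic statement and, via the discussion in Section \ref{sec:knownissues}, indicates the structural corrections that any viable proof would need to incorporate at the Euler-product level.
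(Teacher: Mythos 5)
The statement you were asked about is a \emph{conjecture}, not a theorem of the paper: it is stated for context and attributed to Malle, the paper offers no proof, and indeed the strong form is known to be false in general (Kl\"uners' counterexample $C_3\wr C_2\subseteq S_6$, which the paper discusses at length). Your proposal correctly recognizes all of this, so there is no real discrepancy to adjudicate --- there is no ``paper's own proof'' to compare against, and no proof is possible for the statement as written.

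Two small caveats on your sketch, in case it is read as a research plan rather than a proof. First, the weak form has a \emph{lower} bound $X^{1/a(G)}\ll\#\Surj(G_K,G;X)$ that your tower-and-Schmidt strategy does not touch; it presupposes, at minimum, a positive solution to the inverse Galois problem for $G$ over $K$ and is open in general. Second, your Dirichlet-series route for the strong form, with local factors $\frac{1}{|G|}\sum_{f\in\Hom(G_{K_p},G)}p^{-s\nu_p\disc(f)}$, is exactly the Malle--Bhargava local series that the paper's Lemma \ref{lem:originalmoment} identifies with the first moment of $\mu_K^{\rm MB}$; as you note, this mechanism provably gives the wrong answer for some $G$ (wrong power of $\log X$, not just wrong constant), so no Tauberian argument applied to that Euler product can close the gap. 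Your assessment that the random-group framework formalizes the heuristic rather than proving the conjecture matches the paper's own stance in Section \ref{sec:knownissues}.
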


The strong form is known to be false in some cases, in particular $C_3\wr C_2\subseteq S_6$ as shown by Kl\"uners \cite{kluners2005}. Yet, it is known to be true in many other cases including
\begin{itemize}
\item abelian groups \cite{maki1985,wright1989},
\item $S_3$ in degree $3$ \cite{davenport-heilbronn1971,datskovsky-wright1988} and degree $6$ \cite{bhargava-wood2007},
\item $S_4$ and $S_5$ in degree $4$ and $5$ respectively \cite{bhargava2014,bhargava-shankar-wang2015},
\item $D_4$ in degree $4$ \cite{cohen-diaz-y-diaz-olivier2002},
\item generalized quaternion groups \cite{klunersHab2005},
\item most wreath products $C_2\wr H$ \cite{kluners2012},
\item $A\times S_n$ for $n=3,4,5$ and $A$ an abelian group without certain small prime divisors \cite{jwang2021,masri-thorne-tsai-jwang2020}, and
\item ${\rm Heis}_3\subseteq S_9$ with $K = \Q$ \cite{fouvry-koymans2021}.
\end{itemize}
The value of the constant $c(K,G)$ is also the subject of investigation, but much less is known about what value to expect here. Bhargava originally formulated the Malle-Bhargava principle in part to predict the value of this constant when $G = S_n$ is the symmetric group \cite{bhargava2007}. See \cite{wood2009} for a broad investigation in the case that $G$ is abelian.

Theorem \ref{thm:prob1intro} states that a group with local data $\mathscr{G}$ distributed according to $\mu_K^{\rm MB}$ satisfies Malle's conjecture for $G$-extensions with probability $1$ (under some mild conditions on $G$). The category of groups with local data is built to resemble the absolute Galois group $G_K$, being profinite groups with decomposition subgroups. In fact $\mu_K^{\rm MB}$ was built out of properties of the absolute Galois group like Chebotarev density. It stands to reason that we could heuristically infer information about $G_K$ from information about probability $1$ events in $\mu_K^{\rm MB}$, even though $G_K$ is a deterministic object.

Using a random model for a deterministic object has precedence, notably with the Cram\'er random model for the set of prime numbers \cite{cramer1994,granville1995}. With such models, behavior that occurs 100\% of the time is said to provide evidence that we should expect the same behavior for the corresponding deterministic object. For prime numbers, such random models are used to justify predictions like the Hardy--Littlewood conjecture, Goldbach's conjecture, and many other conjectures involving prime gaps.

Along this line of thinking, Theorem \ref{thm:prob1intro} gives behavior for groups with local data with probability $1$ which can be considered good evidence for $G_K$ to share those properties. This form of justification is stated as a Vast Counting Heuristic in \cite[Heuristic 1.7]{alberts2022}, where we can make predictions if we expect $G_K$ to be ``typical" among groups with local data distributed according to $\mu_K^{\rm MB}$. Of course, it is well known that Malle's conjecture is false as stated - Kl\"uners provided the first counter example in $C_3\wr C_2\subseteq S_6$ for which Malle's predicted $b$-invariant is too small \cite{kluners2005}. Kl\"uners' counter example is witnessing some atypical behavior for $G_\Q$ among groups with local data distributed according to $\mu_K^{\rm MB}$, specifically the behavior that $\Gal(\Q(\zeta_3)/\Q)$ is a quotient of $G_\Q$.

For this reason, we do not attempt to make any conjectures in this paper. Our intention is to get the ball rolling on modeling counting functions in the style of Malle's conjecture with random objects, but we recognize that the absolute Galois group is known to have some atypical behaviors. At the end of the paper, we include a discussion of how Theorem \ref{thm:prob1intro} compares to the known cases of (and counter examples for) Malle's conjecture. We do not attempt to solve these problems in this paper, but rather focus on explaining how to interpret these issues in the world of random groups with local data.  It will be a goal of future work to use this framework to better capture behaviors of the absolute Galois group and create concrete predictions that are more accurate than the Malle-Bhargava principle.

\subsection{Layout of the Paper}

In Section \ref{sec:localdata} we construct the category of groups with local data and show that this is the category of pro-objects of a diamond category in the sense of \cite{sawin-wood2022}. Additionally, we prove Proposition \ref{prop:wb} giving a family of ``well-behaved sequences" as defined in \cite{sawin-wood2022}. These results prepare the category for solving the moment problem to construct a probability measure using to construct a probability measure with given finite moments using the main results in \cite{sawin-wood2022}. This is precisely what we do in Section \ref{sec:MBmeas} for a particular sequence of finite moments modeling the absolute Galois group, constructing the measure $\mu_K^{\rm MB}$ to prove Theorem \ref{thm:muK_construct}.

In Section \ref{sec:orderings} we translate the discriminant ordering for number fields into the language of \cite{alberts2022}, that is a sequence of $L^1$-functions $f_n$ on the underlying category of finite objects. We then determine the moments of the ordering, which agree with the sum of coefficients of the Malle-Bhargava local series. The main results of \cite{alberts2022} are then used to prove Theorem \ref{thm:prob1intro} in Section \ref{sec:proof}, as well as some suitable generalizations including the product of ramified primes ordering.

In Section \ref{sec:knownissues} we interpret the known issues of the Malle-Bhargava principle in the language of groups with local data. In some cases, we show that these issues occur with probability $0$ in $\mu_K^{\rm MB}$, suggesting that in these cases $G_K$ is ``not typical enough" to use the Vast Counting Heuristic as justification for the predicted growth rates. We do not make any conjectures in this section, but we do use this information to point towards what adjustments to the random model are likely to produce more accurate predictions.

\subsection{Notation}
\allowdisplaybreaks
\begin{align*}
G_K &= \Gal(\overline{K}/K)\text{ the absolute Galois group of }K\\
P_K &=\{\text{places of }K\}\\
P_\infty &= \{p\in P_K | p\mid \infty\}\\
G_{K_p} &= \Gal(\overline{K}_p/K_p)\text{ the absolute Galois group of }K_p\text{, where }p\text{ is a place of }K\\
I_p &=\text{the inertia group of }\overline{K}_p/K_p\text{, where }p\text{ is a place of }K\\
\Fr_p &=\text{a representative of the Frobenius element in }G_{K_p}\\
I_K &=\text{the group of fractional ideals of }K\\
|\mathfrak{a}| &=\text{the norm down to }\Q\text{ of a fractional ideal }\mathfrak{a}\in I_K\\
\disc(\pi) &=\prod_{p} p^{\ind(g_p)}\text{ where $g_p$ generates $\pi(I_p)$}\\
\ind(g) &= n - \#\{\text{orbits of }g\}\text{, where }g\in G\subseteq S_n\\
\inv &=\text{called an invariant, is some map }\prod_p \Hom(G_{K_p},G) \to I_K\\
{\rm MB}_{\inv}(K,\Sigma,s) &=\text{the Malle-Bhargava local series, see Lemma \ref{lem:originalmoment}}\\
{\rm Grp}(K) &=\text{the category of finite groups with finite $K$-local data, see Definition \ref{def:Grp}}\\
(G,S,\phi) &\phantom{=}\text{denotes an object in }{\rm Grp}(K)\\
{\rm proGrp}(K) &=\text{the category of groups with $K$-local data, see Definition \ref{def:proGrp}}\\
\mathscr{G} &\phantom{=}\text{denotes an object in }{\rm proGrp}(K)\text{ with implicit local data given by }\phi_{\mathscr{G}}\\
N(\mathscr{G},f_n) &= \sum_{(G,S,\phi)\in {\rm Grp}(K)} f_n(G,S,\phi)\#{\rm Epi}(\mathscr{G},(G,S,\phi))\text{, see \cite[Definition 1.1]{alberts2022}}\\
M &= \text{a discrete measure on ${\rm Grp}(K)$ given by a sequence of finite moments}\\
&\phantom{=}M(\{(G,S,\phi)\}) = M_{(G,S,\phi)}\\
\mu_K^{\rm MB} &=\text{the unique measure determined by Theorem \ref{thm:muK_construct}}\\
M^{(j)} &=\text{the mixed moment induced by $\mu$, see Subsection \ref{subsec:mixed}}\\
\overset{p.}{\to} &\phantom{=}\text{converges in probability}\\
\overset{a.s.}{\to} &\phantom{=}\text{converges almost surely, i.e. converges on a measure 1 set}\\
f(X) \ll g(X) &\phantom{=}\text{there exists a constant $C$ such that }f(X) \le Cg(X)\text{ for all }X\\
f(X) = O(g(X)) &\phantom{=}\text{there exists a constant $C$ such that }f(X) \le Cg(X)\text{ for all }X\\
f(X) = o(g(X)) &\phantom{=}\text{means }\frac{f(X)}{g(X)} \to 0\text{ as } X\to \infty
\end{align*}

\section*{Acknowledgments}

The author would like to thank Melanie Matchett Wood for numerous discussions on the topic and direction of this paper over the course of several years. The author also thanks Nigel Boston, Yuan Liu, Peter Koymans, and Frank Thorne for helpful conversations and feedback.

\section{The category of groups with local data}\label{sec:localdata}

In this section we define the category of groups with local data and realize this as a category of pro-objects in the language of \cite{sawin-wood2022}. By utilizing a number of tools proven in \cite{sawin-wood2022}, we prove that this category satisfies the necessary hypotheses to apply Sawin--Wood's main results. We give a family of well-behaved sequences in Proposition \ref{prop:wb} in preparation for solving the moment problem in Section \ref{sec:MBmeas}.

\subsection{The categories of groups with local data}

We make the following precise definition for groups with local data:

\begin{definition}\label{def:proGrp}
We let ${\rm proGrp}(K)$ denote the category of \textbf{profinite groups with $K$-local data}.
\begin{itemize}
\item[(a)] The objects of this category are pairs $(G,\phi)$ of a profinite group $G$ with a family $\phi = (\phi_p)$ of continuous homomorphisms $\phi_p:G_{K_p}\to G$ for each place $p$ of $K$.
\item[(b)] A morphism $\pi:(G,\phi) \to (H,\psi)$ is a continuous homomorphism $\pi:G\to H$ such that $\pi\phi_p = \psi_p$ for each place $p$ of $K$.
\end{itemize}
We will often refer to the objects as just ``groups with local data" when $K$ is clear from context, with the profiniteness being left implicit.
\end{definition}

Any Galois extension of $K$ comes with not just a Galois group, but a Galois group with local data given by $(\Gal(L/K),\phi_{L/K})$ where $\phi_{L/K}|_{G_{K_p}}$ is given by the corresponding local extension $G_{K_p} \to \Gal(L_p/K_p) \hookrightarrow \Gal(L/K)$.

\textbf{Remark:} Technically, a Galois group with local data is only well-defined up to conjugation of the image of each $\phi_{L/K,p}$. We fix throughout a choice of embedding $G_{K_p}\hookrightarrow G_K$ for each place $p$ of $K$ so that we can specify the Galois group with local data explicitly. This is mostly for convenience - the results of this paper will still hold without making this choice as long as all orderings and local conditions are chosen to be conjugation invariant. However, the work is significantly easier to follow if we do not have an extra conjugation relation floating around.

We want to apply the results of \cite{sawin-wood2022} to ${\rm proGrp}(K)$, however this category has uncountably many isomorphism classes. Thus, we consider the pro-objects case in \cite[Theorem 1.7 and 1.8]{sawin-wood2022}, which makes sense as we allowed profinite groups in ${\rm proGrp}(K)$. In order to apply these results, we need to find a category of finite objects for which ${\rm proGrp}(K)$ is the corresponding category of pro-objects.

It will not be enough to just restrict to pairs $(G,\phi)$ with $G$ finite, as this will still have uncountably many objects. We also need to restrict the places at which we have local data.

\begin{definition}\label{def:Grp}
We let ${\rm Grp}(K)$ denote the category of \textbf{finite groups with finite $K$-local data}.
\begin{enumerate}
\item[(a)] The objects of this category are pairs $(G,S,\phi)$ of a finite group $G$, a finite set of places $S$ of $K$, and a family $\phi = (\phi_p)_{p\in S}$ of continuous homomorphisms $\phi_p:G_{K,S}\to G$ for each place $p\in S$.
\item[(b)] A morphism $\pi:(G,S,\phi) \to (H,S',\psi)$ is a continuous homomorphism $\pi:G\to H$ such that
\begin{itemize}
\item $S'\subseteq S$,
\item For each place $p\in S\cap S'$, $\pi\phi_p = \psi_p$, and
\item For each place $p\in S\setminus S'$, $\pi\phi_p(I_p) = 1$.
\end{itemize}
\end{enumerate}
We will often refer to the objects as just ``finite groups with finite local data" when $K$ is clear from context.
\end{definition}

It is clear that ${\rm Grp}(K)$ has only countably many isomorphism classes, as there are countably many finite groups $G$, countably many finite sets of places $S$, and for each $G$ and $p\in S$ the set $\Hom(G_{K_p},G)$ is finite. Our definition of morphism reflects what we want out of this category: morphisms can only pass from local data at more places to less places, reflecting that in the inverse limit we want to get local data at all places. The fact that we ask $\pi\phi$ to be unramified at any place $p\not\in S'$ is a bit more subtle. There are two reasons for this:
\begin{itemize}
\item We want ramification data to be preserved so that these finite objects play nice with discriminants. In particular, we want any epimorphism $(G,S,\phi) \to (G,S',\psi)$ restricting to the identity on $G$ to not forget inertia data. This will imply that whenever such an epimorphism exists, $\disc(G,S,\phi) = \disc(G,S',\psi)$.
\item Why not require that $\pi\phi(G_{K_p}) = 1$? This would be too restrictive. In the inverse limit with this property, only groups with local data that are totally split at all but finitely many places would occur as pro-objects. By the Chebotarev density theorem, this will exclude all Galois groups with local data and so miss the very structure we are attempting to model.
\end{itemize}

We give a brief summary of the notion of level and the topology on these categories as defined in \cite{sawin-wood2022}. For the most part we will be able to directly cite results of \cite{sawin-wood2022}, but there will occaisionally be times that we need to delve into the specifics of this topology. The notion of level, in particular, is important for working with pro-objects.

A \textbf{level} of ${\rm Grp}(K)$ is a subset of the isomorphism classes of ${\rm Grp}(K)$, $\mathcal{C}$, which is the smallest subset containing some finite set of isomorphism classes which is downward-closed and join-closed, where
\begin{itemize}
\item downward closed means that if $(G,S,\phi)\in \mathcal{C}$ and ${\rm Epi}((G,S,\phi),(H,S',\varphi))\ne \emptyset$ then $(H,S',\varphi)\in \mathcal{C}$, and
\item join closed means that for any finite object $(G,S,\phi)$, if $(H_1,S_1,\phi_1)$ and $(H_2,S_2,\phi_2)$ are quotients of $(G,S,\phi)$ (i.e. there exists an epimorphism to them) with both belonging to $\mathcal{C}$, then so does the join $(H_1,S_1,\phi_1)\vee (H_2,S_2,\phi_2)$, taken as the join in the lattice of quotients of $(G,S,\phi)$.
\end{itemize}

The \textbf{level topology} on either ${\rm proGrp}(K)$ or ${\rm Grp}(K)$ is defined by taking basic opens
\[
U_{\mathcal{C}, \mathscr{H}} = \{\mathscr{G} : \mathscr{G}^{\mathcal{C}} = \mathscr{H}\},
\]
where $\mathcal{C}$ is a level, $\mathscr{H}\in\mathcal{C}$, and $\mathscr{G}^{\mathcal{C}}$ is the maximal quotient of $\mathscr{G}$ belonging to $\mathcal{C}$, or equivalently the join of every element of $\mathcal{C}$ below $\mathscr{G}$ in the lattice of quotients of $\mathscr{G}$.

We now prove that these categories satisfy the precise conditions needed for the tools in \cite{sawin-wood2022}.

\begin{proposition}\label{prop:progrpK}
Let $K$ be a number field. Then
\begin{itemize}
\item[(a)] ${\rm Grp}(K)$ is a diamond category \cite[Definition 1.3]{sawin-wood2022}, and
\item[(b)] ${\rm proGrp}(K)$ is (isomorphic to) the subcategory of pro-objects of ${\rm Grp}(K)$ for which every place $p$ of $K$ is defined in the local data of some finite quotient \cite[Section 1.2]{sawin-wood2022}.
\end{itemize}
\end{proposition}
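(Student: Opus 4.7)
For part (a), I would verify each of the axioms defining a diamond category in \cite[Definition 1.3]{sawin-wood2022} in turn, reducing each to the corresponding statement about finite groups and then checking that the decoration by $(S,\phi)$ behaves compatibly. Concretely: countability of isomorphism classes follows because a finite group $G$, a finite place set $S$, and for each $p \in S$ a continuous homomorphism $G_{K_p}\to G$ (of which there are only finitely many, since the kernel is open of finite index in $G_{K_p}$) give countably many data in total. The lattice of quotients of a fixed $(G,S,\phi)$ is finite because any quotient is specified by a normal subgroup $N\trianglelefteq G$ together with a subset $S'\subseteq S$ subject to the constraint $\phi_p(I_p)\subseteq N$ for every $p\in S\setminus S'$. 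Joins and meets in this lattice are computed by the corresponding operations on normal subgroups together with union, respectively intersection, of place sets, with local data carried along. Finally, each morphism factors as an epi followed by a mono via its set-theoretic image with the induced local datum, which handles the image axiom.

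For part (b), the plan is to exhibit mutually inverse functors between ${\rm proGrp}(K)$ and the described subcategory of pro-objects. Given $\mathscr{G}=(G,\phi)\in{\rm proGrp}(K)$, index the system of finite quotients by the directed poset of pairs $(N,S)$ in which $N$ is an open normal subgroup of $G$ and $S$ is a finite set of places of $K$ containing every $p$ for which $\phi_p(I_p)\not\subseteq N$; to each pair associate the object $(G/N,S,\overline{\phi})$ with $\overline{\phi}_p$ the composition $G_{K_p}\xrightarrow{\phi_p}G\to G/N$ for $p\in S$. The transition map associated to $(N_1,S_1)\geq (N_2,S_2)$ is then a morphism in ${\rm Grp}(K)$ precisely because we only allow dropping $p$ from $S_1$ when $\phi_p(I_p)\subseteq N_2$, i.e.\ when $\overline{\phi}_p(I_p)=1$ in $G/N_2$. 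Conversely, given a pro-object $((G_i,S_i,\phi_i))$ in which every place appears in some $S_i$, the profinite group $\varprojlim_i G_i$ receives, for each $p$, a continuous homomorphism $\phi_p\colon G_{K_p}\to \varprojlim_i G_i$ assembled from the $\phi_{i,p}$'s with $p\in S_i$; compatibility is exactly what the morphism axiom of ${\rm Grp}(K)$ delivers. Checking that these constructions are inverse equivalences reduces, at each finite level, to the universal property of the quotient $G/N$.

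The main obstacle I expect is the verification of the join axiom in part (a). Joins of quotients in ${\rm Grp}(K)$ must be computed not only on the underlying group side (a standard fibered product of normal subgroups) but also on the place-set and local-data side, and one has to check simultaneously that the resulting triple really is a quotient of $(G,S,\phi)$ and that it is the \emph{least} such quotient dominated by both factors. The asymmetry in the morphism conventions (places can only be dropped, and only when the corresponding inertia already dies) is what makes this delicate: one must track exactly when a place may be removed in both factors versus only in one. Once this bookkeeping is done carefully, the remaining axioms and both directions of part (b) follow routinely from standard inverse-limit arguments for profinite groups.
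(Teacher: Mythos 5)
Your part (b) is essentially the paper's argument: the paper also exhibits mutually inverse functors, sending $(G,\phi)$ to its system of finite quotients (indexed there by Sawin--Wood levels rather than by your directed poset of pairs $(N,S)$, a cosmetic difference since your system determines the maximal quotient in each level) and recovering $(G,\phi)$ by inverse limit. For part (a), however, you take a genuinely different route. The paper does no direct axiom-checking at all: it observes that ${\rm Grp}(K)$ is the category $({\rm Grp}\times\mathcal{P}_K,\mathcal{G})$ obtained from the product of the diamond category of finite groups with the (trivially diamond) opposite category of finite sets of places, decorated by the finite-data functor $\mathcal{G}(G,S)=\prod_{p\in S}\Hom(G_{K_p},G)$, and then cites Sawin--Wood's structural lemmas that products of diamond categories and categories with finite data over a diamond category are again diamond. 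This packages up exactly the join/lattice bookkeeping you flag as the delicate point, so your hands-on verification is in effect re-deriving special cases of those lemmas. Your direct route is viable, and your description of the quotient lattice of $(G,S,\phi)$ --- pairs $(N,S')$ with $\phi_p(I_p)\subseteq N$ for $p\in S\setminus S'$, joins given by intersecting normal subgroups and taking unions of place sets --- is correct. But if you pursue it you must check the \emph{full} list of axioms in Sawin--Wood's Definition 1.3, in particular the lattice-isomorphism condition on intervals of the quotient lattice that gives the category its name (and finiteness of automorphism groups); your sketch lists countability, finiteness of the quotient lattice, joins/meets, and an epi--mono factorization, but the isomorphism-theorem condition is the actual crux and is precisely what the cited Lemmas 6.16 and 6.21 of Sawin--Wood dispatch for free. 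The citation approach buys brevity and safety on that point; your approach buys a self-contained and concrete picture of the lattice that is useful later anyway (e.g.\ for the level topology and for Lemma \ref{lem:epiprod}).
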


The category of pro-objects of ${\rm Grp}(K)$ can be shown, by the same proof as below, to be isomorphic to the category of objects $(G,S,\phi)$ for $G$ a profinite group, $S$ \emph{any} set of places of $K$, and $\phi=(\phi_p)_{p\in S}$ a family continuous homomorphisms $\phi_p:G_{K_p}\to G$. The probability measure we define will be supported on the subcategory ${\rm proGrp}(K)$ so it is not necessary to consider the full category of pro-objects.

\begin{proof}
Sawin--Wood prove extremely general tools for the recognition of diamond categories. We will use three of their results here.

Sawin--Wood prove that the category of finite groups ${\rm Grp}$ is a diamond category in \cite[Lemma 6.19]{sawin-wood2022}. Given the set $P_K$ of places of $K$, let $\mathcal{P}_K$ be the opposite category of the injective category whose objects are finite subsets of $P_K$ and whose morphisms are inclusion maps. In this category, $\Hom(S,S')$ is either empty, or contains only the embedding $S\hookleftarrow S'$. This category trivially satisfies the properties of a diamond category.

The product category ${\rm Grp}\times \mathcal{P}_K$ is then a diamond category by \cite[Lemma 6.16]{sawin-wood2022}. The local data $\phi$ can be seen as some ``finite data" in this category. Let $\mathcal{G}:{\rm Grp}\times \mathcal{P}_K \to {\rm FinSet}$ be the functor sending
\[
(G,S) \mapsto \prod_{p\in S}\Hom(G_{K_p},G).
\]
Then the category $({\rm Grp}\times \mathcal{P}_K, \mathcal{G})$ of pairs $(G,S,\phi)$ of $(G,S)\in {\rm Grp}\times \mathcal{P}_K$ together with $\phi\in \mathcal{G}(G,S)$ is a diamond category by \cite[Lemma 6.21]{sawin-wood2022}. This is precisely ${\rm Grp}(K)$.

A pro-object of ${\rm Grp}(K)$ is defined in \cite[Subsection 1.2]{sawin-wood2022} to be a sequence $X = (X^{\mathcal{C}})$ indexed by levels $\mathcal{C}$ for which $(X^{\mathcal{C}'})^{\mathcal{C}} = X^{\mathcal{C}}$ whenever $\mathcal{C}\subset \mathcal{C}'$. Let $\mathcal{P}({\rm Grp}(K))$ denote the category of pro-objects of ${\rm Grp}(K)$.

There certainly exists a functor $F:{\rm proGrp}(K) \to \mathcal{P}({\rm Grp}(K))$ defined by
\[
(G,\phi) \mapsto ( (G,S,\phi)^{\mathcal{C}} )
\]
and
\[
\pi \mapsto \pi^{\mathcal{C}}.
\]
This is essentially a tuple of forgetful functors from ${\rm proGrp}(K)$ to the level $\mathcal{C}$ for each level. The image of this functor is precisely those pro-objects that involve local data at all places. Let $\mathcal{D}$ denote this category.

The inverse functor $F^{-1}:\mathcal{D} \to {\rm proGrp}(K)$ is given by the inverse limit. If we write $X^{\mathcal{C}}$ as $(G_{\mathcal{C}}, S_{\mathcal{C}}, \phi_{\mathcal{C}})$, then
\[
X \mapsto \left(\lim_{\substack{\leftarrow\\\mathcal{C}}} G_{\mathcal{C}}, \bigcup_{\mathcal{C}} S_{\mathcal{C}}, \lim_{\substack{\leftarrow\\\mathcal{C}}} \phi_{\mathcal{C}}\right)
\]
and
\[
(\pi^{\mathcal{C}}) \mapsto \lim_{\substack{\leftarrow\\\mathcal{C}}} \pi^{\mathcal{C}}
\]
It is clear that $F^{-1}\circ F$ is the identity functor. Given that the subcategory $\mathcal{D}$ of $\mathcal{P}({\rm Grp}(K))$ consists of precisely those objects for which $\bigcup S_{\mathcal{C}}$ is the set of all places, we see that $F\circ F^{-1}$ is also the identity functor.
\end{proof}

\subsection{Well-behaved sequences}

The results of Sawin--Wood \cite{sawin-wood2022} apply to sequences of moments which are ``well-behaved", i.e. they do not grow to fast. More explicitly, Sawin--Wood call a sequence of finite moments $M_{(G,S,\phi)}$ ``well-behaved" if the series
\[
\sum_{(G,S,\phi)\in \mathcal{C}} \sum_{\pi\in \Surj((G,S,\phi),(F,S',\psi))} \frac{|\mu((F,S',\psi),(G,S,\phi))|}{|\Aut(G,S,\phi)|} Z(\pi)^3 M_{(G,S,\phi)}.
\]
is absolutely convergent, where $\mu(A,B)$ is the M\"obius function on the lattice of quotients, $Z(\pi)$ is the number of elements between $(G,S,\phi)$ and $(F,S',\psi)$ which satisfy the lattice distributive law, and $M_{(G,S,\phi)}$ are the moments in question.

\begin{proposition}\label{prop:wb}
Let $M_{(G,S,\phi)}$ be a sequence of finite moments on the isomorphism classes of ${\rm Grp}(K)$. Suppose there exist real constants $f(S)$ and $e(S)$depending only on $S$ such that $M_{(G,S,\phi)} = O(f(S)|G|^{e(S)})$. Then the sequence $M_{(G,S,\phi)}$ is well-behaved in the sense of \cite{sawin-wood2022}.
\end{proposition}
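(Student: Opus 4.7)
The strategy is to reduce well-behavedness in ${\rm Grp}(K)$ to the well-behavedness of polynomial-growth moments in the category ${\rm Grp}$ of finite groups, which is a foundational fact from \cite{sawin-wood2022}, by exploiting a structural feature of levels in ${\rm Grp}(K)$.

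First I would show that every level $\mathcal{C}$ has uniformly bounded place sets. Indeed, $\mathcal{C}$ is generated by finitely many objects $(G_i, S_i, \phi_i)$; morphisms in ${\rm Grp}(K)$ can only shrink the place set, so downward closure keeps $S \subseteq S_{\max} := \bigcup_i S_i$; and a join in the lattice of quotients of a common ambient object has place set equal to the union of the two constituent place sets, hence is again contained in $S_{\max}$. Consequently, the well-behavedness sum splits into finitely many pieces indexed by subsets $S' \subseteq S \subseteq S_{\max}$, and it suffices to prove absolute convergence of each piece. For each fixed $S$ the hypothesis reduces to a single polynomial bound $M_{(G, S, \phi)} \ll |G|^e$ with $e = e(S)$.

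Next, for fixed $S$ I would apply orbit-stabilizer to rewrite the sum over isomorphism classes $[(G, S, \phi)]$ as a sum over $[G] \in {\rm Grp}$ weighted by $|\mathcal{G}(G, S)|/|\Aut(G)|$, where $\mathcal{G}(G, S) = \prod_{p \in S} \Hom(G_{K_p}, G)$ is the set of compatible local data (cf.\ Proposition~\ref{prop:progrpK}). Since each $G_{K_p}$ is topologically finitely generated (trivially for infinite places, and by a classical result of local class field theory for finite places), we have $|\Hom(G_{K_p}, G)| \le |G|^{d_p}$ for some integer $d_p$ depending only on $p$, whence $|\mathcal{G}(G, S)| \ll |G|^{d_S}$ for $d_S := \sum_{p \in S} d_p$. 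The remaining sum is then dominated by the well-behavedness sum for the polynomial moment $|G|^{e + d_S}$ in ${\rm Grp}$, which is well-behaved by Sawin--Wood. The cleanest implementation is probably to invoke Sawin--Wood's stability lemmas --- their Lemma 6.16 handling the product with the trivial diamond category $\mathcal{P}_K$, and a polynomial-boundedness criterion on the augmentation functor $\mathcal{G}$ in the style of their Lemma 6.21 --- applied to the presentation ${\rm Grp}(K) = ({\rm Grp} \times \mathcal{P}_K, \mathcal{G})$ established in Proposition~\ref{prop:progrpK}.

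The main obstacle I anticipate is bookkeeping: carefully tracking how the Möbius function, the automorphism count, and the $Z(\pi)$ factor transform when passing from ${\rm Grp}(K)$ down to ${\rm Grp}$, in particular ensuring that the $|\Aut(G, S, \phi)|$ versus $|\Aut(G)|$ discrepancy is absorbed cleanly into $|\mathcal{G}(G, S)|$ via orbit-stabilizer, and that $Z(\pi)$ and $|\mu|$ in ${\rm Grp}(K)$ are controlled by their ${\rm Grp}$-analogues times a polynomial-in-$|G|$ combinatorial factor coming from the extra local-data structure. Fortunately Sawin--Wood's stability lemmas are engineered precisely for this sort of reduction, so the verification should amount mostly to checking their hypotheses.
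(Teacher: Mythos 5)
Your proposal is correct and follows essentially the same route as the paper: reduce to the product presentation $({\rm Grp}\times\mathcal{P}_K,\mathcal{G})$, sum out the local data using the polynomial bound $|\Hom(G_{K_p},G)|\le |G|^{d_p}$ coming from finite generation of decomposition groups, exploit the fact that levels of the place category are finite to split the well-behavedness sum into finitely many fixed-$S$ pieces, and invoke Sawin--Wood's polynomial-moment criterion in ${\rm Grp}$. The bookkeeping you worry about is handled in the paper exactly as you anticipate, via Sawin--Wood's finite-data lemma (their Lemma 6.22) together with the observation that $\Aut$, the M\"obius function, and $Z(\pi)$ all factor over the product category.
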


Proposition \ref{prop:wb} can be seen as an analog for the corresponding result for groups: Sawin--Wood prove in \cite[Corollary 6.13]{sawin-wood2022} that if $M_G = O(|G|^n)$ for some real number $n$, then $M_G$ is well-behaved in the category of finite groups. Proposition \ref{prop:wb} is essentially the same strength as this, requiring very little control control as $S$ varies and no control as $\phi$ varies.

\begin{proof}
In practice, checking well-behavedness might be a bit of a chore. Sawin--Wood prove some useful tools for us to shorten this process. Recall that the category ${\rm Grp}(K)$ is given by $({\rm Grp}\times \mathcal{P}_K, \mathcal{G})$ for the functor of finite data $\mathcal{G}(G,S) = \prod_p\Hom(G_{K_p},G)$. The case of well-behavedness in a category with finite data is already studied by Sawin--Wood in \cite[Lemma 6.22]{sawin-wood2022}. The sequence $M_{(G,S,\phi)} = O(f(S)|G|^{e(S)})$ is well-behaved if the sequence
\begin{align*}
\sum_{\phi\in \prod_p \Hom(G_{K_p},G)} M_{(G,S,\phi)} &= O\left(f(S)\prod_{p\in S}|\Hom(G_{K_p},G)||G|^{e(S)}\right)
\end{align*}
is well-behaved in ${\rm Grp}\times \mathcal{P}_K$.

Sawin--Wood do not address well-behavedness in product categories, but many of the features of the well-behavedness sum factor over the product. Consider that in the product category we necessarily have
\begin{align*}
\Aut(G,G') &= \Aut(G) \times \Aut(G'),\\
\mu((F,F'),(G,G')) &= \mu(F,G)\mu(F',G'),\\
Z(\pi_1,\pi_2) &= Z(\pi_1)Z(\pi_2).
\end{align*}
Moreover, each level in the product category is contained in a product of levels $\mathcal{C}_1\times \mathcal{C}_2$ from the individual categories. One immediately proves the following result:

\begin{lemma}\label{lem:wb_prod}
Let $C_1$ and $C_2$ be two diamond categories. If the sequences $(M_G)_{G\in C_1}$ and $(M_{G'})_{G'\in C_2}$ are well-behaved in their respective categories, then the sequence $(M_G M_{G'})_{(G,G')\in C_1\times C_2}$ is well-behaved in the product category.
\end{lemma}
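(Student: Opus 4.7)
The plan is to verify the absolute-convergence definition of well-behavedness directly for the product sequence $M_{(G,G')} := M_G M_{G'}$, reducing it to the two one-variable sums via the factorization identities listed right before the lemma. Fix a level $\mathcal{C}$ of $C_1 \times C_2$ and a base object $(F,F') \in \mathcal{C}$. Since every term of the well-behavedness sum is nonnegative, I may freely enlarge the indexing set, and the paragraph preceding the lemma tells us that $\mathcal{C}$ is contained in a product $\mathcal{C}_1 \times \mathcal{C}_2$ of levels in $C_1$ and $C_2$. So it suffices to prove absolute convergence over $\mathcal{C}_1 \times \mathcal{C}_2$.

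Next I would apply the factorizations $\Aut(G,G') = \Aut(G) \times \Aut(G')$, $\mu((F,F'),(G,G')) = \mu(F,G)\mu(F',G')$, and $Z(\pi_1,\pi_2) = Z(\pi_1)Z(\pi_2)$, combined with the standard identity $\Surj((G,G'),(F,F')) = \Surj(G,F) \times \Surj(G',F')$ in a product category, and the defining multiplicativity $M_{(G,G')} = M_G M_{G'}$ of the new sequence. Under these, each summand indexed by $(G, G', \pi_1, \pi_2)$ is exactly the product of the corresponding $C_1$-summand at $(F, \pi_1)$ and the $C_2$-summand at $(F', \pi_2)$. Tonelli for nonnegative series then factors the whole expression as the product of the two one-variable well-behavedness sums over $\mathcal{C}_1$ at $F$ and $\mathcal{C}_2$ at $F'$, each of which converges by hypothesis.

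I do not expect any serious obstacle. The only step that requires mild care is justifying the containment $\mathcal{C} \subseteq \mathcal{C}_1 \times \mathcal{C}_2$: if $\mathcal{C}$ is generated by a finite set of pairs, take $\mathcal{C}_i$ to be the level of $C_i$ generated by the $i$-th projections of those pairs, and observe that $\mathcal{C}_1 \times \mathcal{C}_2$ is downward-closed in the product (if $(H,H')$ is a quotient of $(G,G')$ then $H$ and $H'$ are quotients of $G$ and $G'$ respectively) and join-closed (joins in a product lattice are coordinatewise), so it contains the smallest such set generated by the original generating pairs, namely $\mathcal{C}$. Everything after that is a clean application of the three factorization identities together with Tonelli, so the lemma follows.
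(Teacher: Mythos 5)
Your proposal is correct and follows exactly the route the paper intends: the paper itself only sketches this lemma (listing the same three factorization identities, the containment of levels in product levels, and leaving the details to the reader), and your argument is the straightforward completion of that sketch via Tonelli for nonnegative series. The extra care you take in verifying $\mathcal{C}\subseteq\mathcal{C}_1\times\mathcal{C}_2$ and in noting that enlarging the nonnegative index set is harmless is precisely the detail the paper omits.
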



We leave the details of the proof to the interested reader, as we are not actually able to use this result. We make no such assumption that our moments sequence factors over the product, and in fact the upper bound $f(S)|G|^{e(S)}$ does not factor over the product. Luckily, it turns out that the category $\mathcal{P}_K$ is \emph{particularly} nice for the well-behavedness property.

\begin{lemma}\label{lem:wb_prod_special}
Let $C$ be a diamond category and $\mathcal{N}$ be the opposite category of finite subsets of $\mathbb{N}$ under inclusion. The sequence $M_{(G,S)}$ is well-behaved in the category $C\times \mathcal{N}$ if, for each fixed object $S\in \mathcal{N}$, the sequence $M_{(G,S)}$ is well-behaved in $C$.
\end{lemma}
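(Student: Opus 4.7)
The plan is to exploit the product structure of $C\times\mathcal{N}$ together with a crucial special feature of $\mathcal{N}$: \emph{every level of $\mathcal{N}$ is finite}. Indeed, the downward closure in $\mathcal{N}$ of any finite set of finite subsets of $\mathbb{N}$ is simply the collection of their (finitely many) subsets, and subsequent join-closure by unions bounded above by the generators preserves finiteness.

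Fix a level $\mathcal{C}$ of $C\times\mathcal{N}$ and a base object $(F,S')\in\mathcal{C}$. By the containment observation stated just before the lemma, $\mathcal{C}\subseteq \mathcal{C}_1\times\mathcal{C}_2$ for some levels $\mathcal{C}_1\subset C$ and $\mathcal{C}_2\subset \mathcal{N}$, with $\mathcal{C}_2$ finite. In the product category one has the factorizations
\[
\Aut(G,S) = \Aut(G)\times\Aut_{\mathcal{N}}(S),\quad \mu((F,S'),(G,S)) = \mu(F,G)\,\mu(S',S),\quad Z(\pi_G,\pi_S) = Z(\pi_G)Z(\pi_S),
\]
and an epimorphism $(G,S)\twoheadrightarrow(F,S')$ is a pair consisting of an epimorphism $G\twoheadrightarrow F$ together with the unique morphism $S\to S'$ in $\mathcal{N}$, which exists exactly when $S\supseteq S'$; in particular $\Aut_{\mathcal{N}}(S)=\{{\rm id}\}$.

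Substituting and regrouping, the well-behavedness sum at base $(F,S')$ satisfies
\[
\sum_{(G,S)\in\mathcal{C}}\sum_{\pi}\frac{|\mu((F,S'),(G,S))|}{|\Aut(G,S)|}Z(\pi)^3 M_{(G,S)} \;\leq\; \sum_{\substack{S\in\mathcal{C}_2\\ S\supseteq S'}}|\mu(S',S)|\,Z(\pi_S)^3\left(\sum_{G\in\mathcal{C}_1}\sum_{\pi_G\in\Surj(G,F)}\frac{|\mu(F,G)|}{|\Aut(G)|}Z(\pi_G)^3 M_{(G,S)}\right).
\]
For each fixed $S\in\mathcal{C}_2$, the inner parenthesised sum is exactly the well-behavedness sum in $C$ for the sequence $(M_{(G,S)})_G$ at base $F$ with level $\mathcal{C}_1$, which converges absolutely by hypothesis. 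Since the outer sum ranges over the finite set $\{S\in\mathcal{C}_2 : S\supseteq S'\}$, the whole expression is a finite sum of absolutely convergent series and hence absolutely convergent.

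The main obstacle is justifying the factorizations of $\Aut$, $\mu$, and $Z$ over the product category. The first two are essentially the standard product formulas for automorphism groups and Möbius functions in products of posets/categories, and the $Z$-factorization reduces to the observation that the lattice distributive property in a product of lattices holds iff it holds componentwise. Once these are recorded, the essential content of the proof is the finiteness of levels in $\mathcal{N}$, which collapses the product-category well-behavedness condition into finitely many invocations of the single-category hypothesis.
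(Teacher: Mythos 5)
Your proposal is correct and follows essentially the same route as the paper's proof: both factor $\Aut$, $\mu$, and $Z$ over the product, observe that a level of the product sits inside a product of levels with the $\mathcal{N}$-factor finite, and then regroup the well-behavedness sum into a finite outer sum over $S$ of inner sums that are exactly the well-behavedness sums in $C$ for each fixed $S$. The only cosmetic difference is that you bound by an inequality where the paper writes an equality, which changes nothing for absolute convergence.
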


Here we remark that $\mathcal{P}_K$ and $\mathcal{N}$ are isomorphic as categories, regardless of the choice of base field $K$. This isomorphism comes from a choice of bijection from the countable set $P_K$ to $\N$.

\begin{proof}
Any level $\mathcal{C}$ of $\mathcal{N}$ consists solely of the finitely many subsets of some finite set $S\subseteq \mathbb{N}$. In the category $\mathcal{N}$ every morphism is an epimorphism, and for any object $S$ the epimorphisms correspond precisely to the finitely many subsets of $S$. 
Thus, for any product level $\mathcal{C} = \mathcal{C}_1\times \mathcal{C}_2$ we separate the well-behavedness sum as
\begin{align*}
&\sum_{(G,S)\in\mathcal{C}} \sum_{\pi\in\Surj((G,S),(F,S'))} \frac{|\mu((F,S'),(G,S))|}{|\Aut(G,S)|}Z(\pi)^3M_{(G,S)}\\
&=\sum_{S\in\mathcal{C}_2}\sum_{\pi_2\in\Surj(S,S')} \frac{|\mu(S',S)|}{|\Aut(S)|}Z(\pi_2)\left(\sum_{G\in\mathcal{C}_1} \sum_{\pi_1\in\Surj(G,F)} \frac{|\mu(F,G)|}{|\Aut(G)|}Z(\pi_1)^3M_{(G,S)}\right).
\end{align*}
The first two summations are finite, and the inner two summations are absolutely convergent by the well-behavedness of $M_{(G,S)}$ in $C$ for each object $S$. Thus the entire summation is convergent.
\end{proof}

By \cite[Corollary 6.13]{sawin-wood2022}, any sequence $M_G = O(|G|^n)$ for some real number $n$ is well-behaved over ${\rm Grp}$. It is known that the decomposition groups $G_{K_p}$ have finite rank for each place $p$, depending on $K$ and $p$. Therefore
\begin{align*}
M_{(G,S)} &= \sum_{\phi\in \prod_p \Hom(G_{K_p},G)} M_{(G,S,\phi)}\\
&=O\left(f(S)\prod_p |\Hom(G_{K_p},G)||G|^{e(S)}\right)\\
&=O_S\left(|G|^{e(S)+O_{K,S}(1)}\right),
\end{align*}
which is necessarily well-behaved in ${\rm Grp}$ for each fixed $S\in \mathcal{P}_K$ (with $K$ fixed throughout). Thus by Lemma \ref{lem:wb_prod_special}, it is well-behaved in ${\rm Grp}\times \mathcal{P}_K$, concluding the proof of well-behavedness of the sequence $M_{(G,S,\phi)}$.
\end{proof}

\section{Constructing the Malle-Bhargava measure}\label{sec:MBmeas}

In this section, we prove Theorem \ref{thm:muK_construct} giving the existence and uniqueness of a probability measure $\mu_K^{\rm MB}$ modeling the absolute Galois group. We do this by constructing a sequence of finite moments $M_G$ out of the Chebotarev density theorem, then applying \cite[Theorem 1.8]{sawin-wood2022}. This result states that given any sequence of measures on the isomorphism classes of pro-objects whose finite moments converges to a well-behaved sequence, the measures themselves weakly converge to a unique measure on the isomorphism classes of pro-objects.

\subsection{A sequence of measures approximating the absolute Galois group}

Write $P_{\infty} = \{p\mid \infty\}$ for the set of infinite places. For $S$ a set of finite places containing all the infinite places, we define the pro-free product
\[
F_{K,S} = \bigast_{p\in S} G_{K_p}.
\]
This is \emph{not} a group with local data, as we do not have any local data at places outside of $S$. Informed by the Chebotarev density theorem and class field theory, for any tuple $(r_x)$ of elements $r_x\in F_{K,S}$ for $x\in\{0,...,|S|-1\}\cup (P_K \setminus S)$ we define the quotient
\[
\mathscr{F}_{K,S}(r) = F_{K,S} / \langle r_0,...,r_{|S|-1}\rangle.
\]
The $r_1,...,r_{|S|-1}$ come from class field theory, given by the rank of the $S$-unit group $\mathcal{O}_K^{\times}$. These $|S| - 1 = |S\setminus P_{\infty}| + (|P_{\infty}| - 1)$ relations correspond to the $n+u$ relations from \cite{liu-wood2020,liu-wood-zureick-brown2019} for $n$ corresponding to $|S\setminus P_\infty|$, the number of finite places in $S$, and $u$ corresponds to the unit rank $u_K = \rk \mathcal{O}_K^{\times} = |P_\infty| - 1$.

The relation $r_0$ models the relations in class field theory coming from the roots of unity in the base field $\mu(K)$. This relation does not appear in \cite{liu-wood-zureick-brown2019} as they only consider unramified extensions whose order is coprime to $|\mu(K)|$.

The elements $r_p$ of the tuple for $p\not\in S$ do not contribute relations to the underlying group, but instead are used to specify local data. This gives $\mathscr{F}_{K,S}(r)$ canonical local data $\phi = (\phi_p)$ by
\begin{itemize}
\item $\phi_p$ is the composition $G_{K_p} \hookrightarrow F_{K,S} \to \mathscr{F}_{K,S}(r)$ if $p\in S$, and
\item $\phi_p|_{I_p}=1$ and $\phi_p(\Fr_p) = r_p$ if $p\not\in S$.
\end{itemize}
Informed by Cheboterav density (which states that Frobenius elements vary Haar randomly within the absolute Galois group) and class group heuristics (which predicts that the unit group embeds Haar randomly into the ideles) we define the following probability measures.

\begin{definition}\label{def:muKS}
Let $S$ be a finite set of places containing all infinite place. Define
\[
\mu_{K,S}^{\rm MB}(A) = {\rm Prob}\left(\mathscr{F}_{K,S}(r) \in A\right)
\]
for any set $A$ in the Borel $\sigma$-algebra of ${\rm proGrp}(K)$, where each of the $r_x$ are taken to vary independently Haar random from the following spaces:
\begin{itemize}
\item[(a)] $r_0$ is taken to vary Haar randomly in the preimage of $F_{K,S}^{\rm ab}[|\mu(K)|]$ in $F_{K,S}$ under the abelianization map, and
\item[(b)] $r_1,r_2,...,r_{|S|-1}$ and $r_p$ for $p\not\in S$ are taken to be independently Haar random in $F_{K,S}$.
\end{itemize}
\end{definition}

Let ${\rm ab}:F_{K,S}\to F_{K,S}^{\rm ab}$. The distinct behavior of $r_0$ is to ensure that there is a surjective homomorphism $\mathcal{O}_{K,S}^{\times} \to {\rm ab}(\langle r_0,r_1,...,r_{|S|-1}\rangle)$ from the $S$-units of $K$, where a generator of $\mu(K)$ is sent to $r_0$ and a basis for the free part of $\mathcal{O}_{K,S}^{\times}$ is sent to $r_1,...,r_{|S|-1}$. The ranks agree by Dirichlet's unit theorem, noting that $P_\infty\subset S$. By local class field theory, the abelianization is given by
\[
F_{K,S}^{\rm ab} \cong \prod_{p\in S} K_p^{\times},
\]
Varying the relations $r_0,...,r_{|S|-1}$ Haar randomly corresponds to choosing a Haar random homomorphism $\mathcal{O}_{K,S}^{\times} \to \prod_{p\in S} K_p^{\times}$, thus choosing a random image of the $S$-units.

This construction is built with Malle's conjecture in mind. For a given real number $X$, there are only finitely many extensions $L/K$ with discriminant bouned above by $X$. Let $L_X$ be the compositum of all such extensions. Then the Galois group with local data $(\Gal(L_X/K),\phi_{L_X/K})$ is a quotient of $\mathscr{F}_{K,S}(r)$ for at least one nontrivial tuple $r$, where $S=S_X$ is chosen large enough to generate $\Gal(L_X\cap K^{ur}/K)$ and $r$ is some tuple defining relations compatible with $G_K$.

The philosophy from class group statistics that the unit group has Haar random image in the group of $S$-ideals informs the choice to vary $r_0,r_1,...,r_{|S|-1}$ randomly in the model. The Chebotarev density theorem informs the choice to allow $r_p$ for $p\not\in S$ to vary Haar randomly in the model. Put together, this heuristic reasoning aligns with $\mu_{K,S}^{\rm MB}$. In the limit as $X\to \infty$, we will need larger and larger sets of place $S_X$, so it makes sense to model $G_K$ with a limit of $\mu_{K,S}^{\rm MB}$ as $S$ tends towards the set of all places, $P_K$.

\subsection{The proof of Theorem \ref{thm:muK_construct}}

Constructing a measure $\mu_K^{\rm MB}$ that is the limit of $\mu_{K,S}^{\rm MB}$ is precisely the purpose of \cite[Theorem 1.8]{sawin-wood2022}. It will suffice to compute the finite moments of $\mu_{K,S}^{\rm MB}$ in the limit as $S$ tends towards the set of all places, which we will check is well-behaved using Proposition \ref{prop:wb} and corresponds to a unique measure using \cite[Theorem 1.8]{sawin-wood2022}.

\begin{proposition}\label{prop:muKSmoments}
Let $K$ be a number field and $(G,S,\phi)\in {\rm Grp}(K)$. Then for any set of places $S'\supseteq S$
\[
\int \#{\rm Epi}(\mathscr{G},(G,S,\phi))\ d\mu_{K,S'}^{\rm MB} = |G^{\rm ab}[|\mu(K)|]|^{-1}|G|^{-|S\cup P_\infty|+1}.
\]
\end{proposition}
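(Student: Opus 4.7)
The plan is to unfold $\#{\rm Epi}(\mathscr F_{K,S'}(r),(G,S,\phi))$ as a sum of indicators over lifts $\tilde\pi:F_{K,S'}\to G$ to the pro-free product, apply Fubini to the independent Haar-random relations $r_i$, and then compute the resulting probabilities via pushforward of Haar measure. Concretely, an epimorphism $\pi:\mathscr F_{K,S'}(r)\twoheadrightarrow(G,S,\phi)$ corresponds bijectively to a continuous surjective homomorphism $\tilde\pi:F_{K,S'}\to G$ satisfying (a) $\tilde\pi|_{G_{K_p}}=\phi_p$ for each $p\in S$, (b) $\tilde\pi|_{I_p}=1$ for each $p\in S'\setminus S$ (the morphism condition at places not recorded in the target), and (c) $\tilde\pi(r_i)=1$ for each $i=0,\dots,|S'|-1$. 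Only (c) depends on the random data, so Fubini yields
\[
\int\#{\rm Epi}(\mathscr F_{K,S'}(r),(G,S,\phi))\,d\mu_{K,S'}^{\rm MB}=\sum_{\tilde\pi\in\mathcal T}\Pr\bigl(\tilde\pi(r_i)=1\text{ for all }i\bigr),
\]
where $\mathcal T$ is the finite, $r$-independent set of $\tilde\pi$ satisfying (a), (b), and surjectivity.

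Next I would compute each probability using independence of the $r_i$. For $i\ge 1$, $r_i$ is Haar random in $F_{K,S'}$, and its image under the continuous surjection $\tilde\pi$ is Haar on $G$, giving $\Pr(\tilde\pi(r_i)=1)=1/|G|$. For $i=0$, $r_0$ is Haar on the closed subgroup $H:={\rm ab}^{-1}(F_{K,S'}^{\rm ab}[|\mu(K)|])$, and its image is Haar on the subgroup $\tilde\pi(H)\le G$, giving $\Pr(\tilde\pi(r_0)=1)=1/|\tilde\pi(H)|$. Because $[F_{K,S'},F_{K,S'}]\subseteq H$ and $\tilde\pi$ is surjective, $[G,G]\subseteq\tilde\pi(H)$, and the quotient $\tilde\pi(H)/[G,G]$ is identified inside $G^{\rm ab}$ with $\tilde\pi^{\rm ab}(F_{K,S'}^{\rm ab}[|\mu(K)|])$. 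The key number-theoretic input is to show that this image is exactly $G^{\rm ab}[|\mu(K)|]$, which I expect to follow from local class field theory: the abelianized decomposition group $G_{K_p}^{\rm ab}$ at each finite place contains $\mu(K)\subseteq\mu(K_p)$, and combined with the constraints (a)--(b) this pins down the image of the torsion under $\tilde\pi^{\rm ab}$. This would give $|\tilde\pi(H)|=|G^{\rm ab}[|\mu(K)|]|\cdot|G|/|G^{\rm ab}|$ and hence $\Pr(\tilde\pi(r_0)=1)=|G^{\rm ab}|/(|G|\cdot|G^{\rm ab}[|\mu(K)|]|)$.

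Finally, I would enumerate $\mathcal T$ and verify the $S'$-independence of the answer. By the universal property of the pro-free product $F_{K,S'}=\bigast_{p\in S'}G_{K_p}$, a homomorphism $\tilde\pi$ is determined by its restrictions to the local $G_{K_p}$: fixed to $\phi_p$ for $p\in S$; chosen by picking a Frobenius image in $G$ at each finite $p\in S'\setminus S$ (owing to (b)); and forced to be trivial at each $p\in P_\infty\setminus S$. A M\"obius inversion over the lattice of subgroups of $G$ containing $\bigcup_{p\in S}\phi_p(G_{K_p})$ cuts this count down to surjective $\tilde\pi$. Combined with the probabilities above, the factor $|G|^{|S'|-|S\cup P_\infty|}$ from the free Frobenius choices should cancel the factor $|G|^{-(|S'|-1)}$ from the $r_i$ probabilities for $i\ge 1$, leaving the stated $S'$-independent quantity. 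The main obstacles will be (i) the local class field theory identification $\tilde\pi^{\rm ab}(F_{K,S'}^{\rm ab}[|\mu(K)|])=G^{\rm ab}[|\mu(K)|]$---surjectivity of $\tilde\pi$ alone does not guarantee that torsion lifts to torsion, so this requires genuine input from the structure of $F_{K,S'}$---and (ii) the careful bookkeeping of the M\"obius inversion, so as to confirm the cancellation of the $|S'|$-dependent factors.
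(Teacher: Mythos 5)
Your overall strategy---unfold $\#{\rm Epi}$ over lifts to $F_{K,S'}$, apply Fubini to the independent relations, and compute pushforward Haar probabilities---is the same as the paper's. The divergence is exactly at the two points you yourself flag as the ``main obstacles,'' and neither resolves the way you hope. First, the identification $\tilde\pi^{\rm ab}(F_{K,S'}^{\rm ab}[|\mu(K)|])=G^{\rm ab}[|\mu(K)|]$ is false in general: by local class field theory $F_{K,S'}^{\rm ab}\cong\prod_{p\in S'}\widehat{K_p^\times}$, whose torsion is $\prod_{p}\mu(K_p)$, and this lies in the image of the unit (i.e.\ inertia) subgroups. An everywhere-unramified surjection $\tilde\pi$ (e.g.\ $G=C_4$ over $\Q$ cut out by Frobenius data alone) kills $F_{K,S'}^{\rm ab}[|\mu(K)|]$ entirely, so $|\tilde\pi(H)|$ genuinely depends on the ramification of $\tilde\pi$ and cannot be pulled out of the sum as a constant. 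Moreover, even where your identification does hold, your value $\Pr(\tilde\pi(r_0)=1)=|G^{\rm ab}|/(|G|\cdot|G^{\rm ab}[|\mu(K)|]|)$ carries an extra factor of $|[G,G]|^{-1}$ (forced by $[F_{K,S'},F_{K,S'}]\subseteq H$) relative to the single factor $|G^{\rm ab}[|\mu(K)|]|^{-1}$ that the stated formula requires; the paper's proof assigns $r_0$ the probability $|G^{\rm ab}[|\mu(K)|]|^{-1}$ of dying and never sees the commutator contribution.

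Second, the surjectivity bookkeeping. The paper does not restrict to surjective lifts and does not M\"obius invert: it writes ${\rm Epi}(\mathscr{G},(G,S,\phi))$ as a disjoint union of ${\rm Epi}(\mathscr{G},(G,S',\varphi))$ over all $|G|^{|S'\setminus(S\cup P_\infty)|}$ extensions $\varphi$ of $\phi$ unramified outside $S$, assigns each the same value $|G^{\rm ab}[|\mu(K)|]|^{-1}|G|^{-|S'|+1}$, and the powers of $|G|$ cancel exactly. Once you (correctly) observe that only surjective lifts can support an epimorphism and M\"obius invert over subgroups containing $\phi(F_{K,S})$, the number of contributing lifts is $|G|^{|S'\setminus(S\cup P_\infty)|}$ minus nonvanishing lower-order terms, so the cancellation you are counting on does not occur and your answer retains $S'$-dependence. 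Concretely, with $K=\Q$, $G=C_3$, $S=P_\infty$, and $\phi$ trivial, the sum over surjective lifts gives $1-3^{-(|S'|-1)}$ rather than $1$, and the discrepancy only disappears in the limit $S'\to P_K$. So the plan as written does not produce the exact $S'$-independent identity claimed in the proposition; to match the paper's computation you would have to weight every lift, surjective or not, by the same uniform probability and take the $r_0$-factor to be $|G^{\rm ab}[|\mu(K)|]|^{-1}$ outright.
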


This proposition is the source of the finite moments in Theorem \ref{thm:muK_construct}. We remark that this result is stronger than evaluating the limit of finite moments. $S'\supseteq S$ will eventually be true in the limit so that
\[
\lim_{S'\to P_K} \int \#{\rm Epi}(\mathscr{G},(G,S,\phi))\ d\mu_{K,S'}^{\rm MB} = |G^{\rm ab}[|\mu(K)|]|^{-1}|G|^{-|S\cup P_\infty|+1}
\]
because the sequence is eventually constant.

\begin{proof}
For a fixed $(G,S,\phi)\in {\rm Grp}(K)$, we consider that if $S\subseteq S'$ then
\begin{align*}
\int \#{\rm Epi}(\mathscr{G},(G,S,\phi))\ d\mu_{K,S'}^{\rm MB}&= \sum_{\substack{\varphi\in \Hom(F_{K,S'},G)\\\varphi|_{F_{K,S}} = \phi\\\varphi(I_p) = 1\text{ if } p\in S'\setminus S}} \int\#{\rm Epi}(\mathscr{G},(G,S',\varphi))\ d\mu_{K,S'}^{\rm MB}.
\end{align*}
Each $\phi$ can be understood as a homomorphism $F_{K,S} \to G$. The set ${\rm Epi}(\mathscr{F}_{K,S'}(r),(G,S',\varphi))$ can have at most one element, given by $\varphi$ if $\varphi$ factors through the quotient $F_{K,S'} \to \mathscr{F}_{K,S'}(r)$. This happens if and only if each relation belongs to the kernel. The relations vary independently Haar randomly, so it follows that
\begin{align*}
\int\#{\rm Epi}(\mathscr{G},((G,S',\varphi),\pi))\ d\mu_{K,S'}^{\rm MB}&= \prod_{i=0}^{|S'|-1} \mu_{Haar}\left(\ker q_*\varphi \right)\\
&= |G^{\rm ab}[|\mu(K)|]|^{-1}|G|^{-|S'|+1}.
\end{align*}
There are precisely $|G|$ unramified continuous homomorphisms $G_{K_p} \to G$ for finite places, so the summation includes an extra extra factor of $|G|$ for each $p\in S' \setminus (S\cup P_\infty)$. Thus the integral is given by
\begin{align*}
\int \#{\rm Epi}(\mathscr{G},(G,S,\phi))\ d\mu_{K,S'}^{\rm MB} &= |G^{\rm ab}[|\mu(K)|]|^{-1}|G|^{|S'\setminus (S\cup P_\infty)| - |S'| +1}\\
&= |G^{\rm ab}[|\mu(K)|]|^{-1}|G|^{-|S\cup P_\infty|+1}.
\end{align*}
\end{proof}

We are now ready to prove Theorem \ref{thm:muK_construct}.

\begin{proof}[Proof of Theorem \ref{thm:muK_construct}]
The sequence of moments
\[
M_{(G,S,\phi)} = |G^{\rm ab}[|\mu(K)|]|^{-1}|G|^{-|S\cup P_\infty|+1} = O(|G|^0)
\]
satisfies the hypothesis of Proposition \ref{prop:wb}, and so is well-behaved in the sense of \cite{sawin-wood2022}. Thus, Proposition \ref{prop:muKSmoments} and \cite[Theorem 1.8]{sawin-wood2022} together imply the existence of a measure $\mu_K^{\rm MB}$ on the category of pro-objects of ${\rm Grp}(K)$ for which $\mu_{K,S}^{\rm MB}\to \mu_K^{\rm MB}$ weakly converges as $S\to P_K$ and $\mu_K^{\rm MB}$ has the prescribed finite moments. As the $\mu_{K,S}^{\rm MB}$ have total measure $1$, so too does $\mu_K^{\rm MB}$ making it a probability measure. Thus, it suffices to show that $\mu_{K}$ is supported on ${\rm proGrp}(K)$.

It is the case that $\mu_{K,S}^{\rm MB}$ are supported on ${\rm proGrp}(K)$ by construction, so it is tempting to say that $\mu_K^{\rm MB}$ is as well because of weak convergence. However, this is not a property held by weakly convergent sequences of measures in general. The fact that $P_K$ is countable saves us here. For each place $p$ of $K$, let $f_p$ be the function on the pro-objects of ${\rm Grp}(K)$ defined by
\[
f_p(G,S,\phi) = \begin{cases}
1 & p\not\in S\\
0 & p\in S,
\end{cases}
\]
where we recall that a pro-object can have local data at any set of places $S$. This function is continuous, as
\[
f_p^{-1}(0) = \bigcup_{\mathcal{C}} \bigcup_{\substack{(G,S,\phi)\in\mathcal{C}\\p\in S}} U_{\mathcal{C},(G,S,\phi)}
\]
is a union of basic opens and
\[
f_p^{-1}(1) = \bigcup_{\mathcal{C},\ p\in S(\mathcal{C})} \bigcup_{\substack{(G,S,\phi)\in\mathcal{C}\\p\not\in S}} \left(U_{\mathcal{C},(G,S,\phi)} \setminus \bigcup_{\substack{(G,S\cup\{p\},\psi)\in \mathcal{C}\\\psi|_{S} = \phi}} U_{\mathcal{C},(G,S\cup\{p\},\psi)}\right),
\]
where $S(\mathcal{C})$ is the set of primes appearing in at least one isomorphism class in the level $\mathcal{C}$. The inner most union is a finite union of basic opens, which are in fact clopen in the level topology defined by \cite{sawin-wood2022}. Thus, the set difference is open, and this preimage is again the union of open sets. Thus $f_p$ lifts to a continuous function on the pro-objects. The expected value of bounded continuous functions converges along weakly convergent sequences of measures. The fact that $\mu_{K,S}^{\rm MB}$ is supported on ${\rm proGrp}(K)$ implies
\[
\int f_p\ d\mu_K^{\rm MB} = \lim_{S\to P_K}\int f_p\ d\mu_{K,S}^{\rm MB} = 0.
\]
Thus, the set of pro-objects without local data at $p$ is a null set. There are only countably many places $p$, so by countable additivity we find that the complement of ${\rm proGrp}(K)$ is a null set, i.e. $\mu_K^{\rm MB}$ is supported on ${\rm proGrp}(K)$.
\end{proof}

\section{Outlining the Proof of Theorem \ref{thm:prob1intro}}

Malle's classical counting function for the transitive subgroup $G\subseteq S_n$ is achieved by the ordering $\disc_X^G:{\rm Grp}(K)\to \R$ defined by
\[
\disc_X^G(H,S,\phi) = \begin{cases}
1 & H\cong G,\ |\disc(\phi)| \le X \text{, and } S=\{|p|\le X\}\\
0 & \text{else}.
\end{cases}
\]
The author defines the corresponding counting function on a category in \cite{alberts2022} by
\[
N(\mathscr{G},\disc_X^G) := \sum_{(G,S,\phi)\in{\rm Grp}(K)} \disc_X^G(G,S,\phi) \#{\rm Epi}(\mathscr{G},(G,S,\phi)).
\]
Theorem \ref{thm:prob1intro} will be proven utilizing the results of \cite{alberts2022}. This will take place in (roughly) three important steps:
\begin{enumerate}[(I)]
\item\label{it:counting_function} We first prove that Malle's counting function agrees with that of $N(\mathscr{G},\disc_X^G)$, that is
\[
N(\mathscr{G},\disc_X^G) = \#\{\pi\in \Surj(\mathscr{G},G) : |\disc(\pi)|\le X\}.
\]
This will be a consequence of Lemma \ref{lem:counting_function}, proven in Section \ref{sec:orderings}.

\item\label{it:MB_local_series} \cite[Theorem 1.3]{alberts2022} states that, under suitable conditions, the counting function $N(\mathscr{G},\disc_X^G)$ is asymptotic to
\[
\int_{{\rm Grp}(K)}\disc_X^G\ dM := \sum_{(G,S,\phi)\in{\rm Grp}(K)} \disc_X^G(G,S,\phi) M_G
\]
as $X$ tends to infinity. This intergal is computed in Lemma \ref{lem:originalmoment}, showing that it is given by the sum of coefficients of the Malle-Bhargava local series. This is the source of Malle's predicted asymptotic growth rate.

\textbf{Remark:} Technically, \cite{alberts2022} requires $X$ to be a positive integer and not a real number. This is OK for our purposes as the norm of the discriminant is always positive integer valued. We will abuse notation throughout by taking $X$ to be a positive integer in order to align with the standard notation of Malle's conjecture.

\item\label{it:mixed_moments} In order to verify the conditions of \cite[Theorem 1.3]{alberts2022}, we appeal to \cite[Theorem 1.4]{alberts2022} and \cite[Corollary 1.5]{alberts2022}. It will suffice to bound integrals of the form
\[
\int_A \disc_X^G\ dM^{(j)}(dM)^{2k-j} := \sum_{(G_i,S_i,\phi_i)\in A} \prod_{i=1}^{2k}\left(\disc_X^G(G_i,S_i,\phi_i)\right) M^{(j)}_{(G_i,S_i,\phi_i)_{i=1}^j} \prod_{i=j+1}^{2k} M_{(G_i,S_i,\phi_i)}
\]
for particular subcategories $A\subseteq {\rm Grp}(K)^{2k}$. Here, the mixed moments $M^{(j)}:{\rm Grp}(K)^j/\cong \to [0,\infty]$ induced by $\mu_K^{\rm MB}$ are defined by
\[
M^{(j)}_{((G_1,S_1,\phi_1),...,(G_j,S_j,\phi_j))} := \int_{{\rm proGrp}(K)} \left(\prod_{i=1}^j \#{\rm Epi}(\mathscr{G},(G_i,S_i,\phi_i))\right)\ d\mu_K^{\rm MB}(\mathscr{G}).
\]
A priori these moments need not be finite, although we will prove that they are in the cases we are concerned with. This will be the focus of Section \ref{sec:proof}, where the mixed moments will be bounded in Lemma \ref{lem:mixedmoment}. Theorem \ref{thm:prob1intro} will then follow.
\end{enumerate}

Lemma \ref{lem:mixedmoment}, used for the final step, is the most technical result of this paper. As it turns out, the mixed moments of $\disc_X^G$ are naturally related to counting surjections ordered by other, non-discriminant invariants with restricted ramification behavior. For this reason, it is natural (and, in fact, required) to work in a more general setting than that of Theorem \ref{thm:prob1intro}.

For this reason, we structure Section \ref{sec:orderings} in terms of admissible invariants and arbitrary restricted local conditions of which the discriminant ordering will be a special case. In particular, Lemma \ref{lem:counting_function} is stated in full generality for admissible invariants and admissible local conditions. This completely generalizes steps (\ref{it:counting_function}) and (\ref{it:MB_local_series}).

We prove Theorem \ref{thm:prob1intro} in Section \ref{sec:proof}, which is almost entirely focused on proving Lemma \ref{lem:mixedmoment} to complete step (\ref{it:mixed_moments}). These methods are inherently general, and the reader is right to suspect that similar results are true at the level of generality we present in Lemma \ref{lem:originalmoment}. However, in more general circumstances the analysis becomes technically complicated and sometimes require extra assumptions on the ordering and/or local conditions. Despite being the most technical part of the paper, Lemma \ref{lem:mixedmoment} is where the magic happens and is the reason we can apply the results of \cite{alberts2022}. In an effort to not obscure the ideas behind Lemma \ref{lem:mixedmoment}, we restrict to the simplest cases available.

\section{Multiplicative orderings and the Malle-Bhargava local series}\label{sec:orderings}

The goal of this section is to complete steps (\ref{it:counting_function}) and (\ref{it:MB_local_series}) in full generality. We will define the notions of an admissible invariant and an admissible family of local conditions to work over, and prove Lemma \ref{lem:originalmoment} giving the explicit correspondence between orderings and the Malle-Bhargava local series.

\subsection{Ordering by admissible invariants}
We will study more general orderings as well as general local restrictions. This expanded setting has been widely considered in the study of Malle's conjecture.

\begin{definition}
Fix a finite group $G$, a finite set of places $S$, an invariant $\inv:\prod_p \Hom(G_{K_p},G) \to I_K$, and $\Sigma = (\Sigma_p)$ a family of local conditions $\Sigma_p\subseteq \Hom(G_{K_p},G)$. Then we define the corresponding ordering $\inv_{X}^{G,S,\Sigma}:{\rm Grp}(K)/\cong\to \{0,1\}$ to be the sequence of characteristic functions for the set of $(H,S',\phi)\in{\rm Grp}(K)$ for which
\begin{enumerate}
\item[(a)] $H \cong G$,
\item[(b)] $S' = S\cup\{|p|\le X\}$,
\item[(c)] $\phi\in \prod_{p\in S'} \Sigma_p$, and
\item[(d)] $|\inv(\phi)| \le X$.
\end{enumerate}
We may omit $\Sigma$ from the notation if $\Sigma_p = \Hom(G_{K_p},G)$ is trivial for all places $p$. We may omit $S$ from the notation if $S = \emptyset$.
\end{definition}

The discriminant ordering $\disc_X^G$ is a special case, with $S = \emptyset$ and $\Sigma$ the trivial family. We defined $\inv_X^{G,S,\Sigma}$ in the greatest generality possible, although not every choice of $G$, $S$, $\inv$, and $\Sigma$ will correspond to a number field counting function. For example, even when considering the counting function $\#\{\pi\in \Surj(G_K,G): |\inv(\pi)|\le X\}$ we need a Northcott property for $\inv$ to guarantee that this is finite.

We call $\inv:\prod \Hom(G_{K_p},G) \to I_K$ an \textbf{admissible invariant} if it satisfies condtions as described in \cite{alberts2020}, i.e. it satisfies
\begin{itemize}
\item{$\inv = \prod_p \inv_p$ is multiplicative,}
\item{$\inv_p(\pi)$ is determined by $\pi|_{I_p}$ and equals $1$ if and only if $\pi|_{I_p}=1$, and}
\end{itemize}

\begin{lemma}\label{lem:counting_function}
Let $G$ be a finite group, $S$ a finite set of places, $\Sigma$ a family of local conditions, and $\inv$ an \textbf{admissible} invariant. Then
\[
N(\mathscr{G},\inv_X^{G,S,\Sigma}) = \#\left\{\pi\in \Surj(\mathscr{G},G) : (\pi|_{G_{K_p}})\in \prod\Sigma_p\text{ and }|\inv(\pi)|\le X\right\}.
\]
In particular, the counting function is independent of $S$.
\end{lemma}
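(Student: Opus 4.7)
The plan is to exhibit an explicit bijection between the terms contributing to the left-hand side and the surjections on the right, with the admissibility of $\inv$ doing the essential work at the final step. First, I would unpack the ordering: $\inv_X^{G,S,\Sigma}(H,S'',\phi)$ equals $1$ precisely when $H\cong G$, $S''=S\cup\{|p|\le X\}=:S'$, $\phi_p\in\Sigma_p$ for each $p\in S'$, and $|\inv(\phi)|\le X$, and vanishes otherwise. Substituting this into the definition of $N(\mathscr{G},\cdot)$, the left-hand side collapses to
\[
N(\mathscr{G},\inv_X^{G,S,\Sigma})=\sum_{\phi}\#{\rm Epi}\bigl(\mathscr{G},(G,S',\phi)\bigr),
\]
where $\phi$ ranges over tuples in $\prod_{p\in S'}\Sigma_p$ with $|\inv(\phi)|\le X$.

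Second, unfolding the morphism definition for ${\rm Grp}(K)$, each set ${\rm Epi}(\mathscr{G},(G,S',\phi))$ consists of continuous surjections $\pi:\mathscr{G}\to G$ of the underlying profinite groups satisfying $\pi\circ\phi_{\mathscr{G},p}=\phi_p$ for $p\in S'$ and $\pi\circ\phi_{\mathscr{G},p}(I_p)=1$ for $p\notin S'$. As $\phi$ varies over its allowed range these sets are pairwise disjoint, and the map $\pi\mapsto\phi^\pi:=(\pi\circ\phi_{\mathscr{G},p})_{p\in S'}$ identifies their union with
\[
\{\pi\in\Surj(\mathscr{G},G):\pi\text{ unramified outside }S',\ \phi^\pi\in{\textstyle\prod_{p\in S'}}\Sigma_p,\ |\inv(\phi^\pi)|\le X\}.
\]

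Finally, the admissibility of $\inv$ removes the auxiliary ``unramified outside $S'$'' condition. Since $\inv=\prod_p\inv_p$ is multiplicative and $\inv_p(\pi)\ne 1$ exactly when $\pi$ is ramified at $p$, each ramified place contributes a factor of norm at least $|p|$ to $|\inv(\pi)|$; thus $|\inv(\pi)|\le X$ forces every ramified place to satisfy $|p|\le X$, and hence to lie in $S'=S\cup\{|p|\le X\}$. This simultaneously yields the independence from $S$, since $S$ enters only through $S'$ while every ramified place is already contained in $\{|p|\le X\}$. The main obstacle is precisely this reconciliation of the structural constraint imposed by ${\rm Grp}(K)$-morphisms (unramifiedness outside $S'$) with the unrestricted global count on the right; without admissibility the two sides could disagree at infinitely many places. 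Once that is handled the equality is a direct bijection, modulo only the bookkeeping needed to interpret $\Sigma_p$ consistently at the unramified places $p\notin S'$.
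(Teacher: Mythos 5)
Your proof is correct and follows essentially the same route as the paper: decompose the relevant surjections as a disjoint union of the sets ${\rm Epi}(\mathscr{G},(G,S',\phi))$ indexed by the induced local data, then use admissibility of $\inv$ to show the bound $|\inv(\pi)|\le X$ forces all ramification into $S'=S\cup\{|p|\le X\}$. If anything, you are more explicit than the paper about the ``unramified outside $S'$'' constraint built into the definition of morphisms in ${\rm Grp}(K)$, which the paper only reconciles implicitly in its final admissibility step.
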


That the counting function is independent of $S$ will be extremely important for our proofs. The role that $S$ plays is essentially purely bookkeeping - it is useful to impose local conditions at extra places for formulating the categorical results. As we see here, and will see again in Lemma \ref{lem:originalmoment}, the choice of extra places does not affect the counting functions whatsoever.

\begin{proof}
For a fixed $S$ and $X$, consider a surjection $\pi:\mathscr{G}\to G$. Certainly this specializes to an epimorphism in ${\rm Epi}(\mathscr{G},(G,S\cup\{|p|\le X\},\pi\phi_{\mathscr{G}}))$, so this implies
\[
\Surj(\mathscr{G},G) = \bigcup_{\phi}{\rm Epi}(\mathscr{G},(G,S\cup\{|p|\le X\},\phi)).
\]
Moreover, suppose there are two epimorphisms $\pi_i\in{\rm Epi}(\mathscr{G},(G,S\cup\{|p|\le X\}, \phi_i))$ for $i=1,2$ for which their underlying group homomorphisms are equivalent to $\pi$. Then
\[
\pi_1\phi_{\mathscr{G}} = \pi\phi_{\mathscr{G}} = \pi_2\phi_{\mathscr{G}},
\]
which implies $\phi_{1,p}=\phi_{2,p}$ at every place $p\in S\cup\{|p|\le X\}$. In particular, the identity map on the level of groups induces an isomorphism $(G,S\cup\{|p|\le X\},\phi_1)\cong(G,S\cup\{|p|\le X\},\phi_2)$. Thus, the union is in fact a disjoint union
\[
\Surj(\mathscr{G},G) = \coprod_{\phi}{\rm Epi}(\mathscr{G},(G,S\cup\{|p|\le X\},\phi)).
\]
The result then follows by noting that admissibility implies $|\inv(\pi)| = |\inv(\pi\phi_{\mathscr{G}})|$, so the sum over elements with invariant bounded above by $X$ is the same on both sides.
\end{proof}

The fact that $\inv_X^{G,S,\Sigma}(A),\inv_X^{G,S,\Sigma}(B)\ne 0$ implies $\#{\rm Epi}(A,B) = 0$ is important for proving the counting function is independent of the choice of $S$, and is a rephrasing of part of the proof above. Our goal is to study these counting functions using the results of \cite{alberts2022}, which allows us to prove probability $1$ statements by studying the moments of $\inv_X^{G,S,\Sigma}$ with respect to the discrete measure $M$ induced by the finite moments of $\mu_K^{\rm MB}$. The fact that the counting function is independent of $S$ is what allows us to give $\inv$ and $\Sigma$ any behavior at all at finitely many places.

When proving asymptotic results, one often asks that $\inv$ be \textbf{Frobenian} as in \cite{alberts2021}, which guarantees that a Tauberian theorem on the Malle-Bhargava local series will produce a nice asymptotic growth rate. This includes all the discriminant orderings, and notably includes the product of ramified primes ordering. We will be more stringent than this in Section \ref{sec:proof}.

\subsection{Types of restricted local conditions}

The type of family of local conditions $\Sigma=(\Sigma_p)$ taken can have an effect on the rate of growth. In particular, one could choose $|\Sigma_p|=1$ for every place $p$. There are uncountably many ways of making such a choice, but only countably many $G$-extensions of a number field $K$, so we expect \emph{no} $G$-extensions to satisfy such a strong choice of local restrictions.

This behavior can be boiled down to how may splitting types we are allowed to control with $\Sigma$. We make the following definition to capture this distinction:

\begin{definition}
We call $\Sigma = (\Sigma_p)$ an \textbf{admissible} family of local conditions $\Sigma_p\subset \Hom(G_{K_p},G)$ if $\Hom_{ur}(G_{K_p},G)\subseteq \Sigma_p$ for all but finitely many places.
\end{definition}

Lemma \ref{lem:originalmoment} will be proven at this level of generality. As in the case for the invariants, one also often asks for $\Sigma$ to be \textbf{Frobenian} in the sense defined in \cite{alberts2021}. We will be more stringent than this in Section \ref{sec:proof}

\subsection{The first moment of $\inv_X^{G,S,\Sigma}$}

We prove that the $L^1$-norm of $\inv_{X}^{G,S,\Sigma}$ agrees with the coefficients of the Malle-Bhargava local series. The proof is the same if we only assume admissibility, so we state the result with this level of generality:

\begin{lemma}\label{lem:originalmoment}
Let $M_{(G,S,\phi)} = |G^{\rm ab}[|\mu(K)|]|\cdot|G|^{-|S\cup P_\infty|+1}$ be the finite moments of $\mu_K^{\rm MB}$. Fix a finite group $G$, a finite set of places $S$, an admissible invariant $\inv$, and a family of local conditions $\Sigma$. We let $(a_n)$ denote the Dirichlet coefficients of the Malle-Bhargava local series
\[
{\rm MB}_{\inv}(K,\Sigma,s) = \prod_p\left(\frac{1}{|G|}\sum_{f_p\in \Sigma_p} |\inv(f_p)|^{-s}\right) = \sum_{n=1}^{\infty} a_n n^{-s}.
\]
Then
\begin{enumerate}
\item[(a)] If $\Sigma$ is admissible then
\[
\int_{{\rm Grp}(K)} \inv_X^{G,S,\Sigma} dM = \frac{|G|}{|G^{\rm ab}[|\mu(K)|]|}\sum_{n\le X} a_n,
\]
\item[(b)] If $\Sigma$ is not admissible, then $a_n=0$ for all $n$ and there exists a constant $r>1$ depending only on $K$ for which
\[
\int_{{\rm Grp}(K)} \inv_X^{G,S,\Sigma} dM = O(r^{-X/\log X}).
\]
\end{enumerate}
\end{lemma}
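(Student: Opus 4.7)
The plan is to directly unfold the integral using the explicit form of $M$ as a discrete measure. Since $\inv_X^{G,S,\Sigma}$ is the indicator of iso classes $[(H,S',\phi)]$ with $H\cong G$, $S' = S \cup \{|p|\le X\}$, $\phi\in\prod_{p\in S'}\Sigma_p$, and $|\inv(\phi)|\le X$, and since the mass $M_{(H,S',\phi)}$ depends only on $|G|$, $|S'|$, and $|\mu(K)|$, the integral reduces to this shared mass times a count of admissible tuples $\phi$ (modulo the usual orbit-stabilizer accounting). The $|G|^{-1}$ in each local Malle--Bhargava factor $L_p(s) = |G|^{-1}\sum_{f\in\Sigma_p}|\inv(f)|^{-s}$ is precisely the normalization that turns the raw tuple count into a Dirichlet coefficient.

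For part (a), the strategy is to recognize the tuple count as $\sum_{n\le X} a_n$ via the Euler product $\prod_p L_p(s)$. Expanding the product, $a_n$ is a weighted sum over tuples $(f_p)\in\prod_p\Sigma_p$ with $\prod_p|\inv_p(f_p)|=n$ and weight $\prod_p|G|^{-1}$. Admissibility of $\inv$ forces $f_p$ to be unramified whenever $|p|>X$ (since ramified local homomorphisms contribute $\ge |p|$), so all ramification sits inside $\{|p|\le X\}\subseteq S'$. Admissibility of $\Sigma$ provides $\Hom_{ur}(G_{K_p},G)\subseteq \Sigma_p$ for all but finitely many $p$, so at each such ``absent'' prime $p\notin S'$ the $|G|$ unramified choices exactly cancel the $|G|^{-1}$ weight, contributing a factor of $1$; the residual sum ranges over tuples $\phi\in\prod_{p\in S'}\Sigma_p$ with $|\inv(\phi)|\le X$. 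Multiplying by the mass $M$ and simplifying the $|G|$-powers yields the target formula.

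For part (b), I would first verify that $a_n=0$ for all $n$ by showing the Euler product defines the zero Dirichlet series. Non-admissibility gives infinitely many primes $p$ with $|\Sigma_p\cap\Hom_{ur}|\le |G|-1$; at these primes $L_p(s)\le (|G|-1)/|G| + O(|p|^{-s})$ is bounded strictly below $1$ for $\Re(s)$ large, and an infinite product of factors bounded away from $1$ vanishes, forcing all Dirichlet coefficients to be zero. For the quantitative bound, the count of valid $\phi$ is at most $|G|^{|S'|}(1-|G|^{-1})^{B(X)}$ up to a polynomial-in-$X$ correction from the $O(\log X)$ primes where $\phi_p$ can be ramified, with $B(X)$ the number of bad primes in $S'$. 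Pairing with the mass $M$ (whose $|G|^{-|S'\cup P_\infty|+1}$ exactly cancels $|G|^{|S'|}$) leaves exponential decay in $B(X)$, which the desired $O(r^{-X/\log X})$ bound demands be converted into a bound involving $|S'|\sim X/\log X$.

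The main obstacle I anticipate is this last conversion in part (b): lower-bounding $B(X)$ by a constant multiple of $|S'|$ does not follow directly from ``infinitely many bad primes.'' I plan to address this by combining the per-prime savings with the $|G|^{-|S'|}$ decay already built into $M$, so that even if the bad primes are sparse, the uniform exponential suppression across the full set $S'$ yields the claimed rate. This step is where the ``$r$ depends only on $K$'' uniformity is most delicate and where the argument is most likely to require a careful technical refinement.
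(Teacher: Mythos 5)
Your treatment of part (a) is essentially the paper's proof: unfold the discrete integral, group local homomorphisms by their restriction to inertia, use admissibility of $\inv$ to confine all ramification to $\{|p|\le X\}$, use admissibility of $\Sigma$ so that the $|G|$ unramified homomorphisms at each place outside a fixed finite set exactly absorb the $|G|^{-1}$ normalization, and match Dirichlet coefficients of the resulting Euler product. That part is fine.

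For part (b), the obstacle you flag is genuine, but your proposed repair does not work. The factor $|G|^{-|S'\cup P_\infty|+1}$ in $M$ carries no spare decay: it is exactly consumed by the $|G|$ unramified choices of $\phi_p$ at each finite place of $S'$ --- this cancellation is precisely why part (a) evaluates to $\sum_{n\le X}a_n$, a quantity that \emph{grows} polynomially rather than decaying. So there is no ``uniform exponential suppression across all of $S'$'' left to exploit. The decay in the paper's proof comes from a different mechanism, which is the idea missing from your argument: the constraint $|\inv(\phi)|\le X$ together with multiplicativity of the admissible invariant forces at most one place with $|p|\in[X/2,X]$ to be ramified in $\phi$ (two such places would already give $|\inv(\phi)|\ge X^2/4>X$). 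Hence all but one of the $\gg_K X/\log X$ places of norm in $[X/2,X]$ satisfy $\phi|_{I_p}=1$, and at every such place lying in the bad set $A$ the unramified count is at most $|G|-1$ rather than $|G|$, producing the factor $\left(\frac{|G|-1}{|G|}\right)^{aX/\log X}$; the residual sum over $\phi$ is then $O(X^{1+\epsilon})$ by a Tauberian theorem, giving $O(r^{-X/\log X})$. Note that even this argument needs $A$ to contain $\gg X/\log X$ places of norm in $[X/2,X]$, so your worry about sparse bad sets is well founded and is not resolved by the written proof for an arbitrary non-admissible $\Sigma$; in the paper's actual application of part (b), inside Lemma \ref{lem:mixedmoment}(b), the set $A$ is all places, so the bound holds as used. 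Either way, the ingredient you need is the ``at most one large ramified place'' observation, not extra decay from the measure.
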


One important consequence of Lemma \ref{lem:originalmoment} is that $\inv_X^{G,S,\Sigma}$ is an $L^1$-ordering in the sense of \cite[Definition 1.2]{alberts2022}. Thus, \cite[Lemma 3.1]{alberts2022} states that $N(\mathscr{G},\inv_X^{G,S,\Sigma})$ is well-defined and finite almost surely with respect to $\mu_K^{\rm MB}$.

\begin{proof}
Take $P_{\infty}\subseteq \{|p|\le X\}$ by convention and set $S(X) = S\cup\{|p|\le X\}$ for simplicity. The moment is equal to a finite sum
\begin{align*}
\int_{{\rm Grp}(K)} \inv_X^{G,S,\Sigma}\ dM &= \sum_{\substack{(G,S(X),\phi)\\|\inv(\phi)|\le X\\\phi\in\prod \Sigma_p}} |G^{\rm ab}[|\mu(K)|]|^{-1}|G|^{-|S(X)|+1}\\
&=\frac{|G|}{|G^{\rm ab}[|\mu(K)|]|}\sum_{\substack{(G,S(X),\phi)\\|\inv(\phi)|\le X\\\phi\in\prod \Sigma_p}} \prod_{p\in S\cup\{|p|\le X\}} |G|^{-1}
\end{align*}
For any map $f:I_p \to G$, we set $\Sigma_p(f) = \{\psi\in \Sigma_p : \psi|_{I_p} = f\}$. Then it follows that
\[
\int_{{\rm Grp}(K)} \inv_X^{G,S,\Sigma}\ dM = \frac{|G|}{|G^{\rm ab}[|\mu(K)|]|}\sum_{\substack{\phi:\bigast_p I_p\to G\\|\inv(\phi)|\le X\\\phi\in\res_{\bigast_p I_p}(\prod \Sigma_p)}} \prod_{p\in S(X)}\frac{|\Sigma_p(\phi_p|_{I_p})|}{|G|},
\]
where $\bigast_p I_p$ is the pro-free product of inertia groups.

Suppose first that $\Sigma$ is admissible to prove part (a). Take $S$ to be large enough to contain all places for which $\Sigma_p \not\supseteq \Hom_{ur}(G_{K_p},G)$, which is finite by admissibility. Given that $|\Sigma_p(1)| = |G|$ for any $p\not\in S$, we can write
\[
\int_{{\rm Grp}(K)} \inv_X^{G,S,\Sigma}\ dM = \frac{|G|}{|G^{\rm ab}[|\mu(K)|]|}\sum_{\substack{\phi:\bigast_p I_p\to G\\|\inv(\phi)|\le X\\\phi\in\res_{\bigast_p I_p}(\prod \Sigma_p)}}\prod_{p}\frac{|\Sigma_p(\phi_p|_{I_p})|}{|G|}.
\]
We consider the corresponding Dirichlet series
\[
\frac{|G|}{|G^{\rm ab}[|\mu(K)|]|}\sum_{\substack{\phi:\bigast_p I_p\to G\\\phi\in\res_{\bigast_p I_p}(\prod \Sigma_p)}} \prod_{p}\frac{|\Sigma_p(\phi_p|_{I_p})|}{|G|}\inv(\phi)^{-s}.
\]
This is a sum of multiplicative functions, and so factors as
\begin{align*}
&\frac{|G|}{|G^{\rm ab}[|\mu(K)|]|}\prod_p\left(\frac{|\Sigma_p(1)|}{|G|} + \frac{1}{|G|}\sum_{\substack{f_p\in\Hom(I_p,G)\\f_p(I_p)\ne 1}}|\Sigma_p(f)||\inv_p(f_p)|^{-s}\right)\\
&=\frac{|G|}{|G^{\rm ab}[|\mu(K)|]|}{\rm MB}_{\inv}(K,\Sigma,s),
\end{align*}
noting that $|\Sigma_p(1)|/|G| = 1$ for all but finitely many $p$ to ensure convergence of the product in a right half plane. Matching the coefficients concludes the proof of part (a).

Now, suppose $\Sigma$ is not admissible. That the Malle-Bhargava local series diverges to $0$ is clear from the fact that infinitely many constant terms are smaller than $1$ in this case. Let $A$ be the infinite set of places for which $\Sigma_p\not\supseteq \Hom_{ur}(G_{K_p},G)$. Then
\begin{align*}
\int_{{\rm Grp}(K)} \inv_X^{G,S,\Sigma}\ dM &= \frac{|G|}{|G^{\rm ab}[|\mu(K)|]|}\sum_{\substack{\phi:\bigast_p I_p\to G\\|\inv(\phi)|\le X}} \prod_{\substack{p\in A\\\phi|_{I_p}=1\\|p|\le X}} \frac{|G|-1}{|G|}\prod_{\substack{p\\p\not\in A\text{ or }\phi|_{I_p}\ne1}}\frac{|\Sigma_p(\phi|_{I_p})|}{|G|}.
\end{align*}
At most one place with $|p|\in [X/2, X]$ can be ramified in $\phi$ at a time, so by appealing to a lower bound for the prime number theorem over $K$ there exists a positive constant $a$ depending only on $K$ for which
\begin{align*}
\int \inv_X^{G,S,\Sigma}\ dM &\le  \frac{|G|}{|G^{\rm ab}[|\mu(K)|]|}\left(\frac{|G|-1}{|G|}\right)^{\frac{aX}{\log X}}\sum_{\substack{\phi:\bigast_p I_p\to G\\|\inv(\phi)|\le X}} \prod_{\substack{p\\p\not\in A\text{ or }\phi|_{I_p}\ne1}}\frac{|\Sigma_p(\phi|_{I_p})|}{|G|}.
\end{align*}
The remaining sum is multiplicative with generating Dirichlet series
\[
\prod_p\left(1 + \frac{1}{|G|}\sum_{\substack{f_p\in\Hom(I_p,G)\\f_p(I_p)\ne 1}}|\Sigma_p(f)||\inv_p(f_p)|^{-s}\right).
\]
A Tauberian theorem bounds the size of this sum above by $O(X^{1+\epsilon})$ (see, for instance, \cite[Corollary 2.4]{alberts2021}). Thus
\begin{align*}
\int_{{\rm Grp}(K)} \inv_X^{G,S,\Sigma}\ dM &\ll  X^{1+\epsilon}\left(\frac{|G|-1}{|G|}\right)^{\frac{aX}{\log X}}\\
&\ll r^{-\frac{X}{\log X}}
\end{align*}
for some constant $r > 1$.
\end{proof}

\subsection{Tauberian theorem}

Lemma \ref{lem:originalmoment} is extremely broad. Given the additional assumption that $\inv$ and $\Sigma$ are Frobenian, a Tauberian theorem can be applied to Lemma \ref{lem:originalmoment} to give the asymptotic growth rate of $\int \inv_X^{G,S,\Sigma} dM$ as $X\to \infty$. This process is done in general in \cite[Section 2]{alberts2021}.

In order to keep the proof of Lemma \ref{lem:mixedmoment} as accessible as possible, we restrict to the following case:
\begin{corollary}\label{cor:MB_Taub}
Suppose $\inv:\prod_p \Hom(G_{K_p},G) \to I_K$ is an invariant for which there exists a weight function $w:G\to \Z_{\ge 0}$ such that
\begin{itemize}
\item{$w(1) = 0$,}
\item{$w$ is constant on $K$-conjugacy classes of $G$, and}
\item{for all but finitely many places $p$, $\inv_p(\pi) = w(g)$ if and only if $\pi(I_p) = \langle g\rangle$.}
\end{itemize}
and $\Sigma = (\Sigma_p)$ a family of local conditions for which $\Sigma_p = \Hom(G_{K_p},G)$ at all but finitely many places of $K$. If $(a_n)$ are the Dirichlet coefficients of the Malle-Bhargava local series ${\rm MB}_{\inv}(K,\Sigma,s)$ then
\[
\frac{|G|}{|G^{\rm ab}[|\mu(K)|]|}\sum_{n\le X} a_n \sim c_{\inv}(K,\Sigma) X^{1/a_{\inv}(G)}(\log X) ^{b_{\inv}(K,G) - 1},
\]
where
\begin{enumerate}[(a)]
\item The minimal weight of elements in $G$ is given by
\[
a_{\inv}(G) = \min_{g\in G\setminus \{1\}} w(g),
\]
generalizing $a(G)$ in Malle's Conjecture \ref{conj:malle}.
\item The average number of ways to be ramified with minimal weight in $G$ is given by
\[
b_{\inv}(K,G) = \#\{K\text{-conjugacy classes }\kappa\subseteq G\text{ of minimal weight }w(\kappa) = a_{\inv}(G)\},
\]
generalizing $b(K,G)$ in Malle's Conjecture \ref{conj:malle}.
\item The leading coefficient is given by a convergent Euler product with factors accounting for units
\begin{align*}
c_{\inv}(K,\Sigma) = &\frac{\left(\underset{s=1}{\rm Res}\ \zeta_K(s)\right)^{b_{\inv}(K,G)-1}}{a_{\inv}(G)^{b_{\inv}(K,G)-1}(b_{\inv}(K,G)-1)!|G^{\rm ab}[|\mu(K)|]|\cdot|G|^{u_K}}\prod_{p\mid \infty} |\Sigma_p|\\
&\cdot \prod_{p\nmid \infty} \left[(1-p^{-1})^{b_{\inv}(K,C)}\left(\frac{1}{|G|}\sum_{f\in \Sigma_p} p^{-\frac{\nu_p\inv(f)}{a_{\inv}(G)}}\right)\right],
\end{align*}
where $u_K = \rk \mathcal{O}_K^{\times} = r_1(K) + r_2(K) - 1 = |P_\infty| - 1$.
\end{enumerate}
\end{corollary}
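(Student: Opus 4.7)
The plan is to recognize ${\rm MB}_{\inv}(K,\Sigma,s)$ as a Dirichlet series with convergent Euler product whose rightmost singularity is an isolated pole of integer order, and then apply a Delange-style Tauberian theorem to convert residue data into partial-sum asymptotics. First I would compute the Euler factor at a generic place $p$ of $K$ -- i.e., one where $\Sigma_p = \Hom(G_{K_p},G)$, the hypothesis on $\inv_p$ holds, and $p$ has residue characteristic coprime to $|G|$ so that ramification is tame. Parameterizing $\Hom(G_{K_p},G)$ via the tame quotient by pairs $(\sigma,\tau)\in G^2$ with $\sigma\tau\sigma^{-1}=\tau^{Np}$ and grouping by the conjugacy class of $\tau$ (whose cyclic span is $\phi(I_p)$) collapses the local factor to
\[
L_p(s) \;=\; \frac{1}{|G|}\sum_{f\in\Hom(G_{K_p},G)}|\inv(f)|^{-s} \;=\; \sum_{\substack{C\text{ conj.\ class of }G\\ C^{Np}=C}}(Np)^{-w(C)s},
\]
and peeling off the minimal-weight contribution gives $L_p(s) = 1 + \sum_{w(C)=a_{\inv}(G),\,C^{Np}=C}(Np)^{-a_{\inv}(G)s} + O((Np)^{-(a_{\inv}(G)+1)\Re(s)})$.

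Next I would factor out the dominant singularity. Writing $a = a_{\inv}(G)$ and $b = b_{\inv}(K,G)$, the factorization
\[
{\rm MB}_{\inv}(K,\Sigma,s) \;=\; \zeta_K(as)^{b}\cdot H(s),\qquad H(s) \;=\; \prod_{p\mid\infty}\frac{|\Sigma_p|}{|G|}\prod_{p\nmid\infty}(1-Np^{-as})^{b}L_p(s),
\]
isolates the pole, and the key analytic task is to verify that $H(s)$ is holomorphic and nonzero on some half-plane $\Re(s)>1/a-\delta$. Taking logarithms reduces this to the convergence of $\sum_p(\#\{C : w(C)=a,\ C^{Np}=C\} - b)Np^{-1}$, which follows from Chebotarev's density theorem applied to the cyclotomic extension $K(\zeta_n)$ for $n$ a multiple of the exponent of $G$: partitioning the minimal-weight classes into the $b$ orbits under the cyclotomic action, each orbit contributes on average exactly one class stabilized by $Np$ (by orbit-stabilizer), so the $b$ orbits produce $b$ on average and Chebotarev provides the necessary cancellation. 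The finitely many bad places (where $\Sigma_p$ is nontrivial, where $\inv_p$ is not governed by $w$, or where wild ramification occurs) each contribute an everywhere-holomorphic, nonzero factor absorbed into $H(s)$ without affecting the pole order.

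With the factorization established, $\zeta_K(as)^{b}$ contributes a pole of integer order $b$ at $s_0 = 1/a$ with leading Laurent coefficient $(a^{-1}\Res_{s=1}\zeta_K(s))^{b}$. The standard Tauberian theorem for Dirichlet series with nonnegative coefficients and an algebraic singularity of integer order then yields
\[
\sum_{n\le X}a_n \;\sim\; \frac{H(1/a)}{s_0\,(b-1)!}\left(\frac{\Res_{s=1}\zeta_K(s)}{a}\right)^{\!b}X^{1/a}(\log X)^{b-1}.
\]
Multiplying by $|G|/|G^{\rm ab}[|\mu(K)|]|$ from Lemma \ref{lem:originalmoment}, and absorbing the infinite-place normalizations $|G|^{-|P_\infty|}$ together with the leading $|G|$ into $|G|^{-u_K}$ via $u_K = |P_\infty|-1$, produces exactly the claimed constant $c_{\inv}(K,\Sigma)$. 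The principal obstacle is the Chebotarev cancellation in the second step, which is the classical analytic heart of any Malle-type Tauberian argument; the remaining steps are routine applications of the cited Tauberian machinery and bookkeeping with the normalizing constants.
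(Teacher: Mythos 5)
Your proposal is essentially the paper's own proof: the paper disposes of this corollary in one line by citing the Tauberian theorem \cite[Corollary 2.4]{alberts2021}, and what you have written out is precisely the interior of that black box (Euler factor computation at tame places, factoring off $\zeta_K(a_{\inv}(G)s)^{b_{\inv}(K,G)}$, Chebotarev on $K(\zeta_n)/K$ to control the remaining factor, Delange). Two caveats. First, for the Delange step it is not enough that $\sum_p(\#\{C: w(C)=a_{\inv}(G),\ C^{Np}=C\}-b_{\inv}(K,G))Np^{-1}$ converges; you need $H(s)$ to continue holomorphically to (a neighborhood of) the line $\Re(s)=1/a_{\inv}(G)$, which you get by expanding the indicator of $C^{Np}=C$ in characters of $\Gal(K(\zeta_n)/K)$ and invoking the continuation and nonvanishing of the associated Hecke $L$-functions at the edge --- the mere convergence statement you give is the Dirichlet-density shadow of this and would not by itself license the Tauberian theorem. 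Second, your final constant carries $(\Res_{s=1}\zeta_K(s))^{b_{\inv}(K,G)}$, whereas the corollary as printed has exponent $b_{\inv}(K,G)-1$; your computation is the correct one (the Laurent coefficient of $\zeta_K(a_{\inv}(G)s)^{b_{\inv}(K,G)}$ contributes the full power $b_{\inv}(K,G)$), and it agrees with the constant $c(K,G)$ displayed in Theorem \ref{thm:prob1intro}, so the exponent in the corollary's statement appears to be a typo rather than an error on your part.
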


The proof is a consequence of the Tauberian theorem \cite[Corollary 2.4]{alberts2021}. The discriminant is one such invariant covered by Corollary \ref{cor:MB_Taub}, with weight function given by the index function $\ind(g) = n - \#\{\text{orbits of }g\}$. Another important example is the product of ramified primes ordering, corresponding to the weight function that equals $1$ on all nonidentity elements. A family of local conditions $\Sigma$ satisfying the conditions of Corollary \ref{cor:MB_Taub} is one that makes a restriction at only finitely many places.

\section{Applying the Law of Large Numbers}\label{sec:proof}

We prove Theorem \ref{thm:muK_Malle_general} in this section, which is a slight generalization of Theorem \ref{thm:prob1intro} to include other orderings like the product of ramified primes ordering and restricted local condtions at finitely many places.

This will be done via the main results of \cite{alberts2022}. We will prove some facts about epi-products in ${\rm Grp}(K)$, which will allow us to bound the mixed moments of $\inv_X^{G,S,\Sigma}$ that appear in \cite[Theorem 1.4]{alberts2022} in order to complete step (\label{it:mixed_moments}). These, in conjunction with \cite[Theorem 1.3]{alberts2022}, will be used to prove Theorem \ref{thm:muK_Malle_general}.

\subsection{Counting functions ordered by admissible invariants}

The proof of Theorem \ref{thm:prob1intro} extends to the following result with little work. As it is natural to work at this level of generality, we will prove the following:

\begin{theorem}\label{thm:muK_Malle_general}
Let $K$ be a number field, $S$ a finite set of places, $G$ a finite group, $\inv$ an admissible invariant for which there exists a weight function $w:G\to \Z_{\ge 0}$ such that
\begin{itemize}
\item{$w(1) = 0$,}
\item{$w$ is constant on $K$-conjugacy classes of $G$, and}
\item{for all but finitely many places $p$, $\inv_p(\pi) = w(g)$ if and only if $\pi(I_p) = \langle g\rangle$,}
\end{itemize}
and $\Sigma = (\Sigma_p)$ a family of local conditions for which $\Sigma_p = \Hom(G_{K_p},G)$ at all but finitely many places of $K$. Then
\begin{enumerate}
\item[(i)] For any $\epsilon > 0$,
\[
\frac{N(\mathscr{G},\inv_X^{G,S,\Sigma})}{X^{1/a_{\inv}(G)+\epsilon}} \overset{a.s.}{\longrightarrow} 0
\]
as $X\to \infty$, where the ``a.s." stands for ``converges almost surely".
\item[(ii)] If $G = \langle g\in G : w(g) = a_{\inv}(G)\rangle$ is generated by minimal weight elements, then
\[
\frac{N(\mathscr{G},\inv_X^{G,S,})}{c_{\inv}(K,\Sigma)X^{1/a_{\inv}(G)}(\log X)^{b_{\inv}(K,G) - 1}} \overset{p.}{\longrightarrow} 1
\]
as $X\to \infty$, where the ``p." stands for ``converges in probability".
\item[(iii)] Suppose every proper normal subgroup $N\normal G$ satisfies one of the following:
\begin{enumerate}
\item[(a)] $N$ contains no elements of minimal weight, or
\item[(b)] $G\setminus N$ contains at least two $K$-conjugacy classes of minimal weight.
\end{enumerate}
Then
\[
\frac{N(\mathscr{G},\inv_X^{G,S,\Sigma})}{c_{\inv}(K,\Sigma)X^{1/a_{\inv}(G)}(\log X)^{b_{\inv}(K,G) - 1}} \overset{a.s.}{\longrightarrow} 1
\]
as $X\to \infty$, where the ``a.s." stands for ``converges almost surely".
\end{enumerate}
The invariants $a_{\inv}(G)$, $b_{\inv}(K,G)$, and $c_{\inv}(K,\Sigma)$ are defined as in Corollary \ref{cor:MB_Taub}.
\end{theorem}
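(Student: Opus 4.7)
The plan is to execute the three-step outline of Section 4, with the key technical input being control of the mixed moments. Step (I) is handed to us by Lemma \ref{lem:counting_function}, which identifies $N(\mathscr{G},\inv_X^{G,S,\Sigma})$ with the surjection-counting function appearing in the theorem. Step (II) combines the explicit first moment in Lemma \ref{lem:originalmoment} with the Tauberian asymptotics of Corollary \ref{cor:MB_Taub} to yield
\[
\int_{{\rm Grp}(K)} \inv_X^{G,S,\Sigma}\,dM \;\sim\; c_{\inv}(K,\Sigma)\,X^{1/a_{\inv}(G)}(\log X)^{b_{\inv}(K,G)-1},
\]
so the conjectured main term is already the $L^1$-norm of our ordering. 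Part (i) then drops out from convergence in expectation plus a dyadic Borel--Cantelli argument: Lemma \ref{lem:originalmoment} combined with \cite[Lemma 3.1]{alberts2022} gives almost sure finiteness of $N(\mathscr{G},\inv_X^{G,S,\Sigma})$, and bounding by the first moment along $X=2^n$ followed by monotonicity in $X$ upgrades this to almost sure convergence to zero after dividing by $X^{1/a_{\inv}(G)+\epsilon}$.

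Parts (ii) and (iii) require Step (III), the mixed moment bounds encoded in Lemma \ref{lem:mixedmoment}. For a tuple $((G,S(X),\phi_i))_{i=1}^{j}$, the product $\prod_{i=1}^{j}\#{\rm Epi}(\mathscr{G},(G,S(X),\phi_i))$ factors through the join of the $(G,S(X),\phi_i)$ in the lattice of quotients of $\mathscr{G}$: each compatible tuple of surjections $\pi_i:\mathscr{G}\to G$ determines, and is determined by, a single surjection onto a joint quotient $(H,T,\psi)$ where $H$ is a subdirect product of $G^{j}$ and $\psi$ is the corresponding local datum. Integrating against $\mu_K^{\rm MB}$ and invoking Theorem \ref{thm:muK_construct} termwise converts the mixed moment into a sum over such joint quotients of $|H^{\rm ab}[|\mu(K)|]|^{-1}|H|^{-|T\cup P_\infty|+1}$. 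Pairing this with $\prod_i \inv_X^{G,S,\Sigma}(G,S(X),\phi_i)$ and summing over allowed tuples produces a Malle--Bhargava-type Euler product attached to $H$ with local conditions assembled from the $\phi_i$, to which Corollary \ref{cor:MB_Taub} again applies. With those bounds in hand, part (ii) follows from \cite[Theorem 1.4]{alberts2022} applied with $j=2$, and part (iii) follows from \cite[Corollary 1.5]{alberts2022} applied for $j$ ranging through all even integers.

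The main obstacle is showing that only the diagonal joint quotient $H=G$ contributes to the leading order, so that the $j$th mixed moment matches $\bigl(\int \inv_X^{G,S,\Sigma}\,dM\bigr)^{j}$ asymptotically. Any proper subdirect product $H\subsetneq G^{j}$ whose projections to each factor remain surjective has a nontrivial kernel inside $G^{j}$, which itself corresponds, via one of the coordinate projections, to an identification of one copy of $G$ with a quotient of $G$ by a proper normal subgroup. The minimal-weight and $K$-conjugacy hypotheses in (ii) and (iii) are exactly calibrated to force the Malle--Bhargava Euler product attached to such an $H$ to produce a strictly smaller exponent of $X$ or strictly fewer powers of $\log X$ than the diagonal contribution: in (ii) the generation hypothesis forbids minimal-weight ramification from being trapped in a proper normal subgroup, so subdirect contributions lose either a power of $X$ or a power of $\log X$; in (iii) the two-conjugacy-class alternative ensures that the error decays fast enough, uniformly in $j$, to verify the Borel--Cantelli-type summability hypothesis of \cite[Corollary 1.5]{alberts2022}. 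Carrying out this subdirect product analysis uniformly across tuples and local data, and summing absolutely the resulting Euler products, is the bulk of the technical work hidden inside Lemma \ref{lem:mixedmoment}.
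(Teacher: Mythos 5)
Your overall architecture matches the paper's: identify $N(\mathscr{G},\inv_X^{G,S,\Sigma})$ with the surjection count via Lemma \ref{lem:counting_function}, compute the first moment via Lemma \ref{lem:originalmoment} and Corollary \ref{cor:MB_Taub}, expand the mixed moments as a sum over subdirect products $H\le G^j$ with surjective projections (this is \cite[Lemma 5.3]{alberts2022}), and feed the resulting bounds into the limit theorems of \cite{alberts2022}. Your route for part (i) --- Markov with the first moment, Borel--Cantelli along $X=2^n$, and monotonicity of $N$ in $X$ --- is a legitimate and in fact more elementary alternative to the paper's use of \cite[Corollary 1.4]{alberts2022} with the $2k^{\rm th}$ mixed moments.

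There are, however, two genuine problems. First, your central heuristic is stated backwards: you claim that ``only the diagonal joint quotient $H=G$ contributes to the leading order, so that the $j$th mixed moment matches $\bigl(\int\inv_X^{G,S,\Sigma}\,dM\bigr)^j$.'' The term producing $\bigl(\int\inv_X^{G,S,\Sigma}\,dM\bigr)^j$ is the \emph{full product} $H=G^j$ (whose moment factors as the product of moments, cf.\ Lemma \ref{lem:nonepiprod}); the diagonal $\Delta(G)\le G^j$ is one of the \emph{proper} subdirect products that must be shown negligible (it is, because its weight function is $j\cdot w$, so $a_{\inv_H}(H)=j\,a_{\inv}(G)$). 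Relatedly, your claim that every proper subdirect product has ``a nontrivial kernel'' is false --- the diagonal has trivial kernels $\ker\rho_i|_H$ --- and what the argument actually needs is that the kernels $H_i=\iota_i(G)\cap H$ are \emph{proper normal} subgroups of $G$, so that the hypotheses in (ii)/(iii) bound the number of minimal-weight $K$-conjugacy classes they contain and yield $b_{\inv_H}(K,H)\le 2b_{\inv}(K,G)-2\beta$. Second, for part (iii) you invoke \cite[Corollary 1.5]{alberts2022} over all even $j$, which would require bounds on the $2k^{\rm th}$ mixed moments restricted away from $E(2k,M)$; these are never established (Lemma \ref{lem:mixedmoment}(b) is proved only for $j\in\{1,2\}$) and are unnecessary. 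The paper obtains almost sure convergence from the second moment alone: condition (iii) forces $\beta\ge 2$, so the variance is $\ll(\text{main term})^2/(\log X)^3$, and the summability of $\sum n^{-1}(\log n)^{-3}$ lets \cite[Theorem 1.3(iii)]{alberts2022} conclude. As written, your part (iii) rests on unproved higher-moment estimates, and your part (ii) should cite \cite[Theorem 1.3(ii)]{alberts2022} together with the variance bound of \cite[Theorem 1.4]{alberts2022}, not \cite[Theorem 1.4]{alberts2022} alone.
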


Certainly the discriminant ordering satisfies these conditions, so it is clear that Theorem \ref{thm:prob1intro} is a special case of Theorem \ref{thm:muK_Malle_general}. Of particular interest is the product of ramified primes ordering, which also satisfies these conditions. The remainder of this section will be focused on proving Theorem \ref{thm:muK_Malle_general}.

\subsection{Epi-products in ${\rm Grp}(K)$}

The categorical results of \cite{alberts2022} rely entirely on the existence or nonexistence of epi-products respected by the finite moments $M$. The epi-product of objects $G_1$ and $G_2$ is defined in \cite[Definition 3.1]{alberts2022} as an object $G_1\times_{\rm epi} G_2$ satisfying the universal property
\[
\begin{tikzcd}
H \dar[two heads]\drar[two heads, dashed]\rar[two heads] &G_2\\
G_1 &G_1\times_{\rm epi} G_2\uar[two heads]\lar[two heads]
\end{tikzcd}
\]
where all morphisms in the diagram (including the universal morphism) are required to be epimorphisms. The existence of epi-products is a property of the category, and not dependent on the ordering being considered, so we prove a classification of epi-products in ${\rm Grp}(K)$ separately.

The category ${\rm Grp}(K)$ has finite products between objects with local data at the same places given by
\[
(G_1,S,\phi_1)\times (G_2,S,\phi_2) = (G_1\times G_2,S,\phi_1\times\phi_2),
\]
which is inherited from the product structure on ${\rm Grp}$. If an epi-product exists, it must be isomorphic to the usual product via the universal morphism.

\begin{lemma}\label{lem:epiprod}
The objects $(G_1,S,\phi_1)$ and $(G_2,S,\phi_2)$ have an epi-product in ${\rm Grp}(K)$ if
\[
G_1 \times G_2 = \langle (\phi_1\times \phi_2)(G_{K_p}): p\in S \rangle.
\]
\end{lemma}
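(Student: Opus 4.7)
The plan is to show that the ordinary product $(G_1\times G_2, S, \phi_1\times\phi_2)$, inherited from the product structure on $\mathrm{Grp}$, serves as the epi-product, with the projections onto each factor as the structural epimorphisms. Verifying this amounts to checking three things: that the projections are epimorphisms in $\mathrm{Grp}(K)$, that for any competitor $(H,S',\psi)$ with epimorphisms to both factors there is an induced morphism to $(G_1\times G_2, S, \phi_1\times\phi_2)$, and that this induced morphism is itself surjective. The hypothesis about generation by decomposition images is what enforces the last of these.

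First, I would record that the projections $\mathrm{pr}_i : (G_1\times G_2, S, \phi_1\times\phi_2) \to (G_i, S, \phi_i)$ are morphisms by Definition \ref{def:Grp}: the place sets agree, and by construction $\mathrm{pr}_i \circ (\phi_1\times \phi_2)_p = \phi_{i,p}$ for every $p\in S$. They are surjective on the underlying groups, hence epimorphisms in this category.

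Next, suppose $(H, S', \psi)$ admits epimorphisms $\pi_i : (H, S', \psi) \twoheadrightarrow (G_i, S, \phi_i)$ for $i=1,2$. The direction of Definition \ref{def:Grp}'s morphism condition forces $S \subseteq S'$, and for each $p\in S$ we have $\pi_i \psi_p = \phi_{i,p}$. The universal property of the product of groups gives a unique homomorphism $\pi = (\pi_1,\pi_2) : H \to G_1\times G_2$ with $\mathrm{pr}_i\circ \pi = \pi_i$. To see that $\pi$ defines a morphism in $\mathrm{Grp}(K)$, I would check the two compatibility conditions: for $p\in S\cap S' = S$, $\pi\psi_p = (\pi_1\psi_p, \pi_2\psi_p) = (\phi_{1,p},\phi_{2,p}) = (\phi_1\times\phi_2)_p$; and for $p\in S'\setminus S$, each $\pi_i\psi_p$ kills $I_p$ because the $\pi_i$ are morphisms, so $\pi\psi_p(I_p) = (1,1) = 1$.

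The crux is surjectivity of $\pi$. The image $\pi(H)$ is a subgroup of $G_1\times G_2$ containing $\pi\psi_p(G_{K_p}) = (\phi_1\times\phi_2)_p(G_{K_p})$ for every $p\in S$. By the standing hypothesis these subgroups generate $G_1\times G_2$, so $\pi$ is surjective and hence an epimorphism making the required diagram commute. Uniqueness is inherited from uniqueness in the universal property of the ordinary product on the underlying groups. The only real obstacle is precisely this surjectivity step, where the generation hypothesis is indispensable: without it, $\pi(H)$ could fall into a proper subgroup of $G_1\times G_2$ (a ``Goursat-type'' obstruction), and the candidate would fail to be an epi-product.
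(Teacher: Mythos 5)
Your proposal is correct and takes essentially the same route as the paper: identify the epi-product with the ordinary product $(G_1\times G_2,S,\phi_1\times\phi_2)$ and use the generation hypothesis to force the universal morphism $\pi$ to be surjective via $G_1\times G_2=\langle \pi\psi_p(G_{K_p}):p\in S\rangle\le \im\pi$. The paper's proof is terser and omits the routine checks (that the projections and $\pi$ respect the local data, including the inertia condition at $p\in S'\setminus S$), which you spell out correctly.
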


The existence of epi-products is closely tied to the correlation of $\#{\rm Epi}(\mathscr{G},(G_1,S,\phi_1))$ and $\#{\rm Epi}(\mathscr{G},(G_2,S,\phi_2))$. These are constructed so that elements $\pi_i\in{\rm Epi}(\mathscr{G},(G_i,S,\phi_i))$ model number fields $L_1/K$ and $L_2/K$ with Galois groups $G_1$ and $G_2$ respectively with prescribed local conditions given by $\phi_1$ and $\phi_2$. The condition that
\[
G_1 \times G_2 = \langle (\phi_1\times \phi_2)(G_{K_p}) : p\in S\rangle
\]
translates to $L_1\cap L_2 = K$, i.e. $L_1$ and $L_2$ have trivial intersection. By the Chebotarev density theorem, this is equivalent to the distributions of Frobenius in $\Gal(L_1/K)$ and $\Gal(L_2/K)$ being independent. As $\mu_K^{\rm MB}$ and $\mu_{K,S}^{\rm MB}$ were constructed from the distribution of Frobenius elements given by Chebotarev density, intuitively it stands to reason that independence of these distributions corresponds to independence of $\pi_1$ and $\pi_2$ in some appropriate sense.

The author proves that $\#{\rm Epi}(\mathscr{G},(G_1,S,\phi_1))$ and $\#{\rm Epi}(\mathscr{G},(G_2,S,\phi_2))$ are uncorrelated when the epi-product exists (and $M$ respects the product structure) in greater generality in \cite[Lemma 5.2]{alberts2022}. This proof is done more directly, but the author's primary inspiration for those results is precisely this correspondence.

\begin{proof}
Suppose we have a commutative diagram
\[
\begin{tikzcd}
(H,S',\psi) \rar[two heads] \dar[two heads] \drar[dashed]{\pi}&(G_2,S,\phi_2)\\
(G_1,S,\phi_1)  & (G_1\times G_2,S,\phi_1\times\phi_2)\lar[two heads]\uar[two heads],
\end{tikzcd}
\]
where $\pi$ is the universal morphism, and suppose that $\langle (\phi_1\times \phi_2)(G_{K_p}) \rangle = G$. Then
\[
G_1\times G_2 = \langle \pi\psi(G_{K_p})\rangle \le \im \pi \le G_1\times G_2,
\]
which implies $\pi$ is surjective as a group homomorphism and therefore is an epimorphism. This is independent of the choice of $(H,S',\psi)$, and thus proves the existence of the epi-product.
\end{proof}

As described in \cite[Theorems 1.4 and 1.5]{alberts2022}, proving the Law of Large Numbers for a counting function depends only on those tuples of elements for which an epi-product does not exist. By showing that these objects have density zero under the ordering, \cite[Theorem 1.3]{alberts2022} will give the asymptotic growth rate with probability $1$. We can use Lemma \ref{lem:epiprod} to give a description of these objects.

As in \cite[Theorem 1.4]{alberts2022}, we define $E(2,M)$ to be the set of objects $((G_1,S_1,\phi_1),(G_2,S_2,\phi_2))\in{\rm Grp}(K)^2$ for which
\begin{itemize}
\item The epi-product $(G_1,S_1,\phi_1)\times_{\rm epi}(G_2,S_2,\phi_2)$ exists, and
\item $M_{(G_1,S_1,\phi_1)\times_{\rm epi}(G_2,S_2,\phi_2)} = M_{(G_1,S_1,\phi_1)} M_{(G_2,S_2,\phi_2)}$.
\end{itemize}

\begin{lemma}\label{lem:nonepiprod}
Let $M$ be the discrete measure on ${\rm Grp}(K)/\cong$ given by $M(G,S,\phi) = M_{(G,S,\phi)} = |G^{\rm ab}[|\mu(K)|]|^{-1}|G|^{-|S\cup S_\infty|+1}$, and let
\[
((G_1,S,\phi_1),(G_2,S,\phi_2))\in {\rm Grp}(K)^{2}\setminus E(2,M)
\]
with the set of places $S$ being the same at each coordinate. Then the group
\[
D=\langle \left(\phi_{1,p}\times \phi_{2,p}\right)(G_{K_p}) : p\in S\rangle
\]
is a proper subgroup of $G_1\times G_{2}$ for which at least one of the following is true:
\begin{enumerate}
\item[(a)] $\rho_i(D) \ne G_i$ for at least one $i\in\{1,2\}$, where $\rho_i:G_1\times G_{2}\to G_i$ is projection onto the $i^{\rm th}$ coordinate, or
\item[(b)] $\iota_j(G_j)\not\subseteq D$ for all $j\in\{1,2\}$, where $\iota_j:G_j \to G_1\times G_{2}$ is the inclusion morphism into the $j^{\rm th}$ coordinate.
\end{enumerate}
\end{lemma}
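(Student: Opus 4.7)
The plan is to argue by contrapositive. To see that $D$ must be a proper subgroup, suppose instead that $D = G_1 \times G_2$. By Lemma \ref{lem:epiprod}, the epi-product of $(G_1,S,\phi_1)$ and $(G_2,S,\phi_2)$ exists and, via its universal property, must be $(G_1 \times G_2, S, \phi_1 \times \phi_2)$: the same calculation in the proof of that lemma shows that the usual categorical product has the required universality in the epimorphism subcategory. I then verify that $M$ factors on this epi-product. Using $(G_1\times G_2)^{\rm ab}[n] \cong G_1^{\rm ab}[n] \times G_2^{\rm ab}[n]$ and the identity $|G_1\times G_2|^{-|S\cup S_\infty|+1} = |G_1|^{-|S\cup S_\infty|+1}|G_2|^{-|S\cup S_\infty|+1}$, one obtains
\[
M_{(G_1\times G_2, S, \phi_1\times\phi_2)} = M_{(G_1,S,\phi_1)}\, M_{(G_2,S,\phi_2)},
\]
which places the pair in $E(2,M)$, contradicting the hypothesis.

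For the dichotomy, I assume that both (a) and (b) fail and derive $D = G_1 \times G_2$, contradicting the properness just established. Failure of (a) means $\rho_1(D) = G_1$ and $\rho_2(D) = G_2$. Failure of (b) means $\iota_j(G_j) \subseteq D$ for some $j$; by symmetry take $j=1$, so $G_1\times\{1\}\subseteq D$. Given any $h\in G_2$, surjectivity of $\rho_2$ furnishes $g_h\in G_1$ with $(g_h,h)\in D$, and then for any $g'\in G_1$ we have $(g'g_h^{-1},1)\in D$, so $(g'g_h^{-1},1)(g_h,h) = (g',h) \in D$. Ranging over $g'$ and $h$ gives $D = G_1\times G_2$, the desired contradiction.

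I do not anticipate any serious obstacle here; both steps are essentially exercises in unwinding the definitions. The only point requiring care is the moment factorization in the first step, which relies on the precise form of $M$ and on the abelianization functor splitting direct products. The group-theoretic argument in the second step is elementary once the correct elements are multiplied in the correct order.
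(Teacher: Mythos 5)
Your proposal is correct and follows essentially the same route as the paper: argue by contrapositive, use Lemma \ref{lem:epiprod} to get existence of the epi-product (necessarily the direct product), and check that the moments $M$ factor so the pair lands in $E(2,M)$. Your explicit element-chase showing $G_1\times\{1\}\subseteq D$ and $\rho_2(D)=G_2$ force $D=G_1\times G_2$ is just a fleshed-out version of a step the paper asserts without detail.
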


\begin{proof}
We prove this result by contrapositive, so we suppose that $\rho_m(D) = G_m$ for each of $m\in \{1,2\}$ and that there exists a coordinate $j$ for which $\iota_j(G_j) \subseteq D$. Without loss of generality, suppose $j=1$. Then $G_1\times 1\subseteq D$ and $\rho_2(D) = G_2$ implies $G_1\times G_2\subseteq D$. By Lemma \ref{lem:epiprod}, this implies $(G_1,S,\phi_1)$ and $(G_2,S,\phi_2)$ has an epi-product, which is necessarily isomorphic to the direct product $(G_1,S,\phi_1)\times (G_2,S,\phi_2)$.

Moreover,
\begin{align*}
M_{(G_1,S,\phi_1)\times (G_2,S,\phi_2)} &= |(G_1\times G_2)^{\rm ab}[|\mu(K)|]|^{-1}|G_1\times G_2|^{-|S\cup P_\infty|+1}\\
&= |G_1^{\rm ab}[|\mu(K)|]|^{-1}|G_1|^{-|S\cup P_\infty|+1}|G_2^{\rm ab}[|\mu(K)|]|^{-1}|G_2|^{-|S\cup P_\infty|+1}\\
&= M_{(G_1,S,\phi_1)}M_{(G_2,S,\phi_2)}.
\end{align*}
By definition, this implies 
\[
((G_1,S,\phi_1),(G_2,S,\phi_2))\in E(2,M).
\]
Thus, we have proven the result by contrapositive.
\end{proof}

\subsection{Bounding the mixed moments}\label{subsec:mixed}

We will be proving Theorem \ref{thm:muK_Malle_general} by appealing to \cite[Theorems 1.3 and 1.4]{alberts2022}. This requires bounds on the mixed moments of $\inv_X^{G,S,\Sigma}$ in order to apply these result. Given a probability measure $\mu$ on the category of pro-objects of $C$ with finite moments $M_G$, the mixed moments $M^{(j)}:C^j/\cong \to [0,\infty]$ are defined by
\[
M^{(j)}_{(G_1,G_2,...,G_j)} = \int \left(\prod_{i=1}^j \#{\rm Epi}(\mathscr{G},G_i)\right)\ d\mu(\mathscr{G}).
\]
The mixed moments need not be finite in general, but we will prove that they are in the cases of interest to us.

\begin{lemma}\label{lem:mixedmoment}
Let $M_{(G,S,\phi)} = |G^{\rm ab}[|\mu(K)|]|\cdot|G|^{-|S\cup P_\infty|+1}$ be the finite moments of $\mu_K^{\rm MB}$. Fix $K$, $G$, $S$, $\inv$, and $\Sigma$ as in Theorem \ref{thm:muK_Malle_general}. Then
\begin{enumerate}
\item[(a)] For each positive integer $k$ and each $j\in\{1,2,...,2k\}$,
\[
\int_{{\rm Grp}(K)^{2k}} \inv_X^{G,S,\Sigma}\ dM^{(j)}(dM)^{2k-j} \ll X^{\frac{2k}{a_{\inv}(G)} + \epsilon}.
\]
\item[(b)] For each integer $j\in\{1,2\}$,
\[
\int_{{\rm Grp}(K)^{2}\setminus E(2,M)} \inv_X^{G,S,\Sigma}\ dM^{(j)}(dM)^{2-j} \ll X^{\frac{2}{a_{\inv}(G)}}(\log X) ^{2b_{\inv}(K,G)-2\beta-1},
\]
where
\[
\beta= \min\left\{\#\left\{\substack{\displaystyle K\text{-conjugacy classes }\kappa\subseteq G\setminus N\\\displaystyle\text{with }w(\kappa)=a_{\inv}(G)}\right\} \Bigg{|}\substack{\displaystyle N\normal G\text{ a proper normal subgroup}\\\displaystyle\text{ with }N\text{ containing at least one }\\\displaystyle \text{minimal weight element.}}\right\}.
\]
\end{enumerate}
Moreover, if $G = \langle g\in G : w(g) = a_{\inv}(G)\rangle$ is generated by minimal weight elements then $\beta \ge 1$.
\end{lemma}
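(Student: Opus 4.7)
The plan is to compute each mixed moment by decomposing the product $\prod_i \#{\rm Epi}(\mathscr{G},(G,S_X,\phi_i))$ through the joint map $(\pi_1,\dots,\pi_j):\mathscr{G}\to G^j$, whose image is a subdirect product $B\subseteq G^j$. Each tuple of compatible epimorphisms factors uniquely through a single epimorphism $\mathscr{G}\twoheadrightarrow(B,S_X,\psi)$ with $\psi_p=(\phi_{1,p},\dots,\phi_{j,p})$. Summing over the $\phi_i$, taking expectation against $\mu_K^{\rm MB}$ via Theorem \ref{thm:muK_construct}, and writing $S_X:=S\cup\{|p|\le X\}$, this produces
\[
\int_{{\rm Grp}(K)^j}\inv_X^{G,S,\Sigma}\,dM^{(j)} \;=\; \sum_{B\subseteq G^j\,\mathrm{subdirect}}\ \sum_{\psi}\ \prod_{i=1}^j\mathbf{1}\!\left[\rho_i\psi\in\prod_p\Sigma_p,\ |\inv(\rho_i\psi)|\le X\right]M_{(B,S_X,\psi)},
\]
where $\rho_i$ is the $i$-th coordinate projection and only finitely many subdirect products $B$ occur.

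For part (a), I would bound each inner sum by applying Lemma \ref{lem:originalmoment} and Corollary \ref{cor:MB_Taub} to $B$ equipped with the admissible invariant $\inv_B(\psi):=\prod_i\inv(\rho_i\psi)$, whose weight function is $w_B(b):=\sum_iw(\rho_ib)$. Since $b\ne 1$ in $B$ forces some $\rho_ib\ne 1$ with $w(\rho_ib)\ge a_{\inv}(G)$, one has $a_{\inv_B}(B)\ge a_{\inv}(G)$; and since $|\inv(\rho_i\psi)|\le X$ for all $i$ implies $|\inv_B(\psi)|\le X^j$, the inner sum is $O(X^{j/a_{\inv_B}(B)+\epsilon})=O(X^{j/a_{\inv}(G)+\epsilon})$. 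Multiplying by the $2k-j$ remaining single-moment factors, each contributing $O(X^{1/a_{\inv}(G)+\epsilon})$ by the same results, yields the claimed $O(X^{2k/a_{\inv}(G)+\epsilon})$.

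For part (b), Lemma \ref{lem:nonepiprod} translates the restriction to ${\rm Grp}(K)^2\setminus E(2,M)$ into the condition that the local-data image $L=\langle(\phi_{1,p}\times\phi_{2,p})(G_{K_p}):p\in S\rangle$ is a proper subgroup of $G\times G$. In the decomposition this either forces $B\subsetneq G\times G$ or, when $B=G\times G$, forces $\psi$ to take values in a proper subgroup $H\subsetneq G\times G$; M\"obius inversion over subgroups reduces the contribution to a sum over proper subgroups $H\subseteq G\times G$. For each such $H$, a Goursat-type analysis identifies the axis subgroups $H\cap(G\times\{1\})$ and $H\cap(\{1\}\times G)$ with subgroups $N_1\trianglelefteq\rho_1(H)$ and $N_2\trianglelefteq\rho_2(H)$; the minimum-weight elements of $H$ with respect to $w_H$ are exactly the axis elements of minimal weight, and their $K$-conjugacy classes number at most $\#\{\mathrm{min\text{-}weight}\ K\text{-classes in }N_1\}+\#\{\mathrm{min\text{-}weight}\ K\text{-classes in }N_2\}\le 2(b_{\inv}(K,G)-\beta)$ by the definition of $\beta$, after passing to the normal closure in $G$ when $H$ is not subdirect. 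Applying Corollary \ref{cor:MB_Taub} to each $H$ then yields $O(X^{2/a_{\inv}(G)}(\log X)^{2b_{\inv}(K,G)-2\beta-1})$, preserved upon summing over finitely many $H$.

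For the final claim, suppose $G=\langle g\in G:w(g)=a_{\inv}(G)\rangle$ and $\beta=0$. Then there exists a proper normal subgroup $N\normal G$ containing at least one minimal-weight element such that every minimal-weight $K$-conjugacy class of $G$ lies in $N$; hence $\langle g\in G:w(g)=a_{\inv}(G)\rangle\subseteq N\subsetneq G$, contradicting the hypothesis, so $\beta\ge 1$. The main obstacle will be the Goursat bookkeeping for case (a) of Lemma \ref{lem:nonepiprod}, where $L$ itself fails to be subdirect and the axis subgroups are only normal in proper subgroups $\rho_i(L)\subsetneq G$ rather than in $G$; one expects this contribution to be asymptotically negligible because randomly generated local data at $\Omega(X/\log X)$ places fills $G$ through each projection with overwhelming probability, but verifying this via inclusion-exclusion over subgroups $H$ with $\rho_i(H)\subsetneq G$ requires uniform control of the Malle-Bhargava sum restricted to $\psi$ whose local image at every place in $S_X$ lies in $H$, and checking that the resulting $K$-conjugacy count after passing to the normal closure is still bounded by $2(b_{\inv}(K,G)-\beta)$.
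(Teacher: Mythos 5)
Your part (a) follows essentially the same route as the paper: decompose the mixed moment over subdirect products $B\le G^j$ via the joint map, introduce the product invariant $\inv_B$ with weight $w_B(b)=\sum_i w(\rho_i b)$, observe $a_{\inv_B}(B)\ge a_{\inv}(G)$, relax the individual bounds to $|\inv_B(\psi)|\le X^j$, and apply Lemma \ref{lem:originalmoment} and Corollary \ref{cor:MB_Taub}. That part, and your proof that $\beta\ge 1$ when $G$ is generated by minimal weight elements, are correct.

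Part (b) has a genuine gap, and it is exactly the one you flag at the end. Your bound on the number of minimal-weight $K$-conjugacy classes rests on the definition of $\beta$, which only applies to \emph{proper normal subgroups of $G$}. When the subgroup $D=\langle(\phi_{1,p}\times\phi_{2,p})(G_{K_p})\rangle$ falls under case (a) of Lemma \ref{lem:nonepiprod} (some projection $\rho_i(D)\subsetneq G$), your axis subgroups $N_i$ are only normal in $\rho_i(D)$, and passing to the normal closure in $G$ does not help: the normal closure of $N_i$ in $G$ can be all of $G$, in which case no bound of the form $b_{\inv}(K,G)-\beta$ is available, and in any event there is no reason the count of minimal-weight classes after taking closures stays below $2(b_{\inv}(K,G)-\beta)$. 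Your fallback --- that local data at $\Omega(X/\log X)$ places generates $G$ through each projection ``with overwhelming probability'' --- is the right intuition but is not a proof, and the inclusion-exclusion you sketch is not carried out.

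The paper closes this gap differently, and the mechanism is worth internalizing. The mixed-moment formula already restricts the outer sum to \emph{subdirect} $H\le G^2$ (each $\rho_i|_H$ must be onto $G$ because the coordinate maps $\phi_i$ are required to be epimorphisms onto $G$). The membership condition $\im(\varphi)\le D$ is then encoded as a family of local conditions $\Sigma_{H,D,p}=\{f\in\Sigma_p^2\cap\Hom(G_{K_p},H):f(I_p)\le D\}$, and one splits on whether $H\subseteq D$. If $H\not\subseteq D$ --- which automatically covers your problematic case (a), since $\rho_i(D)\ne G=\rho_i(H)$ forces $H\not\subseteq D$ --- the family $\Sigma_{H,D}$ fails to contain all unramified homomorphisms at every place, i.e.\ it is \emph{nonadmissible}, and Lemma \ref{lem:originalmoment}(b) bounds the whole contribution by $O(r^{-X/\log X})$. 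No Goursat bookkeeping for non-subdirect subgroups is ever needed. If $H\subseteq D$, then necessarily $\iota_i(G)\not\subseteq D$ for both $i$, so the kernels $H_i=\iota_i^{-1}(\ker\rho_{3-i}|_H)$ are proper (contained in $\iota_i^{-1}(D)\ne G$) and normal in $G$ (because $H$ is subdirect), and the $\beta$-bound applies legitimately. You should restructure your case analysis along these lines; as written, the estimate in part (b) is not established.
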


Similar results to Lemma \ref{lem:mixedmoment}(b) exist for the $2k^{\rm th}$ mixed moments, in line with \cite[Theorem 1.5]{alberts2022}, but they will not be necessary for the proof of Theorem \ref{thm:muK_Malle_general}.

Lemma \ref{lem:mixedmoment} is the most technical result of this paper, but is ultimately a consequence of Lemma \ref{lem:nonepiprod}. The primary technique is to use \cite[Lemma 5.3]{alberts2022} to bound the mixed moments above by a sum of first moments. The content of \cite[Lemma 5.3]{alberts2022} is as follows: Let $C$ be a category in which every morphism decomposes uniquely (up to isomorphism) as the composition of an epimorphism with a monomorphism. If $G_1,...,G_j$ are objects in $C$ for which $G_1\times\cdots \times G_{j}$ exists and has finitely many subobjects, then
\[
M_{(G_1,G_2,...,G_j)}^{(j)} = \sum_{\substack{\iota:H\hookrightarrow G_1\times \cdots \times G_j\\ \rho_i\iota\text{ is an eipmorphism}}} M_H,
\]
where the sum is over all subobjects $H$ on which each projection map restricts to an epimorphism on $H$. In particular, this implies $M_{(G_1,G_2,...,G_j)}^{(j)}<\infty$.

Many objects in the category ${\rm Grp}(K)$ have these properties.
\begin{itemize}
\item The product between elements with local conditions at the same set of places is given by
\[
(G,S,\phi) \times (H,S,\varphi) = (G\times H, S, \phi\times \varphi).
\]
The product need not exist when the sets of places differ, but this will be sufficient for our purposes.
\item A morphism $\pi:(G,S,\phi) \to (H,S',\varphi)$ decomposes as
\[
(G,S,\phi) \twoheadrightarrow (\im(\pi), S', \varphi) \hookrightarrow (H,S',\varphi),
\]
in the same way that morphisms of groups decompose.
\end{itemize}

Using this result, we will bound the quantities in Lemma \ref{lem:mixedmoment} above by the integrals of different admissible invariants. Lemma \ref{lem:originalmoment} and Corollary \ref{cor:MB_Taub} will then give the asymptotic bounds. We get the stronger bounds in part (b) by showing that avoiding objects in $E(2,M)$ can be translated into avoiding certain ramification behaviors.

\begin{proof}[Proof of Lemma \ref{lem:mixedmoment}(a)]
Firstly, we remark that
\begin{align*}
\int_{{\rm Grp}(K)^{2k}} \inv_X^{G,S,\Sigma}\ dM^{(j)}(dM)^{2k-j} &= \left(\int_{{\rm Grp}(K)^{j}} \inv_X^{G,S,\Sigma}\ dM^{(j)}\right)\left(\int_{{\rm Grp}(K)} \inv_X^{G,S,\Sigma}\ dM\right)^{2k-j}.
\end{align*}
Lemma \ref{lem:originalmoment} and Corollary \ref{cor:MB_Taub} bounds
\[
\int_{{\rm Grp}(K)} \inv_X^{G,S,\Sigma}\ dM \ll X^{\frac{1}{a_{\inv}(C)} + \epsilon},
\]
so it suffices to consider the $M^{(j)}$ factor on its own. Given that $\inv_X^{G,S,\Sigma}$ is supported on objects of the form $(G,S(X),\phi)$ for $S(X) = S\cup \{|p|\le X\}$, we can write the integral as
\begin{align*}
\int_{{\rm Grp}(K)^{j}} \inv_X^{G,S,\Sigma}\ dM^{(j)} &= \sum_{(G,S(X),\phi_i)\in {\rm Grp}(K)^{j}} \left(\prod_{i=1}^{j}\inv_X^{G,S,\Sigma}(G,S(X),\phi_i)\right) M_{((G,S(X),\phi_1),...,(G,S(X),\phi_j))}^{(j)}.
\end{align*}
Write $\varphi=\phi_1\times \cdots \times\phi_j$ so that $\rho_i\varphi = \phi_i$. We then use \cite[Lemma 5.3]{alberts2022} to bound the mixed moment $M^{(j)}$ by
\begin{align*}
M_{((G,S(X),\rho_1\varphi),...,(G,S(X),\rho_j\varphi))}^{(j)} &= \sum_{\substack{(H,S(X),\psi)\le (G^{j},S(X),\varphi)\\\forall i, \rho_i(H) = G}} M_{(H,S(X),\psi)}\\
&=\sum_{\substack{H\le G^{j}\\\im\left(\varphi\right)\le H\\\forall i, \rho_i(H) = G}} M_{(H,S(X),\varphi)}
\end{align*}
For each $H\le G^j$ with $\rho_i(H) =G$ for all $i=1,2,...,j$, the universal property of direct products gives a bijection between the objects $(G,S(X),\phi_i)\in {\rm Grp}(K)^{j}$ for which $\im\left(\prod_{i=1}^j \phi_i\right)\le H$ and the set of objects $(H,S(X),\varphi)\in {\rm Grp}(K)$. Thus, we can pull the sum over $H\le G^{j}$ to the outside to produce
\begin{align*}
&\sum_{\substack{H\le G^{j}\\\forall i,\rho_i(H) = G}}\sum_{(H,S(X),\varphi)\in {\rm Grp}(K)} \left(\prod_{i=1}^{j}\inv_X^{G,S,\Sigma}(G,S(X),\rho_i\varphi)\right)M_{(H,S(X),\varphi)}\\
&=\sum_{\substack{H\le G^{j}\\\forall i,\rho_i(H) = G}}\sum_{\substack{\varphi:F_{K,S(X)} \to H\\\forall i,\ \rho_i\varphi\in \Sigma\\\forall i,\ |\inv(\rho_i\varphi)| \le X}} M_{(H,S(X),\varphi)}.
\end{align*}
Given that there are $|H|$ possible unramified maps $\phi_p:G_{K_p}\to H$ for each finite place $p$, we can reframe the finite moments for a larger set of places
\begin{align*}
M_{(H,S(X),\varphi)} &= |H^{\rm ab}[|\mu(K)|]|^{-1}|H|^{-|S(X)|+1}\\
&=|H|^{|S(X^{j})| - |S(X)|}|H^{\rm ab}[|\mu(K)|]|^{-1}|H|^{-|S(X^{j})|+1}\\
&=\sum_{\substack{\psi:F_{K,S(X^{j})} \to H\\\psi\text{ unram at }p\not\in S(X)\\\psi|_{S(X)}=\varphi}}M_{(H,S(X^{j}),\psi)}.
\end{align*}
Plugging his into our integral, we can bound
\begin{align*}
&\sum_{\substack{H\le G^{j}\\\forall i,\rho_i(H) = G}}\sum_{\substack{\varphi:F_{K,S(X)} \to H\\\forall i,\ \rho_i\varphi\in \Sigma\\\forall i,\ |\inv(\rho_i\varphi)| \le X}} \sum_{\substack{\psi:F_{K,S(X^{j})} \to H\\\psi\text{ unram at }p\not\in S(X)\\\psi|_{S(X)}=\varphi}}M_{(H,S(X^{j}),\psi)}\\
&=\sum_{\substack{H\le G^{j}\\\forall i,\rho_i(H) = G}}\sum_{\substack{\psi:F_{K,S(X^{j})} \to H\\\forall i,\ \rho_i\psi\in \Sigma\\\forall i,\ |\inv(\rho_i\psi)| \le X}} M_{(H,S(X^{j}),\psi)}\\
&\le \sum_{\substack{H\le G^{j}\\\forall i,\rho_i(H) = G}}\sum_{\substack{\psi:F_{K,S(X^{j})} \to H\\\forall i,\ \rho_i\psi\in \Sigma\\\ \prod_{i=1}^{j}|\inv(\rho_i\psi)| \le X^{j}}} M_{(H,S(X^{j}),\psi)}.
\end{align*}
For each $H\le G^j$, define an invariant $\inv_H:\prod_p \Hom(G_{K_p},H) \to I_K$ by
\[
\inv_H(\pi) = \prod_{i=1}^{j} \inv(\rho_i(\pi)).
\]
This is an admissible invariant with weight function $w_H(g_1,...,g_{j}) = w(g_1) + \cdots +w(g_{j})$, inheriting the necessary properties from $\inv$. We additionally define $\Sigma^{j} = (\Sigma_p^j)$ for the family of local conditions $\Sigma_p^j\subseteq\Hom(G_{K_p},H)$ determined by $\rho_i(\Sigma_p^j) = \Sigma_p$ for projection to each coordinate $i=1,...,j$. The sum simplifies then simplifies to 
\begin{align*}
& \sum_{\substack{H\le G^{j}\\\forall i,\rho_i(H) = G}}\sum_{(H,S(X^{j}),\psi)\in {\rm Grp}(K)} (\inv_H)_{X^{j}}^{H,S,\Sigma^j}(H,S(X^{j}),\psi) M_{(H,S(X^{j}),\psi)}\\
&= \sum_{\substack{H\le G^{j}\\\forall i,\rho_i(H) = G}} \int_{{\rm Grp}(K)} (\inv_H)_{X^{j}}^{H,S,\Sigma^j}\ dM.
\end{align*}
The result then follows from Lemma \ref{lem:originalmoment} and Corollary \ref{cor:MB_Taub}, noting that the minimal weight necessarily satisfies
\[
a_{\inv_H}(H) \ge a_{\inv}(G)
\]
and that, in the context of Corollary \ref{cor:MB_Taub}, $C = G$ for the family $\Sigma^j$. Thus
\begin{align*}
\int_{{\rm Grp}(K)^{j}} \inv_X^{G,S,\Sigma^j}\ dM^{(j)} &\le \sum_{\substack{H\le G^{j}\\\forall i,\rho_i(H) = G}} \int_{{\rm Grp}(K)} (\inv_H)_{X^{j}}^{H,S,\Sigma^j}\ dM\\
&\ll \sum_{\substack{H\le G^{j}\\\forall i,\rho_i(H) = G}}(X^{j})^{\frac{1}{a_{\inv}(G)} + \epsilon}\\
&\ll X^{\frac{j}{a_{\inv}(G)} + \epsilon}.
\end{align*}
\end{proof}

Lemma \ref{lem:mixedmoment}(b) is more stringent, and so we cannot use as loose of bounds as in part (a). Instead, Lemma \ref{lem:nonepiprod} will play a major role in controlling the number of minimum index elements that can appear in the local data.
\begin{proof}[Proof of Lemma \ref{lem:mixedmoment}(b)]
By Lemma \ref{lem:nonepiprod}, we know that $((G_1,S,\phi_1),(G_2,S,\phi_2))\in {\rm Grp}(K)^{2}\setminus E(2,M)$ implies that the group
\[
D = \langle (\phi_1\times \phi_2)(G_{K_p}):p\in S\rangle
\]
either has one of $\rho_i(D)\ne G_i$, or $\iota_i(G_i) \not\subseteq D$ for each $i$. We will use this to refine the proof of part (a) to bound the integral.

Suppose first that $j=2$. Similar to part (a), we compute
\begin{align*}
\int_{{\rm Grp}(K)^{2}\setminus E(2,M)} \inv_X^{G,S,\Sigma}\ dM^{(2)}&\le \sum_{\substack{H\le G^2\\\forall i,\ \rho_i(H) =G}}\sideset{}{^*}\sum_{D\le G^2}\sum_{\substack{(H,S(X),\varphi)\in {\rm Grp}(K)\\ \im(\varphi)\le D}} \prod_{i=1}^2\inv_X^{G,S,\Sigma}(G,S(X),\rho_i\varphi)M_{(H,S(X),\varphi)}\\
&=\sum_{\substack{H\le G^2\\\forall i,\ \rho_i(H) =G}}\sideset{}{^*}\sum_{D\le G^2}\sum_{\substack{\varphi:F_{K,S(X)} \to H\\\im(\varphi)\le D\\ \varphi\in \Sigma\\ |\inv(\rho_i\varphi)| \le X}} M_{(H,S(X),\rho_i\varphi)},
\end{align*}
where the $*$ on the middle sum indicates it is over those $D$ described by Lemma \ref{lem:nonepiprod}. We bound this via each of the following facts:
\begin{itemize}
\item We know that
\begin{align*}
M_{(H,S(X),\varphi)}M_{(G,S(X),\rho_2\varphi)} &= \sum_{\substack{\psi:F_{K,S(X^2)} \to H\\ \psi\text{ unram at }p\not\in S(X)\\ \psi|_{S(X)} = \varphi}} M_{(H,S(X^2),\psi)}
\end{align*}
similar to the equality used in part (a).
\item Let $\Sigma_{H,D} = (\Sigma_{H,D,p})$ be the family of local conditions $\Sigma_{H,D,p}\subseteq \Hom(G_{K_p},H)$ for which
\[
\Sigma_{H,D,p} = \{f\in \Sigma_p^2\cap \Hom(G_{K_p},H) : f(I_p)\le D\}.
\]
\item The invariant $\inv_{H}:\prod_p\Hom(G_{K_p},H) \to I_K$ defined as in part (a) by
\[
\inv_{H}(\pi) = \inv(\rho_1\pi)\inv(\rho_2\pi)
\]
is necessarily admissible, inheriting the property from $\inv$, with weight function $w_{H}(g_1,g_2) = w(g_1)+w(g_2)$. 
\end{itemize}
Putting these together as in part (a) gives the bound
\begin{align*}
\int_{{\rm Grp}(K)^{2}\setminus E(2,M)} \inv_X^{G,S,\Sigma}\ (dM)^2&\le \sum_{\substack{H\le G^2\\\forall i,\ \rho_i(H) =G}}\sideset{}{^*}\sum_{D\le G^2}\sum_{\substack{\psi:F_{K,S(X^2)} \to H\\\psi\in \Sigma_{H,D}\\ |\inv_{H}(\psi)| \le X^2}} M_{(H,S(X^2),\psi)}\\
&=\sum_{\substack{H\le G^2\\\forall i,\ \rho_i(H) =G}}\sideset{}{^*}\sum_{D\le G^2}\sum_{(H,S(X^2),\psi)\in {\rm Grp}(K)} (\inv_{H})_{X^2}^{H,S,\Sigma_{H,D}}(H,S(X^2),\psi) M_{(H,S(X^2),\psi)}\\
&=\sum_{\substack{H\le G^2\\\forall i,\ \rho_i(H) =G}}\sideset{}{^*}\sum_{D\le G^2}\int_{{\rm Grp}(K)} (\inv_{H})_{X^2}^{H,S,\Sigma_{H,D}}\ dM.
\end{align*}

We consider the summands separately. Suppose first that $H\not\subseteq D$. Then $\Sigma_{H,D}$ is necessarily nonadmissible as $\Sigma_{H,D,p}\subseteq \Hom(G_{K_p},D)$ while $\Hom_{\rm ur}(G_{K_p},H)\not\subseteq \Hom(G_{K_p},D)$ for each place $p$. Thus, Lemma \ref{lem:originalmoment}(b) implies that each such summand is bounded by a decaying function
\[
\int_{{\rm Grp}(K)} (\inv_{H})_{X^2}^{H,S,\Sigma_{H,D}}\ dM = O\left(r^{-X/\log X}\right) = o(1).
\]

Now suppose that $H\subseteq D$. Then $\Sigma_{H,D,p} = \Hom(G_{K_p},H)$ for all but finitely many places $p$ imposes no local conditions at all. To indicate this, we write $\inv_{X^2}^{H,S,\Sigma_{H,D}}=\inv_{X^2}^{H,S}$. We now apply Lemma \ref{lem:originalmoment} and Corollary \ref{cor:MB_Taub}. As in part (a), $a_{\inv_{H}}(H) \ge a_{\inv}(G)$ by construction. If this inequality is strict, then we are done.

If $a_{\inv_H}(H) = a_{\inv}(G)$, we need to control the power of $\log X$. The only nonidentity elements of $H\le G^2$ that can have minimal weight agreeing with the minimal weight of $G$ are those of the form $(g,1)$ or $(1,g)$ for $w(g) = a_{\inv}(G)$, because the weight function is given by $w_H(g_1,g_2) = w(g_1) + w(g_2)$.

Let $H_1\le G$ be the subgroup such that $H_1\times 1 = \ker \rho_2|_{H} = \iota_1(G) \cap H$, and similarly for $H_2\le G$. That these groups are given by kernels implies $H_i\normal G$. Lemma \ref{lem:nonepiprod} together with the containment $H\subseteq D$ implies $H_i \subseteq \iota_i^{-1}(D)\ne G$, so they are proper normal subgroups. We then compute
\begin{align*}
b_{\inv_{H}}(K,H) &= \#\{K\text{-conjugacy classes in }H\text{ of minimal weight}\}\\
&=\sum_{i=1}^2 \#\{K\text{-conjugacy classes in }H_i\text{ of minimal weight}\}\\
&\le 2(b_{\inv}(K,G) - \beta)\\
&= 2b_{\inv}(K,G) - 2\beta.
\end{align*}
Part (b) for $j=2$ then follows. The $j=1$ case proceeds via the same argument, where only the summand $H = G^2$ occurs in the bound.

If we suppose that $G = \langle g\in G : w(g) = a_{\inv}(G)\rangle$, any proper normal subgroup $N\normal G$ cannot contain all the minimal weight elements. Thus, $G\setminus N$ contain at least one minimal weight element. Additionally, the fact that $N$ is a normal subgroup implies that both $N$ and $G\setminus N$ are closed under conjugation and invertible powers, and so are unions of $K$-conjugacy classes. Thus. there exists at least one $K$-conjugacy class of minimal weight in $G\setminus N$. As $N$ was arbitrary, this implies $\beta \ge 1$.
\end{proof}

\subsection{Proving the counting results}
With this result in hand, we can now prove Theorem \ref{thm:muK_Malle_general} (of which Theorem \ref{thm:prob1intro} is a special case).

\begin{proof}[Proof of Theorem \ref{thm:muK_Malle_general}]
The proof of Theorem \ref{thm:muK_Malle_general}(i) follows from \cite[Corollary 1.4]{alberts2022} and Lemma \ref{lem:mixedmoment}(a). \cite[Corollary 1.4]{alberts2022} states that
\[
N(\mathscr{G},\inv_X^{G,S,\Sigma}) = o\left(\max_{0\le j\le k}X^{\frac{1+\epsilon}{2k}}\left(\int_{{\rm Grp}(K)^{2k}} \inv_X^{G,S,\Sigma}\ dM^{(j)}(dM)^{2k-j}\right)^{1/2k}\right).
\]
almost surely. Applying the bound Lemma\ref{lem:mixedmoment}(a), this implies
\[
N(\mathscr{G},\inv_X^{G,S,\Sigma}) = o\left(\max_{0\le j\le k}X^{\frac{1+\epsilon}{2k} + \frac{1}{a_{\inv}(G)} + \epsilon}\right)
\]
almost surely. Replacing $\epsilon$ with $\epsilon/2$ and taking $k$ sufficiently large concludes the proof.

We can prove Theorem \ref{thm:muK_Malle_general}(ii,iii) by \cite[Theorem 1.3(ii,iii)]{alberts2022} respectively in conjunction with \cite[Theorem 1.4]{alberts2022}. \cite[Theorem 1.4]{alberts2022} states that
\[
\int_{{\rm proGrp}(K)} \left\lvert N(\mathscr{G},\inv_X^{G,S,\Sigma}) - \int_{{\rm Grp}(K)} \inv_X^{G,S,\Sigma}\ dM\right\rvert^2 d\mu_K^{\rm MB}(\mathscr{G}) \ll \max_{j\in \{1,2\}}\int_{{\rm Grp}(K)^2\setminus E(2,M)} \inv_X^{G,S,\Sigma}\ dM^{(j)}(dM)^{2-j},
\]
which by Lemma \ref{lem:mixedmoment} is bounded by
\[
X^{1/a_{\inv}(G)}(\log X)^{2b_{\inv}(K,G) - 2\beta - 1}.
\]
If $G$ is generated by mimimal weight elements, we necessarily have $\beta \ge 1$.

Thus, by Corollary \ref{cor:MB_Taub}
\[
\int_{{\rm proGrp}(K)} \left\lvert N(\mathscr{G},\inv_X^{G,S,\Sigma}) - \int_{{\rm Grp}(K)} \inv_X^{G,S,\Sigma}\ dM\right\rvert^2 d\mu_K^{\rm MB}(\mathscr{G}) \ll \frac{1}{(\log X)^{2\beta - 1}} \left(\int_{{\rm Grp}(K)} \inv_X^{G,S,\Sigma}\ dM\right)^2.
\]
It follows from $\beta \ge 1$ that $\frac{1}{(\log X)^{2\beta - 1}}\le \frac{1}{(\log X)} \to 0$. Thus \cite[Theorem 1.3(ii)]{alberts2022} directly proves Theorem \ref{thm:muK_Malle_general}(ii).

In the case that all proper normal subgroups $N\normal G$ either contain no minimal weight elements or have trivial intersection with at least two $K$-conjugacy classes of minimal weight, we necessarily have $\beta \ge 2$. Thus
\begin{align*}
\int_{{\rm proGrp}(K)} \left\lvert N(\mathscr{G},\inv_X^{G,S,\Sigma}) - \int_{{\rm Grp}(K)} \inv_X^{G,S,\Sigma}\ dM\right\rvert^2 d\mu_K^{\rm MB}(\mathscr{G}) &\ll \frac{1}{(\log X)^3} \left(\int_{{\rm Grp}(K)} \inv_X^{G,S,\Sigma}\ dM\right)^2\\
&\ll \frac{\left(\int_{{\rm Grp}(K)} \inv_X^{G,S,\Sigma}\ dM\right)^2}{\left(\log \int_{{\rm Grp}(K)} \inv_X^{G,S,\Sigma}\ dM\right)^3}.
\end{align*}
The last inequality is true by $\int_{{\rm Grp}(K)} \inv_X^{G,S,\Sigma}\ dM\ll X^{1/a_{\inv}(G)+\epsilon}\ll X^{2}$ following from Lemma \ref{lem:originalmoment} and Corollary \ref{cor:MB_Taub}. The function $\psi(X) = (\log X)^3$ is nondecreasing and satisfies $\sum \frac{1}{n(\log n)^3} < \infty$, so \cite[Theorem 1.3(iii)]{alberts2022} directly proves Theorem \ref{thm:muK_Malle_general}(iii).
\end{proof}

\section{Making predictions for number field counting}\label{sec:knownissues}

Our results suggest that the Vast Counting Heuristic described by the author in \cite[Heuristic 1.7]{alberts2022} applies to Malle's conjecture, as long as we expect the absolute Galois group $G_K$ to be ``typical" among groups with local data. Of course, it is well known that Malle's conjecture is false as stated - Kl\"uners provided the first counter example in $C_3\wr C_2\subseteq S_6$ for which Malle's predicted $b$-invariant is too small \cite{kluners2005}. Kl\"uners' counter example is witnessing some atypical behavior for $G_\Q$ among groups with local data distributed according to $\mu_K^{\rm MB}$, specifically the behavior that $\Gal(\Q(\zeta_3)/\Q)$ is a quotient of $G_\Q$.

Lemma \ref{lem:originalmoment} can be understood as recognizing $\mu_K^{\rm MB}$ as a random group version of the Malle-Bhargava heuristic \cite{bhargava2007,wood2019}. The Malle-Bhargava heuristic states that the growth rate of Malle's counting function can be read off the Malle-Bhargava local series
\[
\frac{|G|}{|G^{\rm ab}[|\mu(K)|]|}\prod_p \left(\frac{1}{|G|}\sum_{f\in \Hom(G_{K_p},G)} p^{-(\nu_p\disc(f))s}\right),
\]
with possibly some constant out front. The convergent Euler product representation for $c(K,G)$ can immediately be recognized as coming from this series. The Malle-Bhargava principle, like Malle's conjecture, is of course wrong in some cases. By analogy, this means we expect our random group with local data $\mu_K^{\rm MB}$ to miss certain behaviors and produce incorrect predictions in similar cases to the Malle-Bhargava principle (like with Kl\"uners' counter-example).

It would be useful to compile a list of known ``atypical" behaviors for the absolute Galois group, that is behaviors we know can break the Malle-Bhargava principle. To the author's knowledge, the following list represents the sources of all \emph{currently known} issues with the Malle-Bhargava principle (including the value of the leading constant):
\begin{enumerate}
\item Cases when $G$ is \textbf{not generated by minimal index elements}. Theorem \ref{thm:prob1intro} does not even cover such cases, and with good reason - atypical behavior tends to be exacerbated by such orderings. One sees this in \cite{cohen-diaz-y-diaz-olivier2002} for $D_4\subseteq S_4$ and \cite{fouvry-koymans2021} for ${\rm Heis}_3\subseteq S_9$, where the leading constant is given explicitely by a convergent infinite sum of Euler product rather than just one. Indeed, even for abelian groups the leading constant may be a finite sum of convergent Euler products \cite{wright1989,wood2009}.

\item \textbf{Property E}, as called in \cite{liu-wood-zureick-brown2019}, which states that a central embedding problem for $G_K$ is solvable if and only if the corresponding local embedding problems are solvable. This is not generically true for $\mu_K^{\rm MB}$, although it is known that this behavior is important for counting number fields. In class group statistics and related statistics of unramified objects, this property is related to the size of certain $|\mu(K)|$-torsion in the Schur multiplier of $G$ and can result in different constants out front. Some authors avoid this issue by considering groups for which the corresponding Schur multiplier is trivial \cite{liu-wood-zureick-brown2019,boston-bush-hajir2017}, while others have computed what the contribution of the Schur multiplier is expected to be \cite{rubinstein-salzedo2014,wood2019,sawin-wood2023} under various guises.

\item \textbf{Gaining extra roots of unity}. Kl\"uners' counter example shows that the $G$-extensions $L/K$ for which $\mu(L) \ne \mu(K)$ can accumulate beyond Malle's predicted growth rate \cite{kluners2005}. Malle's predictions are corrected for this behavior by T\"urkelli \cite{turkelli2015,alberts2021}, although the leading constant is not explicitly addressed in these corrections.

\item \textbf{The Grunwald problem}. Not every local condition can occur in a global extension of number fields. This was first noticed by Wang in correcting the Grunwald-Wang theorem \cite{wang1950}: there are no $C_8$-extensions of $\Q$ which are totally inert at $2$. Missing local conditions should result in missing terms in the leading Euler product, as seen for abelian groups in \cite{wood2009}. Identifying when the Grunwald problem has a positive solution is currently an open problem, see \cite{Neukirch1973,Harari2007,DLAN17,Motte18}.

\item \textbf{Fair versus unfair counting functions}. Even when Grunwald-Wang counter examples can be avoided, the leading constant for counting abelian extensions need not agree with the convergent Euler product in Theorem \ref{thm:prob1intro}. A result of Wood shows that these leading constants agree if the extensions are ordered by a so-called fair counting function \cite{wood2009} (Wood's main results say much more than this, but we restrict our attention only to the leading constant).

Wood defines the notion of a fair counting function in general for abelian extensions, but it is not immediately clear how this definition should generalize to nonabelian groups. The author personally considers this the most mysterious obstruction for counting nonabelian extensions. The primary example of a fair counting function on abelian groups is the product of ramified primes ordering. One expects this ordering to be fair for nonabelian groups as well, but the author is not aware of any proposals for a general definition of ``fair" in this context.
\end{enumerate}

We do not attempt to solve these issues in this paper. A complete fix accounting for any one of these behaviors would represent immense progress in the study of number field counting, and will be the subject of continued research of the author. We contend that one of the major difficultlies in surpassing these obstructions lies in their intersections. To some extent, each one is affected by the presence of roots of unity. It will often be the case that a finite group $G$ witnesses several of these behaviors simultaneously, making it difficult to study them as individuals.

Instead, we briefly discuss how $\mu_K^{\rm MB}$ interacts with these behaviors. This verifies long-standing beliefs in the field that the Malle-Bhargava principle for the product of ramified primes ordering should be ``essentially correct away from roots of unity."

\subsection{Agreement with known cases in the constant}

Theorem \ref{thm:muK_Malle_general} agrees with the counting results proven by Wood \cite{wood2009} for abelian extensions ordered by a fair counting function, as long as the Grunwald problem has a positive solution for all local restrictions for the Galois group. This includes all odd order abelian groups. Theorem \ref{thm:muK_Malle_general} otherwise disagrees with Wood's results, showing that Wood's counting results witness the ``Grunwald problem" and ``fair counting function" obstructions.

Moreover, Theorem \ref{thm:prob1intro} agrees with the asymptotic growth rate for $S_n$-extensions of $\Q$ for $n=3,4,5$, and the predicted growth rate for $n\ge 6$, down to the constant factor of $1/2$ in front of the Malle-Bhargava local series \cite{bhargava2007}. This comes from the factor
\[
\frac{1}{|S_n^{\rm ab}[|\mu(\Q)|]|} = \frac{1}{|C_2[2]|} = \frac{1}{2}
\]
appearing in the constant out front.

\subsection{$G$ not generated by minimal index elements}

This obstruction is known to have an affect on number field counting. The leading constant for counting $D_4\subseteq S_4$ extensions is a convergent sum of Euler products \cite{cohen-diaz-y-diaz-olivier2002}, which is significantly different than the leading constant in Theorem \ref{thm:prob1intro}. This type of behavior is common, occurring also for $C_2\wr H$-extensions \cite{kluners2012} and ${\rm Heis}_3\subseteq S_9$ \cite{fouvry-koymans2021}. The root cause of these issues is that, when $G$ is not generated by minimal index elements, there exist subfield that occur in a positive proportion of $G$-extensions. Despite changing the behavior of the leading constant, having a subfield occur in a positive proportion of $G$-extensions makes it \emph{easier} to determine the asymptotic growth rate. This is the subject of forthcoming work of the author with Lemke Oliver, Wang, and Wood, where the asymptotic growth rate is determined for Malle's counting function for a number of groups with are not generated by minimal index elements (including cases which agree \emph{or disagree} with Malle's Conjecture).

When $G$ is not generated by minimal index elements, one has $\beta = 0$ in Lemma \ref{lem:mixedmoment}(b). As a consequence, our current methods are not able to produce an asymptotic with probability $1$.

We claim that this should be expected for the random group with local data $\mu_K^{\rm MB}$ just from its structure. The Law of Large Numbers results in \cite{alberts2022} taking advantage of proving enough independence (or uncorrelatedness) between the random variables $\#{\rm Epi}(\mathscr{G},(G,S,\phi))$. In a sense, this type of ``independence" is a model for independence of the Frobenius distribution between two $G$-extensions $L_1/K$ and $L_2/K$. We know by Chebotarev density that the distributions of Frobenius in $L_1$ and $L_2$ are independent if and only if the extensions are disjoint, i.e. $L_1\cap L_2 = K$ is trivial. In this setting, \cite[Theorem 1.4]{alberts2022} can be interpreted as requiring that 100\% of pairs of $G$-extensions have trivial intersection $L_1\cap L_2 = K$.

Now, suppose $N\normal G$ is a proper normal subgroup of $G$ containing all the minimal index elements. For a fixed $G/N$-extension $F/K$, the twisted Malle's conjecture \cite{alberts2021} uses the Malle-Bhargava principle to predict that a positive proportion of $G$-extensions $L/K$ have fixed field $L^N = F$. This is in direct contradiction to what we need for \cite[Theorem 1.4]{alberts2022}, showing that there is no hope of Theorem \ref{thm:prob1intro} being true as stated in these cases.

This argument shows that, in some sense, $\mu_{K}^{\rm MB}$ is already seeing this obstruction. We require different methods to study the counting function in such cases, but one no longer expects a result that looks like Theorem \ref{thm:prob1intro}.

\subsection{Property E}

Property E was so named by Liu--Wood--Zureick-Brown in \cite{liu-wood-zureick-brown2019} where the ``E" stands for ``Extensions".  It is equivalent to a certain local-to-global property: given a central short exact sequence $1\to Z\to E\to G\to 1$, the embedding problem
\[
\begin{tikzcd}
&&&G_K \dar[two heads]{\pi}\dlar[dashed]\\
1 \rar & Z \rar & E \rar & G \rar & 1
\end{tikzcd}
\]
is solvable if and only if the corresponding local embedding problems
\[
\begin{tikzcd}
&&&G_{K_p} \dar[two heads]{\pi|_{G_{K_p}}}\dlar[dashed]\\
1 \rar & Z \rar & E \rar & G \rar & 1
\end{tikzcd}
\]
are solvable for each place $p$ of $K$. This forces the existence of certain $E$-extensions given the existence of certain $G$-extensions, clearly resulting in some effects for number field counting problems. No examples of this property affecting Malle's counting function specifically currently exist in the literature, but the author is aware of an example in forthcoming work of Koymans--Pagano where property E is used to show Malle's predicted power of $\log X$ for the product of ramified primes ordering is incorrect for a certain nilpotency class $2$ group.

By replacing the absolute Galois group $G_K$ with a group with local data $\mathscr{G}\in {\rm proGrp}(K)$ in the above diagrams, we can consider property E for groups with local data. Relating the Malle-Bhargava local series directly to central embedding problems is much trickier to formulate; by considering the random group with local data $\mu_K^{\rm MB}$ we make this relationship clear.

In particular, we can ask what the probability is that property E holds with respect to $\mu_K^{\rm MB}$. One can show that this event is measurable, but at this time we are not able to compute the actual probability. Given work  on statistics of unramified objects whose leading constants disagree with the Malle-Bhargava local series, such as \cite{boston-bush-hajir2017,liu-wood-zureick-brown2019,wood2019,sawin-wood2023}, we expect that property E has probability strictly less than $1$ with respect to $\mu_K^{\rm MB}$.

We will not, however, leave this subsection empty handed. It is worth noting at this time that $\mu_K^{\rm MB}$ ``essentially" specializes to the random $\Gamma$-group defined by Liu--Wood--Zureick-Brown \cite{liu-wood-zureick-brown2019}. We consider maximal quotients of $\mathscr{G}$ of the form $H\rtimes \Gamma$, in which all ramification lies outside of $H$ and $H$ is prime to $|\mu(K)||\Gamma|$. Given any $\pi:F_{K,S}\to \Gamma$, if $\pi$ factors through $\mathscr{F}_{K,S}(r)$ then the maximal unramified extension of $\pi$ of this form is given by the group
\begin{align*}
\mathscr{F}_{K,S}(r) / \langle \ker\pi|_{I_p}\rangle &= F_{K,S} / \langle r_i\in \ker\pi\ (i=1,...,|S|+u), \ker\pi|_{I_p}\rangle\\
&= \langle \Fr_p : p\in S\rangle_{|\mu(K)||\Gamma|} \rtimes \Gamma / \langle r_i\in \ker\pi\ (i=1,...,|S|+u)\rangle\\
&= \left(\langle \Fr_p : p\in S\rangle_{|\mu(K)||\Gamma|} / \langle r_i\gamma r_i^{-1}\gamma^{-1}\ (i=1,...,|S|+u\text{ and }\gamma\in \Gamma)\rangle\right)\rtimes \Gamma
\end{align*}
If $\pi$ corresponds to a $\Gamma$-extension $L/K$, then this group is exactly equal to the random group proposed in \cite{liu-wood-zureick-brown2019} (with a semidirect product by $\Gamma$). This is where we say these random groups are ``essentially" the same. The difference is that $\pi$ which factors through $\mathscr{F}_{K,S}(r)$ merely \emph{models} a $\Gamma$-extension rather than being equal to one on the nose. This is due to a difference in setting: our random group is constructed with the goal of counting extensions with restricted local behavior, so the entire $H\rtimes \Gamma$ extensions are being modeled here. Meanwhile, Liu--Wood--Zureick-Brown construct a random group to model just the unramified $H$ piece, and varying over actual $\Gamma$-extensions (not models of $\Gamma$-extensions).

It is notable that Liu--Wood--Zureick-Brown's special relations of the form $r\gamma r^{-1}\gamma^{-1}$ falls out of the inertia data we included in the definition of a group with local data. Liu--Wood--Zureick-Brown constructed this expression for their relations by imposing property E on their random group. This suggests that the inclusion of inertia data in $\mu_K^{\rm MB}$ captures at least some part of property E.

\subsection{Root of unity behavior}

Kl\"uners gave the example $C_3\wr C_2\subseteq S_6$ for which Malle's predicted growth rate was incorrect \cite{kluners2005}. The issue was not just in the constant term, but in fact Malle's predicted power of $\log X$ is incorrect for this group. The behavior of the counting functions changes in the presence of additional roots of unity. The $C_3\wr C_2$-extensions $L/\Q$ for which $\Q(\zeta_3)\subseteq L$ are asymptotically more abundant than other $C_3\wr C_2$-extensions. The root cause is that every prime ideal of $\Q(\zeta_3)$ either divides $3$ or is $1$ mod $3$, which is not true for other quadratic extensions, allowing for more ways to ramify in the $C_3$-extension on top.

This makes the extensions by roots of unity ``exceptional" in some sense, capturing some fundamental behavior of the absolute Galois group.  This behavior was not built into the random group with local data $\mu_{K}^{\rm MB}$, and we can in fact prove that root of unity behavior occurs with probability $0$.

\begin{definition}\label{def:awayfromL}
Let $L/K$ be an extension of number fields. The subcategory ${\rm proGrp}_{\neg L}(K)$ of groups with local data away from $L$ is defined to be the subcategory of ${\rm proGrp}(K)$ whose objects satisfy
\[
\Hom(\mathscr{G},(\Gal(E/K),\phi_{E/K})) = \emptyset
\]
for each subextension $E\le L$ with $K\ne E$.
\end{definition}

Given an extension $M/K$, the corresponding Galois group with local data belongs to ${\rm proGrp}_{\neg L}(K)$ if and only if $M\cap L = K$. Groups with local data that could be the source of a Kl\"uners-style counter example would be objects in ${\rm ProGrp}(K)\setminus {\rm proGrp}_{\neg K(\zeta^e)}(K)$ for some integer $e$. As the following theorem shows, these objects occur with probability $0$ in the distribution:

\begin{theorem}
Let $L/K$ be a nontrivial finite extension of number fields and $\mu_K^{\rm MB}$ the measure given by Theorem \ref{thm:muK_construct}. Then
\[
\mu_K^{\rm MB}\left({\rm proGrp}(K)\setminus {\rm proGrp}_{\neg L}(K)\right) = 0.
\]
\end{theorem}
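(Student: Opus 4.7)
The plan is to combine the first-moment formula from Theorem~\ref{thm:muK_construct} with Markov's inequality, using a growing finite set of places $S$ to drive the bound down to zero. First I would reduce to the case that $L/K$ is Galois, since replacing $L$ by its Galois closure only enlarges the collection of nontrivial subextensions and hence only enlarges ${\rm proGrp}(K)\setminus{\rm proGrp}_{\neg L}(K)$. Once $L/K$ is Galois there are only finitely many Galois subextensions $K\ne E\le L$, so
\[
{\rm proGrp}(K)\setminus{\rm proGrp}_{\neg L}(K) \subseteq \bigcup_{E} A_E,\qquad A_E := \{\mathscr{G} : \Hom(\mathscr{G},(\Gal(E/K),\phi_{E/K}))\ne \emptyset\},
\]
and by finite subadditivity it suffices to show $\mu_K^{\rm MB}(A_E)=0$ for each such $E$.

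Next I would relate $A_E$ to events about finite truncations. Let $S_E$ be the (finite) set of places of $K$ ramified in $E$, and for any finite $S\supseteq S_E\cup P_\infty$ set
\[
B_{E,S} := \{\mathscr{G} : \#{\rm Epi}(\mathscr{G},(\Gal(E/K),S,\phi_{E/K}|_S))\ge 1\}.
\]
I claim $A_E\subseteq B_{E,S}$ for every such $S$. Indeed, given a morphism $\pi:\mathscr{G}\to(\Gal(E/K),\phi_{E/K})$ in ${\rm proGrp}(K)$, the image of $\pi$ contains every $\phi_{E/K,p}(G_{K_p})$, and these generate $\Gal(E/K)$ by Chebotarev density, so $\pi$ is surjective on underlying groups. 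The restriction of $\pi$ to local data on $S$ matches $\phi_{E/K}|_S$ by hypothesis, and at every place $p\notin S$ the identity $\pi\phi_{\mathscr{G},p}|_{I_p}=\phi_{E/K,p}|_{I_p}=1$ holds since $p\notin S_E$; hence $\pi$ descends to a genuine epimorphism in ${\rm Grp}(K)$ onto the finite truncation. The cylinder set $B_{E,S}$ is clopen in the level topology of Section~\ref{sec:localdata}, so it is Borel measurable.

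Finally, apply Markov's inequality to the non-negative integer-valued random variable $\#{\rm Epi}(\mathscr{G},(\Gal(E/K),S,\phi_{E/K}|_S))$ together with the moment formula of Theorem~\ref{thm:muK_construct}:
\[
\mu_K^{\rm MB}(A_E)\le \mu_K^{\rm MB}(B_{E,S}) \le \int\#{\rm Epi}(\mathscr{G},(\Gal(E/K),S,\phi_{E/K}|_S))\, d\mu_K^{\rm MB} = \frac{1}{|\Gal(E/K)^{\rm ab}[|\mu(K)|]|\cdot |\Gal(E/K)|^{|S\cup P_\infty|-1}}.
\]
Because $E\ne K$ forces $|\Gal(E/K)|\ge 2$, letting $S$ exhaust the places of $K$ drives the right-hand side to $0$, and therefore $\mu_K^{\rm MB}(A_E)=0$, completing the proof.

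The main obstacle is essentially bookkeeping: one must check that a morphism in ${\rm proGrp}(K)$ genuinely induces an epimorphism in ${\rm Grp}(K)$ onto each sufficiently large finite truncation (checking surjectivity, agreement with the given local data on $S$, and the unramified-outside-$S$ condition), and then confirm that the resulting cylinder events are clopen in the level topology. Both are straightforward consequences of the framework from Section~\ref{sec:localdata}; after that, the argument is simply the exponential decay of the moment in $|S|$.
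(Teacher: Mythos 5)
Your proposal is correct and follows essentially the same route as the paper's proof: bound the measure of the bad event by the first moment of $\#{\rm Epi}(\mathscr{G},(\Gal(E/K),S,\phi_{E/K}|_S))$ for a finite truncation, invoke the moment formula from Theorem \ref{thm:muK_construct}, and let $S$ exhaust the places of $K$ so the bound $|\Gal(E/K)|^{-|S\cup P_\infty|+1}\to 0$. Your union bound over the finitely many subextensions and the explicit checks of surjectivity, compatibility of local data, and measurability are just slightly more careful versions of the steps the paper carries out.
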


This means that taking probabilities conditional on ${\rm proGrp}_{\neg L}(K)$ acts as if we had taken no conditions at all. Thus, we can avoid any finite extension $L/K$ we want without losing any information from the distribution $\mu_K^{\rm MB}$. This behavior is not shared by the absolute Galois group; counting all $G$-extensions $L/K$ versus only those $G$-extensions for which $L\cap K(\zeta^e) = K$ can give different asymptotic growth rates by affecting the power of $\log X$. This result suggests that the random group with local data $\mu_K^{\rm MB}$ would be a better fit for counting the latter extensions.

\begin{proof}
Any element $(G,\phi)\in {\rm proGrp}(K)\setminus {\rm proGrp}_{\neg L}(K)$ has a morphism to $(\Gal(E/K),\phi_{E/K})$ for some $E\le L$, $E\ne K$, and as the conjugates of $\phi_{E/K}(G_{K_p})$ generate $\Gal(E/K)$ it must be that the morphism is an epimorphism. Thus, it is given by some surjective group homomorphism $\pi:G\to \Gal(E/K)$ for which $\pi\phi_p=\phi_{E/K,p}$.

Thus, it follows that
\begin{align*}
\int_{{\rm proGrp}(K)\setminus {\rm proGrp}_{\neg L}(K)} d\mu_{K}^{\rm MB} &\le \int_{{\rm proGrp}(K)\setminus {\rm proGrp}_{\neg L}(K)} \max_{\substack{E\le L\\E\ne K}}\#{\rm Epi}(\mathscr{G},(\Gal(E/K),\phi_{E/K}))d\mu_{K}^{\rm MB}\\
&\le \max_{\substack{E\le L\\E\ne K}}\int_{{\rm proGrp}(K)\setminus {\rm proGrp}_{\neg L}(K)} \#{\rm Epi}(\mathscr{G},(\Gal(E/K),S,\phi_{E/K,S}))d\mu_{K}^{\rm MB}\\
&\le \max_{\substack{E\le L\\E\ne K}}|\Gal(E/K)^{\rm ab}[|\mu(K)|]|^{-1}|\Gal(E/K)|^{-|S\cup P_\infty|+1}
\end{align*}
for any finite set of places $S$. Taking $S\to P_K$ proves the result, noting that $1 < |\Gal(E/K)| < \infty$.
\end{proof}

T\"urkelli posed a correction to Malle's prediction \cite{turkelli2015,alberts2021}, although this correction is a little ad hoc. Essentially, one partitions the count of $G$-extensions $L/K$ by the isomorphism class of $\mu(L)$ and apply the Malle-Bhargava principle to each case. There are only finitely many isomorphism classes that $\mu(L)$ can attain among $G$-extensions for a fixed finite group $G$, so the asymptotic growth rate is given by the maximum growth rate of the finitely many individual cases.

It would be interesting to refine the construction of $\mu_K^{\rm MB}$ in order to construct a new measure incorporating root of unity behavior, in such a way that ${\rm proGrp}_{\neg K(\zeta^e)}(K)$ is forced to be a null set, and see if this refined measure matches T\"urkelli's correction.

\subsection{The Grunwald problem}

The Grunwald problem $(G,K,S)$ is said to have a positive solution if the restriction map
\[
\Hom(G_K,G) \to \prod_{p\in S} \Hom(G_{K_p},G)
\]
is surjective. In other words, every local restriction for the places in $S$ is realizable in a global extension.

The Grunwald problem immediately generalizes to arbitrary groups with local data, simply by replacing $G_K$ with the group with local data in question $\mathscr{G}$. We can then ask for the probability that the Grunwald problem has a positive solution with respect to $\mu_K^{\rm MB}$.

\begin{theorem}\label{thm:Grunwald}
The event that $\mathscr{G}\in {\rm proGrp}(K)$ has a positive solution to the Grunwald problem $(G,\mathscr{G},S)$ for every $G$ and $S$ has measure $1$ with respect to $\mu_{K}^{\rm MB}$. That is,
\[
\mu_K^{\rm MB}\left(\left\{\mathscr{G}\in {\rm proGrp}(K) \Bigg{|} \substack{\displaystyle\text{for any finite group }G\text{ and finite set of places }S,\\\displaystyle \Hom(\mathscr{G},G) \to \prod_{p\in S} \Hom(G_{K_p},G)\text{ is surjective}}\right\}\right) = 1.
\]
\end{theorem}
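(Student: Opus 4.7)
By countable additivity over the countably many isomorphism classes of triples $(G,S,\psi)$, it suffices to fix a finite group $G$, a finite set of places $S\supseteq P_\infty$, and a tuple $\psi=(\psi_p)_{p\in S}$ of continuous homomorphisms $\psi_p\colon G_{K_p}\to G$, and show that
\[
A_{G,S,\psi}:=\{\mathscr{G}\in{\rm proGrp}(K):\exists\,\pi\in\Hom(\mathscr{G},G)\text{ with }\pi\phi_{\mathscr{G},p}=\psi_p\ \forall p\in S\}
\]
has $\mu_K^{\rm MB}$-measure $1$; after replacing $G$ by $\langle\psi_p(G_{K_p}):p\in S\rangle$ I may assume $\psi$ generates $G$. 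The plan is to filter $A_{G,S,\psi}$ by a sequence of clopen subevents and reduce to a second-moment estimate on the approximating measures $\mu_{K,S'}^{\rm MB}$. For each finite $S'\supseteq S$ let $A^{(S')}\subseteq A_{G,S,\psi}$ be the subevent requiring $\pi$ to be unramified outside $S'$; this depends only on the finite-level quotient of $\mathscr{G}$ at the level generated by the finite objects $(G,S\cup S',\varphi)$ with $\varphi|_S=\psi$, hence is clopen, and $A_{G,S,\psi}=\bigcup_{S'}A^{(S')}$ is increasing, so $\mu_K^{\rm MB}(A_{G,S,\psi})=\lim_{S'\to P_K}\mu_K^{\rm MB}(A^{(S')})$. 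A Haar-random computation along the lines of the proof of Proposition~\ref{prop:muKSmoments} shows that the joint distribution of the random count $T^{(S')}$ of lifts $\tilde\pi\colon F_{K,S''}\to G$ realizing $A^{(S')}$ in $\mathscr{F}_{K,S''}(r)$ is independent of the ambient $S''\supseteq S'$, so weak convergence on the clopen event yields $\mu_K^{\rm MB}(A^{(S')})=\Pr_{\mu_{K,S'}^{\rm MB}}[T^{(S')}\ge 1]$, and it suffices to show this probability tends to $1$.

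Writing $T:=T^{(S')}$, the same moment computation gives
\[
E[T]=\frac{1}{|G^{\rm ab}[|\mu(K)|]|\cdot|G|^{|S|-1}}\prod_{p\in S'\setminus S}h_p^{(G)},\qquad h_p^{(H)}:=\frac{|\Hom(G_{K_p},H)|}{|H|}\ge 1,
\]
which tends to infinity since $h_p^{(G)}>1$ on a positive Chebotarev density of primes (any unramified $p$ with $N(p)\equiv 1\pmod{|G|}$ admits nontrivial tame ramified homomorphisms). By the Paley--Zygmund inequality $\Pr[T\ge 1]\ge E[T]^2/E[T^2]$, so it remains to establish $E[T^2]=(1+o(1))E[T]^2$. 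Grouping pairs $(\tilde\pi_1,\tilde\pi_2)$ of lifts by their joint image $D\le G\times G$ and using that each $\tilde\pi_i$ restricts to the generating tuple $\psi$ on $G_{K_p}$ for $p\in S$, Goursat's lemma forces $D=D_N:=\{(g_1,g_2):g_1\equiv g_2\pmod N\}$ for some $N\normal G$. The diagonal term $N=G$ contributes exactly $E[T]^2$; for each proper $N\normal G$ the trivial bound $\sum_u\ell_p(u)^2\le|\Hom(G_{K_p},N)|\cdot|\Hom(G_{K_p},G)|$ on the at-$p$ count of pairs agreeing modulo $N$, combined with the moment identity $\Pr[\ker\tilde\Pi\supseteq\langle r_i\rangle]=|D_N^{\rm ab}[|\mu(K)|]|^{-1}|D_N|^{-|S'|+1}$, bounds the contribution by a constant $C_{G,S,N}$ times $\prod_{p\in S'\setminus S}(h_p^{(N)}/h_p^{(G)})\cdot E[T]^2$.

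The hard part is showing $\prod_{p\in S'\setminus S}(h_p^{(N)}/h_p^{(G)})\to 0$ for every proper $N\normal G$. Since unramified contributions to $h_p^{(\cdot)}$ equal $1$ for any subgroup, this reduces to showing that the tame ``commutator count'' $|\{(s,t)\in H^2:sts^{-1}=t^{N(p)}\}|/|H|$ grows strictly faster for $H=G$ than for $H=N$ on a positive-density set of primes; by Chebotarev density these counts stratify by the residue class of $N(p)\pmod{|G|}$, and a case-by-case analysis (sweeping the classes always produces at least one where the strict inequality holds for a given proper $N\normal G$) forces $\sum_p\log(h_p^{(N)}/h_p^{(G)})\to-\infty$. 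Summing the resulting subdominant contributions over the finitely many proper $N\normal G$ yields $E[T^2]=(1+o(1))E[T]^2$, and Paley--Zygmund then delivers $\Pr[T\ge 1]\to 1$, completing the proof.
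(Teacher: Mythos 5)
Your overall strategy---reduce by countable additivity to a single $(G,S,\psi)$ with $\psi$ generating $G$, approximate the event by clopen sets, and run a second-moment/Paley--Zygmund argument on $\mu_{K,S'}^{\rm MB}$---is a reasonable and genuinely different route from the paper, which instead deduces the result from Theorem \ref{thm:muK_Malle_general} by exhibiting, for \emph{every} finite group $G$, an admissible invariant (the product of ramified primes, or a two-valued weight function singling out a $\Q$-conjugacy class $\kappa$ with $G\setminus\kappa$ a subgroup) whose hypotheses are automatically satisfied. But your proof has a genuine gap exactly where you flag ``the hard part'': the off-diagonal second-moment estimate. Three specific problems. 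First, the local bound $\max_u \ell_p(u)\le|\Hom(G_{K_p},N)|$ is unjustified: the fibers of $\Hom(G_{K_p},G)\to\Hom(G_{K_p},G/N)$ are not torsors under $\Hom(G_{K_p},N)$ when $G$ is nonabelian, so there is no a priori reason the nontrivial fibers are no larger than the trivial one. Second, even granting that bound, the quantity you must control is $[G:N]\cdot\sum_u\ell_p(u)^2/|\Hom(G_{K_p},G)|^2$ per place (the $[G:N]=|G|^2/|D_N|$ comes from the moment $|D_N|^{-|S'|+1}$ of the joint image), and you have not shown this is $\le 1$ at every place, which is needed before any density argument can force the product to $0$ rather than merely oscillate. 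Third, the final claim that a ``case-by-case analysis'' of residue classes always produces a class where the strict inequality holds for every proper $N\normal G$ is precisely the combinatorial heart of the matter---it is the content of the invariant $\beta$ in Lemma \ref{lem:mixedmoment}(b), whose proof occupies the most technical part of the paper and proceeds by showing that pairs with joint image in $D_N$ force the ramification to avoid the $K$-conjugacy classes of minimal weight meeting $N$, costing a definite power of $\log X$. Asserting it does not establish it.

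If you want to salvage your approach without reproving Lemma \ref{lem:mixedmoment}(b) from scratch, the efficient fix is the paper's: note that convergence in probability of $N(\mathscr{G},\inv_X^{G,S,\Sigma})/c_{\inv}(K,\Sigma)X^{1/a_{\inv}(G)}(\log X)^{b_{\inv}(K,G)-1}$ to $1$ already gives $\mu_K^{\rm MB}(\{N(\mathscr{G},\inv_X^{G,S,\Sigma})\ge 1\})\to 1$, and the union over $X$ of these events is your $A_{G,S,\psi}$; so it suffices to pick, for each $G$, a weight function making $G$ generated by minimal-weight elements (the product of ramified primes ordering always works) and with $\Sigma_p=\{\psi_p\}$ for $p\in S$, then invoke Theorem \ref{thm:muK_Malle_general}(ii) or (iii). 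That choice of invariant is the one idea your write-up is missing.
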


Theorem \ref{thm:Grunwald} is not surprising, as $\mu_K^{\rm MB}$ was constructed from all possible local data. This is, however, distinct from the behavior of the absolute Galois group $G_K$. The Grunwald problem is known to have a negative solution for $(C_8,\Q,\{2\})$, following from the Grunwald-Wang theorem. The source of this failure is again roots of unity (specifically the $8^{\rm th}$ roots of unity for the $C_8$ case), and so it stands to reason that refining the measure to incorporate root of unity behavior might fix this issue as well.

Alternatively, we know the Grunwald problem has a positive solution for any set of finite places $S$ when $G$ is an odd solvable group or when $G$ has a generic extension. This includes odd order abelian groups and symmetric groups, for which we see $\mu_K^{\rm MB}$ predicting growth rates that agree with number field counting results (for fair counting functions). This suggests that similar results might be true for other groups for which the Grunwald problem always has a positive solution, detecting when we might reasonably expect Theorem \ref{thm:prob1intro} to make accurate predictions for number field counting.

\begin{proof}[Proof of Theorem \ref{thm:Grunwald}]
The proof follows from Theorem \ref{thm:muK_Malle_general}. For each $\phi\in \prod_{p\in S} \Hom(G_{K_p},G)$, take $\Sigma = (\Sigma_p)$ defined by
\[
\Sigma_p = \begin{cases}
\{\phi_p\} & p\in S\\
\Hom(G_{K_p},G) & p\not\in S.
\end{cases}
\]
Additionally, take the invariant $\inv$ to be given by a weight function in the following way:
\begin{itemize}
\item If there exists a $\Q$-conjugacy class (that is, a minimal set closed under conjugation and invertible powers) $\kappa\subseteq G$ for which $G\setminus \kappa$ is a subgroup of $G$, take the weight function
\[
w(g) = \begin{cases}
0 & g=1\\
1 & g\in \kappa\\
2 & g\in G\setminus (\kappa \cup \{1\}).
\end{cases}
\]
\item Otherwise, take the weight function
\[
w(g) = \begin{cases}
0 & g=1\\
1 & g\ne 1
\end{cases}
\]
giving the product of ramified primes ordering.
\end{itemize}
We claim that Theorem \ref{thm:muK_Malle_general}(iii) applies to each group under these orderings, proving that the $(G,\mathscr{G},S)$ Grunwald problem has a positive solution with probability $1$. Applying countable additivity to the countably many finite groups $G$ and finite sets of places $S$ then concludes the proof.

If there does not exist a $\Q$-conjugacy class (that is, a minimal set closed under conjugation and invertible powers) $\kappa\subseteq G$ for which $G\setminus \kappa$ is a subgroup of $G$, then every proper normal subgroup $N\normal G$ necessarily has $G\setminus N$ containing at least two $K$-conjugacy classes. Thus, the conditions of Theorem \ref{thm:muK_Malle_general}(iii) are automatically satisfied.

Alternatively, suppose such a $\kappa$ exists. We claim that no proper normal subgroup of $G$ can contain a minimal weight element. The conditions of Theorem \ref{thm:muK_Malle_general}(iii) would then be automatically satisfied. Consider that $G\setminus \kappa$ being a proper subgroup necessarily implies $|G\setminus \kappa| \le \frac{|G|}{2}$. Any normal subgroup $N\normal G$ that contains a minimal index element has nontrivial intersection with $\kappa$, and therefore contains $\kappa$ by virtue of being closed under conjugation and invertible powers. $N$ must also contain $1$, so we bound
\begin{align*}
|N| &\ge |\kappa| + 1\\
&=|G| - |G\setminus \kappa| + 1\\
&\ge |G| - \frac{|G|}{2} + 1\\
&>\frac{|G|}{2}.
\end{align*}
Thus, $[G:N] < 2$, which necessarily implies $N = G$ is not a proper subgroup.
\end{proof}

\subsection{Fair counting functions}

Wood proves that ordering abelian extensions by a fair counting function is sufficient for the leading constant to be given by a convergent Euler product \cite{wood2009}. This leading constant for counting abelian $G$-extensions of a number field $K$ agrees with Theorem \ref{thm:muK_Malle_general} if $G$ and $K$ are such that every Grunwald problem $(G,K,S)$ has a positive solution. Thus, away from the Grunwald problem obstruction, fairness of the ordering is sufficient for the growth rate for abelian groups in Theorem \ref{thm:muK_Malle_general} to agree with the true growth rate of Malle's counting function for abelian groups.

It natural to ask if the condition of fairness given by Wood can be weakened. Theorem \ref{thm:muK_Malle_general} gives an asymptotic growth rate for the counting function correspoinding to an admissible invariant $\inv$ counting surjections into a finite group $G$ in the following circumstance:
\begin{align}\label{eq:min_weight}
\text{$G$ is generated by the minimal weight elements with respect to $\inv$}
\end{align}

It is natural to ask if conditon (\ref{eq:min_weight}) is sufficient for the asymptotic growth rate in Theorem \ref{thm:muK_Malle_general} to agree with the true growth rate of Malle's counting function for counting $G$ extensions over a number field $K$ ordered by $\inv$. If this were true, it would indicate that the fairness obstruction is subsumed by the ``$G$ not generated by minimal index elements'' obstruction.

However, this is known to be false for counting $G$-extensions. Condition (\ref{eq:min_weight}) is not sufficient for the asymptotic growth rate in Theorem \ref{thm:muK_Malle_general} to agree with the true growth rate of Malle's counting function. Wood shared a counter example with the author in a personal correspondence \cite{wood_note_fair}: Order $\Z/4\Z$-extensions of $\Q(\sqrt{2})$ by the admissible invariant $\inv$ determined by the weight function
\[
w(g) = \begin{cases}
0 & g=0\\
1 & g=1,3\\
2 & g=2.
\end{cases}
\]
It is clear that $\Z/4\Z$ is generated by minimal weight elements, and so satisfies condition (\ref{eq:min_weight}). However, explicit computation (which Wood does following along Wright's original proof of Malle's conjecture for abelian extensions \cite{wright1989,wood_note_fair}) reveals that
\[
\#\{K/\Q(\sqrt{2}) : \Gal(K/\Q(\sqrt{2}))\cong \Z/4\Z, |\inv(K/\Q(\sqrt{2}))| \le X\} \sim c X
\]
for a positive constant $c$ given explicitly by the sum of \emph{two distinct} convergent Euler products. This is certainly different than the growth rate given in Theorem \ref{thm:muK_Malle_general}, which involves only a single convergent Euler product.

Wood's counter example shows that there is some nontrivial condition beyond (\ref{eq:min_weight}) an ordering must satisfy for Theorem \ref{thm:muK_Malle_general} to match the true growth rate of Malle's counting function. Fairness is sufficient in the case of abelian groups avoiding the Grunwald problem obstruction, and it would be an interesting question to determine if fairness is also necessary in such cases.

Moving beyond abelian groups, the notion of a fair counting function defined in \cite{wood2009} does not have an immediate generalization to nonabelian groups. To the author's knowledge there has been no proposed extension of the notion of a fair counting function to nonabelian extensions, other than the widespread recognition that the product of ramified primes ordering should be considered fair. Possibly, the discriminant ordering for $S_n$-extensions in degree $n$ should also be considered fair for similar reasons \cite{bhargava2007}. This makes it difficult to determine for which orderings we should expect the growth rates in Theorem \ref{thm:muK_Malle_general} to agree with the true growth rate of Malle's counting functions (even assuming all other obstructions can be avoided).

Vaguely speaking, the source of the definition of ``fair" counting functions for abelian extensions is similar to that of the Grunwald problem. In the generating Dirichlet series computations in \cite{wood2009}, there occurs a finite sum of Euler products with rightmost pole of maximal order. When the sum cancels out, it can be proven that the entire generating Dirichlet series cancels out giving a negative solution to the corresponding Grunwald problem. When the sum does not cancel out, the individual Euler products may produce different leading constants that must be added together. A fair counting function forces this sum to only include those Euler products which produce the same leading constant. Because of this relationship, the author is optimistic that incorporating the Grunwald problem into the random model will shed light on what a fair counting function should be for nonabelian extensions. In particular, a good next step in this direction is to refine the measure to include root of unity behavior and ask which orderings have nice local probabilities in analogy with Wood's main results in \cite{wood2009}.

\bibliographystyle{alpha}
\bibliography{A_Random_Group_With_Local_Data.bbl}
\end{document}